\renewcommand{\setminus}{{\smallsetminus}}
\newcommand{\bp}{\begin{pmatrix}}
\newcommand{\ep}{\end{pmatrix}}
\newcommand{\be}{\begin{equation}}
\newcommand{\ee}{\end{equation}}
\newcommand{\ol}[1]{\overline{#1}}
\newcommand{\e}{\varepsilon}
\numberwithin{equation}{section}
\theoremstyle{plain}
\newtheorem{theorem}[equation]{Theorem}
\newtheorem{lemma}[equation]{Lemma}
\newtheorem{proposition}[equation]{Proposition}
\theoremstyle{definition}
\newtheorem{remark}[equation]{Remark}
\newtheorem{definition}[equation]{Definition}
\numberwithin{equation}{section}
 \newtheoremstyle{TheoremNum}
        {}{}              
        {\itshape}                      
        {}                              
        {\bfseries}                     
        {.}                             
        { }                             
        {\thmname{#1}\thmnote{ \bfseries #3}}
\theoremstyle{TheoremNum}
\newtheorem{thmn}{Theorem}
\def\Z{\mathbb Z}
\def\R{\mathbb R}
\def\O{\Omega}
\def\wt#1{\widetilde{#1}}
\def\wtr{\rho}
\def\etp{\vartheta}
\def\taup{\tau}
\def\taus{\varsigma}
\def\p{\partial}
\def\vu{\vec{u}}
\def\vw{\vec{w}}
\def\sm{\setminus}
\def\S{\Sigma}
\DeclareMathOperator\Int{Int}
\DeclareMathOperator\codim{codim}
\DeclareMathOperator\cl{cl}
\def\@maketitle{%
  \normalfont\normalsize
  \@adminfootnotes
  \@mkboth{\@nx\shortauthors}{\@nx\shorttitle}%
  \global\topskip42\p@\relax 
  \@settitle
  \ifx\@empty\authors \else \@setauthors \fi
  \ifx\@empty\@dedicatory
  \else
    \vskip.5em
    \baselineskip18\p@
    \vtop{\raggedright{\footnotesize\itshape\@dedicatory\@@par}%
      \global\dimen@i\prevdepth}\prevdepth\dimen@i
  \fi
  \@setabstract
  \normalsize
  \if@titlepage
    \newpage
  \else
    \dimen@34\p@ \advance\dimen@-\baselineskip
    \vskip\dimen@\relax
  \fi
}
\def\@settitle{%
  \vspace*{-20pt}
  \begin{flushleft}%
    \baselineskip14\p@\relax
    \normalfont\bfseries\Large
    \@title
  \end{flushleft}%
}
\def\@setauthors{%
  \begingroup
  \def\thanks{\protect\thanks@warning}%
  \trivlist
  \large \@topsep30\p@\relax
  \advance\@topsep by -\baselineskip
  \item\relax
  \author@andify\authors
  \def\\{\protect\linebreak}%
  \authors
  \ifx\@empty\contribs
  \else
    ,\penalty-3 \space \@setcontribs
    \@closetoccontribs
  \fi
  \normalfont\raggedright
  \@setaddresses
  \endtrivlist
  \endgroup
}
\def\@setaddresses{\par
  \nobreak \begingroup
  \small
  \def\author##1{\nobreak\addvspace\smallskipamount}%
  \def\\{\unskip, \ignorespaces}%
  \interlinepenalty\@M
  \def\address##1##2{\begingroup
    \par\addvspace\bigskipamount\noindent
    \@ifnotempty{##1}{(\ignorespaces##1\unskip) }%
    {\ignorespaces##2}\par\endgroup}%
  \def\curraddr##1##2{\begingroup
    \@ifnotempty{##2}{\nobreak\noindent\curraddrname
      \@ifnotempty{##1}{, \ignorespaces##1\unskip}\/:\space
      ##2\par}\endgroup}%
  \def\email##1##2{\begingroup
    \@ifnotempty{##2}{\nobreak\noindent E-mail address%
      \@ifnotempty{##1}{, \ignorespaces##1\unskip}\/:\space
      \ttfamily##2\par}\endgroup}%
  \def\urladdr##1##2{\begingroup
    \def~{\char`\~}%
    \@ifnotempty{##2}{\nobreak\noindent\urladdrname
      \@ifnotempty{##1}{, \ignorespaces##1\unskip}\/:\space
      \ttfamily##2\par}\endgroup}%
  \addresses
  \endgroup
  \global\let\addresses=\@empty
}
\def\@setabstracta{%
    \ifvoid\abstractbox
  \else
    \skip@25\p@ \advance\skip@-\lastskip
    \advance\skip@-\baselineskip \vskip\skip@
    \box\abstractbox
    \prevdepth\z@ 
    \vskip-10pt
  \fi
}
\renewenvironment{abstract}{%
  \ifx\maketitle\relax
    \ClassWarning{\@classname}{Abstract should precede
      \protect\maketitle\space in AMS document classes; reported}%
  \fi
  \global\setbox\abstractbox=\vtop \bgroup
    \normalfont\small
    \list{}{\labelwidth\z@
      \leftmargin0pc \rightmargin\leftmargin
      \listparindent\normalparindent \itemindent\z@
      \parsep\z@ \@plus\p@
      
    }%
    \item[\hskip\labelsep\bfseries\abstractname.]%
}{%
  \endlist\egroup
  \ifx\@setabstract\relax \@setabstracta \fi
}
\def\section{\@startsection{section}{1}%
  \z@{-1.2\linespacing\@plus-.5\linespacing}{.8\linespacing}%
  {\normalfont\bfseries\large}}
\def\subsection{\@startsection{subsection}{2}%
  \z@{-.8\linespacing\@plus-.3\linespacing}{.3\linespacing\@plus.2\linespacing}%
  {\normalfont\bfseries}}
\def\subsubsection{\@startsection{subsection}{3}%
  \z@{.7\linespacing\@plus.2\linespacing}{-1.5ex}%
  {\normalfont\itshape}}
\def\@secnumfont{\bfseries}
\def\to{\mathchoice{\longrightarrow}{\rightarrow}{\rightarrow}{\rightarrow}}
\newcommand{\shortxra}[2][]{\ext@arrow 0359\rightarrowfill@{#1}{#2}}
\def\longrightarrowfill@{\arrowfill@\relbar\relbar\longrightarrow}
\newcommand{\longxra}[2][]{\ext@arrow 0359\longrightarrowfill@{#1}{#2}}
\def\Nopagebreak{\@nobreaktrue\nopagebreak}
\begin{document}

\title[Embedded Morse theory]{Embedded Morse theory and relative splitting of cobordisms of manifolds}
\author{Maciej Borodzik}
\address{Institute of Mathematics, University of Warsaw, ul. Banacha 2,
02-097 Warsaw, Poland}
\email{mcboro@mimuw.edu.pl}

\author{Mark Powell}
\address{Department of Mathematics, Indiana University, Bloomington, IN, 47405, USA}
\email{macp@indiana.edu}


\def\subjclassname{\textup{2010} Mathematics Subject Classification}
\expandafter\let\csname subjclassname@1991\endcsname=\subjclassname
\expandafter\let\csname subjclassname@2000\endcsname=\subjclassname
\subjclass{%
57R40, 57R70,
  57Q60. 
}
\keywords{embedded Morse theory, manifold with boundary, cobordism, critical points}

\begin{abstract}
We prove that an embedded cobordism between manifolds with boundary can be split into a sequence of right product and
left product cobordisms, if the codimension of the embedding is at least two.  This is a topological counterpart of the algebraic splitting theorem for embedded cobordisms of the first author, A.~N\'emethi and A.~Ranicki. In the codimension one case, we provide a slightly weaker statement.

We also give proofs of rearrangement and cancellation theorems for handles of embedded submanifolds with boundary.
\end{abstract}
\maketitle

\section{Introduction}

We investigate the Morse theory of embedded cobordisms of manifolds with boundary.  An embedded cobordism $(Z,\O)$ is a cobordism $\O$ between two manifolds $\S_0$ and $\S_1$ in $Z \times [0,1]$, where $\S_i = \O \cap (Z \times \{i\})$ for $i=0,1$.  Both $\S_0$ and $\S_1$ can have nonempty boundary, and $\partial \O \sm (\S_0 \cup \S_1)$ can also be nonempty.  By a small perturbation it can be arranged that the projection map $F \colon Z \times [0,1] \to [0,1]$ restricts to a Morse function $f$ on $\O$.

An instance of an embedded cobordism is when there are embeddings of two closed $(n-2)$-dimensional manifolds $N_0$ and $N_1$ into $Z=S^{n}$.
That is,  $N_0$ and $N_1$ are two non-spherical
links in $S^{n}$, there is a cobordism between the two links and $\S_0,\S_1$ are Seifert surfaces for $N_0$ and $N_1$ respectively. A Pontrjagin--Thom
construction guarantees the existence of $\O$ with the properties above.  Morse theory can be used to study the relation between
Seifert forms associated with $\S_0$ and $\S_1$; see \cite{BNR2}.

Morse theory for manifolds with boundary was studied in the 1970s, by~\cite{Braess} and~\cite{Hajduk} independently.  Recently, it was rediscovered by \cite{KM}
in the context of Floer theory, and since then many articles about Morse theory for manifolds with boundary have appeared; see for instance
\cite{Lau, Blo, BNR1}. A recent application of it is the rapidly developing theory of bordered Heegaard Floer homology; compare \cite{Lipshitz}.

Morse functions on manifolds with boundary have three types of critical point: interior, boundary stable and boundary unstable.  A boundary critical point is stable or unstable according to whether the ascending or descending submanifold of the critical point lies in the boundary (see Definition~\ref{def:stable-unstabl} for more details).

The main result of this paper is the following theorem which shows that the critical points of an embedded cobordism of manifolds with boundary can be split i.e.\ pushed to the boundary, where they become two boundary critical points, when the codimension is at least two.  Thus the cobordism can be expressed as a cobordism with only boundary critical points. This theorem is the topological counterpart of \cite[Main Theorem 1]{BNR2}.
As in \cite{BNR1}, boundary stable critical points correspond to the addition of left half-handles and boundary unstable critical points correspond to the addition of right half-handles.

\begin{thmn}[\ref{thm:main1}\textmd{\,(Global Handle Splitting Theorem)}]
Let $(Z,\O)$ be an embedded cobordism such that $\O\subset Z\times[0,1]$ has codimension 2 or more. Suppose that $\O$, $\S_0$ and $\S_1$ have
no closed connected components. Then there exists a map $F\colon Z\times[0,1]\to [0,1]$, which is homotopic through submersions
(see Definition~\ref{def:regularhomotopy})
to the projection onto the second factor, such that $\O$ can be expressed as a union:
\[\O=\O_{-1/2}\cup \O_0\cup \O_{1/2}\cup \O_1\cup \O_{3/2}\cup\dots\cup \O_{n+1/2}\cup \O_{n+1},\]
where $\O_i=\O\cap F^{-1}([(2i+1)/(2n+4),(2i+2)/(2n+4)])$ and
\begin{itemize}
\item[$\bullet$] $\O_{-1/2}$ is a cobordism given by a sequence of index $0$ handle attachments;
\item[$\bullet$] if $i\in\{0,\dots,n\}$, then $\O_i$ is a right product cobordism given by a sequence of elementary index $i$ right product cobordisms;
\item[$\bullet$] if $i+1/2\in \{1,\dots,n+1\}$, then $\O_i$ is a left product cobordism given by a sequence of elementary index $i+1/2$ left product cobordisms;
\item[$\bullet$] $\O_{n+1}$ is a cobordism given by a sequence of index $n+1$ handle attachments.
\end{itemize}
\end{thmn}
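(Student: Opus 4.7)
The plan is to start with a generic perturbation of $F$, arrange the interior critical points of $f=F|_\O$ to be pushed into the boundary of $\O$ in pairs of boundary critical points, and then rearrange everything by index using the handle rearrangement machinery of the paper.

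\textbf{Step 1 (Generic Morse position).} By a small perturbation, $F$ restricts on $\O$ to a Morse function in the sense of manifolds with boundary: each critical point is either interior, boundary-stable, or boundary-unstable, all are nondegenerate, and critical values are distinct. Simultaneously arrange that the restriction of $F$ to $\p \O \sm (\S_0\cup\S_1)$ is also Morse, with critical values disjoint from those on $\Int \O$.

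\textbf{Step 2 (Peeling off index $0$ and $n+1$ handles).} Using the hypothesis that $\O$, $\S_0$, $\S_1$ contain no closed components, all interior index $0$ critical points can be isotoped (by reordering in $[0,1]$) to occur before any other critical point, producing the initial $\O_{-1/2}$; interior index $n+1$ critical points are pushed to the top to give $\O_{n+1}$. The absence of closed components is what prevents these extremal handles from creating $\S^k$-like components that cannot be connected to $\S_0$ or $\S_1$.

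\textbf{Step 3 (Splitting interior critical points onto the boundary).} This is the heart of the argument and the step where codimension $\geq 2$ is used in an essential way. For each remaining interior critical point $p$ of index $k$ with $1\le k\le n$, I would construct a compactly supported homotopy of $F$ through submersions that drags $p$ into $\p\O$, where it breaks into a boundary-unstable critical point of index $k$ (contributing a right half-handle) and a boundary-stable critical point of index $k+1/2$ (contributing a left half-handle). The local model is a standard normal form: near $p$ the function looks like a sum of squares, and one uses the normal $(\geq 2)$-dimensional directions in $Z\times[0,1]$ to isotope $\O$ so that $p$ crosses $\p\O$ transversally, producing the two boundary critical points. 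Having at least two normal dimensions is what permits this ambient isotopy to be performed without forcing self-intersections of $\O$ and while keeping $F$ a submersion on $Z\times[0,1]$ throughout. This is the topological analogue of the algebraic splitting in \cite[Main Theorem~1]{BNR2}.

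\textbf{Step 4 (Rearrangement by index).} After Step 3 every critical point of $f$ lying strictly between levels $1/(2n+4)$ and $(2n+3)/(2n+4)$ is either boundary-stable or boundary-unstable. Using the embedded rearrangement theorem (the ``rearrangement theorem for handles of embedded submanifolds with boundary'' advertised in the abstract and proved earlier in the paper), the critical values can be shuffled so that all right half-handles of index $i$ occur before all left half-handles of index $i+1/2$, which in turn precede all right half-handles of index $i+1$, and so on for $i=0,\dots,n$. Choosing the level sets $F^{-1}((2i+1)/(2n+4))$ and $F^{-1}((2i+2)/(2n+4))$ to separate these groups realises the decomposition in the statement, with $\O_i$ an index $i$ right product cobordism and $\O_{i+1/2}$ an index $i+1/2$ left product cobordism.

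The main obstacle is Step 3: giving a clean local model for the splitting homotopy and verifying that it can be performed through submersions. The required control in the normal bundle, and the compatibility with the boundary decomposition into stable/unstable strata, is where the codimension hypothesis has to be used carefully, and it is also the point where the proof must differ substantively from the classical (non-embedded) Morse theory of manifolds with boundary treated in \cite{Braess, Hajduk, BNR1}.
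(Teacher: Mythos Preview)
Your outline has the right high-level shape, but there are two genuine gaps, and they are connected.

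\textbf{First, the splitting step needs a hypothesis you never verify.} The Elementary Handle Splitting Theorem (Theorem~\ref{thm:embedded}) requires that the interior critical point $z$ can be joined to $Y=\partial\O\setminus(\S_0\cup\S_1)$ by a path lying \emph{entirely in the level set} $f^{-1}(f(z))$. This is not automatic: if the level set has a closed connected component containing $z$, it fails. The paper spends Proposition~\ref{prop:4.11} and Lemmata~\ref{lem:4.13}--\ref{lem:new} establishing exactly this, and those arguments only go through once $f$ has first been rearranged into the ``technically still acceptable'' configuration. So the order in the paper is: rearrange first (into TSA form), then verify level-set connectivity, then split. Your order (split first, rearrange later) leaves you with no mechanism to guarantee the path exists. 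The no-closed-components hypothesis on $\O,\S_0,\S_1$ is used precisely here, not in your Step~2.

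\textbf{Second, you have misplaced the role of the codimension hypothesis.} The splitting itself (Theorem~\ref{thm:embedded}) works in \emph{any} positive codimension; it is a modification of $F$ along the path $\gamma$ using the cubic model $y^3-yx^2+y+\tfrac12+\vu^2$, and needs only one normal coordinate $w_r$ with $\partial F/\partial w_r\neq 0$ at $z$. There is no ambient isotopy of $\O$ through $Z\times[0,1]$ and no issue of self-intersection. Where codimension $\geq 2$ actually enters is in the \emph{rearrangement} (Theorem~\ref{thm:egrt}): one needs the membranes $M^u(z)$ and $M^s(w)$ of same-index critical points to be generically disjoint, and Proposition~\ref{prop:ems} gives this only when $m\geq n+2$. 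Without that, you cannot put $f$ into TSA form, and hence cannot run Proposition~\ref{prop:4.11}. (Also, a minor correction: an interior index~$k$ point splits into a boundary stable and a boundary unstable point \emph{both of index $k$}; the half-integer labels in the statement index the pieces $\O_i$, not the Morse indices.)
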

We refer to Definition~\ref{def:elementaryindex} for a simple explanation of elementary index $i$ right/left product cobordisms. A more detailed
description is given in \cite[Section 2]{BNR1}, or \cite{Hajduk,Blo}.

The case of codimension one is stated in Theorem~\ref{prop:codim-one-splitting}.  Due to problems with rearrangement of handles in codimension one, the results in codimension one are slightly weaker, for example we were unable to guarantee that index~$1$ and~$n$ handles split (see Proposition~\ref{prop:codim1-4.11}). Nevertheless the flavour of the result is similar to the result in higher codimensions stated above.

\begin{remark}
Note that Theorem~\ref{thm:main1} is not just a corollary of the rearrangement theorem (Theorem~\ref{thm:egrt} below) and the embedded analogue of the Thom-Milnor theorem that $\O$ can be expressed as a union $\O_1'\cup\ldots\O_k'$,
where $\O_k'$ is a cobordism corresponding to a single critical point (interior or boundary).

The proof of Theorem~\ref{thm:main1}
is more difficult, due to the requirement that all the interior critical points of non-extremal indices are absent.  Being able to push interior critical points of non-extremal indices to the boundary is important when one
wants to compare the Seifert forms as in \cite{BNR2}. Further explanation is given after the statement of Theorem~\ref{prop:codim-one-splitting}.
\end{remark}

We also give results detailing when critical points of Morse functions on embedded manifolds with boundary can be rearranged and cancelled.  These folklore results have been stated in the literature in the case of embedded manifolds with empty boundary, but we could not find detailed proofs.

The statements and proofs are new in the case of embedded manifolds with boundary.  The rough idea is to adapt arguments of Milnor for the absolute case of closed manifolds, from \cite{Mi1, Mi2}, and to understand \emph{ascending and descending membranes} (Definition~\ref{def:membrane}), which appeared in the papers of B.~Perron~\cite{Pe} and R.~Sharpe~\cite{Sh}.  Intersections involving these membranes can obstruct the rearrangement and cancellation of critical points of Morse functions on embedded manifolds, in cases when the operation could be performed in the absolute setting.

In order to state our rearrangement theorem we use the following definition.

\begin{definition}
  A \emph{configuration} $\Xi$ of the critical points of an embedded cobordism $(Z,\O)$ is an assignment of a value $\Xi(z_i) \in (0,1)$ to each critical point $z_i$ of~$f$.

  An \emph{admissible configuration} is a configuration satisfying the following conditions:
\begin{itemize}
\item[(A1)] if $z,w$ are critical points with indices $k$, $l$ with $k<l$, then $\Xi(z)<\Xi(w)$;
\item[(A2)] if $z,w$ have the same index $k$ and if $z$ is boundary stable and $w$ is boundary unstable, then $\Xi(z)<\Xi(w)$.
\end{itemize}
\end{definition}

Next we state our rearrangement theorem, which in the case that the codimension is at least two, essentially says that any admissible configuration can be realised.

\begin{thmn}[\,\ref{thm:egrt}\textmd{\,(Global Rearrangement Theorem)}]
Suppose $(Z,\O)$ is a cobordism.  Given an admissible configuration $\Xi$ of the critical points of $f$,
if $\codim(\O\subset Z\times[0,1])\geqslant 2$, there exists a function $G\colon Z\times[0,1]\to[0,1]$ without critical points, homotopic through submersions to $F$ (Definition~\ref{def:regularhomotopy}), which restricts
to a Morse function $g\colon\O\to[0,1]$, such that $g$ restricted to the critical points agrees with~$\Xi$ (the type and index of each critical point is preserved).

\end{thmn}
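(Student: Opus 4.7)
The plan is to deduce the theorem from an elementary pairwise swap, applied iteratively in bubble-sort fashion. I would order the critical points of $f$ as $z_1,\ldots,z_N$ by their current $f$-values and compare them with their ordering by the target values $\Xi(z_i)$. The two orderings differ by a permutation, and any permutation factors as a product of adjacent transpositions; each such transposition corresponds to swapping a pair $(z,w)$ of critical points that are consecutive in the current $f$-ordering but in the reverse of their $\Xi$-order. The admissibility conditions (A1) and (A2) guarantee that in every such pair either $\ind(z) \geqslant \ind(w)$, or else $\ind(z) = \ind(w)$ with $z$ boundary unstable and $w$ boundary stable --- precisely the configurations in which a local rearrangement ought to be performable.

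For the elementary swap itself I would adapt Milnor's argument from the closed absolute case, working with a gradient-like vector field $\xi$ for $f$ together with the ascending and descending membranes of $z$ and $w$ (Definition~\ref{def:membrane}, as alluded to in the introduction). The condition enabling a swap of $f(z)$ and $f(w)$ is that the descending membrane of the $f$-higher critical point be disjoint from the ascending membrane of the $f$-lower one, both inside the interior of $\O$ and along $\p\O$. Once this disjointness is in place, one constructs a compactly supported modification of $f$ supported in a neighbourhood of the union of the relevant gradient trajectories, which interchanges the two critical values while leaving the Morse data of all other critical points, and the behaviour on $\p\O$, unchanged.

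The central technical step is to achieve the required membrane disjointness. With the indices and boundary types constrained as in (A1)--(A2), a dimension count on an intermediate regular level set shows that the descending membrane of $w$ and the ascending membrane of $z$ together have dimension at most $\dim\O-1$ there, so a generic $C^\infty$-small perturbation of $\xi$ (supported away from the critical points and preserving the stable/unstable stratification of $\p\O$) achieves transverse disjointness. I expect this step to be the main obstacle: one must simultaneously respect the decomposition of $\p\O$ into boundary stable and boundary unstable loci, avoid spoiling the Morse property, and ensure that no new trajectories arise between pairs of critical points outside the one being swapped.

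Finally, each local modification of $f$ on $\O$ must be lifted to a homotopy of $F$ through submersions on $Z\times[0,1]$, as required by Definition~\ref{def:regularhomotopy}. Because $\codim(\O \subset Z\times[0,1])\geqslant 2$, a tubular neighbourhood of $\O$ exists; the compactly supported modification on $\O$ extends to such a neighbourhood by interpolating in the normal fibre coordinates against a smooth cut-off, arranged so that the gradient of the ambient function is never annihilated, i.e.\ no new ambient critical points are created and the submersion property is preserved throughout the homotopy. Iterating over the sequence of adjacent transpositions then produces the desired function $G$.
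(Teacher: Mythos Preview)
Your overall architecture---bubble-sort via adjacent swaps, each swap enabled by an Elementary Rearrangement Theorem whose hypothesis is membrane disjointness, with that disjointness achieved by a dimension count plus genericity---is exactly the paper's approach (Theorem~\ref{thm:ert} combined with Proposition~\ref{prop:ems}). But you have misplaced the role of the hypothesis $\codim(\O\subset Z\times[0,1])\geqslant 2$, and this is a real gap.

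You write that the descending and ascending membranes ``together have dimension at most $\dim\O-1$'' on an intermediate level set, and you invoke the codimension only later, to obtain a tubular neighbourhood. Both of these are wrong. Tubular neighbourhoods exist in codimension one; that is not what the hypothesis buys you. The membranes $M^u(z)$ and $M^s(w)$ live in $Z\times[0,1]$, not in $\O$; by Table~\ref{tab:one} they have dimensions $n+2-k$ and $l+1$, and for generic disjointness in the $(m+1)$-dimensional ambient one needs $(n+2-k)+(l+1)\leqslant m+1$, i.e.\ $l-k\leqslant m-n-2$. When $l\leqslant k$ (the case forced by (A1)--(A2) in a swap) this becomes $0\leqslant m-n-2$, which is precisely the codimension-two hypothesis. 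Your bound ``$\dim\O-1$'' does not see this: it would hold in codimension one as well, yet the theorem fails there (cf.\ Theorem~\ref{prop:egrt1}). The paper handles this correctly in Proposition~\ref{prop:ems}, which checks three separate transversality conditions (membranes in $Z\times[0,1]\sm\O$, stable/unstable manifolds in $\O\sm Y$, and in $Y$).

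A second, smaller point: the paper does not lift a modification of $f$ to $F$ by cutting off in a tubular neighbourhood. Instead it works ambiently from the start, using an embedded gradient-like vector field $\xi$ on $Z\times[0,1]$ (Definition~\ref{def:eglv}) and defining $G(x)=\Psi(F(x),\mu(x))$ directly, where $\mu$ is constant on trajectories of $\xi$. This automatically gives $\xi\cdot G>0$ away from the critical points of $f$, so $G$ is a submersion and the homotopy through submersions is immediate. Your cut-off extension could presumably be made to work, but the verification that no ambient critical points are created is not as automatic as you suggest.
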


Results on rearrangement in codimension one are given in Theorem~\ref{prop:egrt1}.

The cancellation theorem is as follows; here there are no codimension restrictions.

\begin{thmn}[\,\ref{lem:eec}\textmd{\,(Elementary Cancellation Theorem)}]
Let $(Z,\O)$ be a cobordism. Let $z$ and $w$ be critical points of $f$ of indices $k$ and $k+1$, of the same type (i.e. either both interior,
or both boundary stable, or both boundary unstable).
Suppose that $\xi$ is an embedded gradient-like vector field, which is Morse--Smale (Definition~\ref{def:emsc}), and that there exists
a \emph{single trajectory} $\gamma$ of $\xi$ connecting $z$ with $w$. If $z$, $w$ are interior critical points, we require that $\gamma\subset\O$.  If $z$, $w$ are boundary critical points, then we require that $\gamma\subset Y$. Furthermore, suppose that there are no broken trajectories between $z$ and $w$.

Then, for any neighbourhood $U$ of $\gamma$, there
exists a vector field $\xi'$ on $Z\times[0,1]$, agreeing with $\xi$ away from $U$, non-vanishing on $U$, and a function
$F'\colon Z\times[0,1]\to[0,1]$ such that $f'=F'|_\O$ has the same critical points as $f$ with the exception of $z$ and $w$,
which are regular points of $f'$, and such that $\xi'$ is an embedded gradient-like vector field with respect to $F'$ (Definition~\ref{def:eglv}).
\end{thmn}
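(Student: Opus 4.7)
The approach is to adapt Milnor's First Cancellation Theorem (see~\cite{Mi2}) to the embedded setting with possibly boundary-critical points. The modification is local in a neighbourhood of $\gamma$, so the strategy is to choose coordinates in which the gradient-like vector field takes a standard form, apply Milnor's explicit modification there, and then extend the modification from $\O$ (or from $Y$, in the boundary-critical case) to all of $Z\times[0,1]$ via a compatible tubular neighbourhood.

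First, I would shrink $U$ to a smaller open neighbourhood $U_0 \subset U$ of $\gamma$ with $\overline{U_0} \subset U$, and set $V_0 = U_0 \cap \O$ in the interior case or $V_0 = U_0 \cap Y$ in the boundary case. Because $\xi$ is Morse--Smale and $\gamma$ is the only (broken or unbroken) trajectory between $z$ and $w$, the pieces of the ascending manifold of $z$ and the descending manifold of $w$ near $\gamma$ assemble into a standard product neighbourhood in $\O$ (respectively $Y$), in which $\xi$ can be taken to be the Milnor model for a pair of consecutive-index critical points. Milnor's explicit construction then produces a new vector field $\widetilde{\xi}$ on $\O$ (respectively on $Y$), equal to $\xi|_\O$ outside $V_0$, nowhere vanishing on $V_0$, and gradient-like for a function $\widetilde{f}$ that agrees with $f$ outside $V_0$ and has no critical points in $V_0$. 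In the boundary case, one must afterwards extend $\widetilde{\xi}$ and $\widetilde{f}$ from $Y$ into a collar neighbourhood in $\O$, preserving the boundary stable/unstable tangency type of any neighbouring critical points of~$f$.

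To complete the proof I would extend from $\O$ to $Z \times [0,1]$ via a tubular neighbourhood $N$ of $\O$ chosen compatibly with the stratification $Y \subset \partial\O \subset Z \times [0,1]$. Using an identification $N \cong \O \times D^{c}$ with $c = \codim(\O \subset Z\times[0,1])$, together with the fact that $F$ is a submersion along $\O$, the function $F$ can be written (after a smooth change of coordinates on $N$) as $f\circ\pi_\O$ plus a normal correction. Replacing $f$ by $\widetilde{f}$ over the tubular neighbourhood of $V_0$, and using a smooth cutoff on a collar to patch back to $F$ outside $U$, yields the desired function $F'$ on $Z\times[0,1]$. The vector field $\xi'$ is obtained analogously by extending $\widetilde{\xi}$ across $N$ and patching to $\xi$ outside $U$; a direct computation then confirms $dF'(\xi') > 0$ away from the new critical points, so $\xi'$ is embedded gradient-like with respect to~$F'$.

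The main obstacle is the boundary-critical case. The modification first performed inside $Y$ has to be extended to $\O$ in a way that preserves the boundary tangency conditions of all nearby critical points of $f$, and then further extended to $Z\times[0,1]$ so that the new $\xi'$ remains gradient-like for $F'$ and so that no spurious interior critical points of $F'|_\O$ appear inside $U$. Choosing the tubular neighbourhood compatibly with the boundary stratification, together with carefully designed bump functions on the collars, is what makes the patching succeed globally while preserving the embedded gradient-like property.
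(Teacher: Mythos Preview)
Your overall plan---carry out Milnor's cancellation on $\O$ (or on $Y$) and then push the resulting $\widetilde f$ and $\widetilde\xi$ out to $Z\times[0,1]$ via a tubular neighbourhood---is not the route the paper takes, and the extension step as you describe it has a genuine gap. Writing $F$ on a tubular neighbourhood as $f\circ\pi_\O$ plus a ``normal correction'' and then replacing $f$ by $\widetilde f$ does not obviously yield a submersion. The Embedded Morse Lemma supplies the form $F=f+y_1$ only on small charts around $z$ and $w$, not on a single coherent neighbourhood of all of~$\gamma$; and once you introduce cutoffs to patch $F'$ back to $F$ outside $U$ there is nothing preventing new critical points of $F'$, or regions where $\xi'\cdot F'\leqslant 0$, from appearing in the transition zone. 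A convex combination of two submersions need not be a submersion, and a convex combination of two gradient-like vector fields for two different functions need not be gradient-like for anything. The sentence ``a direct computation then confirms $dF'(\xi')>0$'' is hiding precisely the hard part of the argument.

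The paper sidesteps this by working in ambient coordinates from the outset and by never attempting to patch the function. An embedded analogue of Milnor's Assertion~6 (proved using the Embedded Isotopy Lemma~\ref{lem:eil}) furnishes a single coordinate chart on a full neighbourhood of $\gamma$ in $Z\times[0,1]$, normal $y$-directions included, in which $\xi$ takes the explicit form~\eqref{eq:formofxi}. The modification to $\xi'$ is then the explicit subtraction of $(C+1)\psi\,\partial_{x_1}$, supported in a small set $U_2$ on which a ``no-return'' lemma (Milnor's Assertion~1, adapted to allow other critical points between $z$ and $w$ and relying on the no-broken-trajectories hypothesis) has already been established. Rather than modifying $F$, the paper checks that $\xi'$ is an almost gradient-like vector field with no circular broken trajectories and invokes the Vector Field Integration Lemma~\ref{lem:integrate} to manufacture $F'$ directly from~$\xi'$. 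That lemma---building $F'$ from the modified vector field instead of patching $F$---is the key technical ingredient missing from your outline.
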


\subsection*{Organisation of the paper}

Section~\ref{section:review-morse-theory-boundary} reviews Morse theory for manifolds with boundary and gives the definitions and terminology that we will use throughout the paper.  The Embedded Morse Lemma~\ref{lem:eml} is also proven.

 Section~\ref{section:embedded-gl-vector-fields} introduces the notion of embedded gradient-like vector fields, defines membranes, and develops embedded Morse theory.  Section~\ref{section:recovering-morse-function} shows how to recover a Morse function from an embedded gradient-like vector field.  The Embedded Isotopy Lemma~\ref{lem:eil} is proved in Section~\ref{section:isotopy-lemma}.

Section~\ref{section:rearrangement} deals with rearrangement of critical points.  Section~\ref{section:elem-rearrangement-thm} proves the Elementary Rearrangement Theorem~\ref{thm:ert}, Section~\ref{section:morse-smale-condition} looks at the dimensions of transverse intersections of membranes, and Section~\ref{section:global-rearrangement} gives the Global Rearrangement Theorem~\ref{thm:egrt}.

Section~\ref{section:cancellation} looks at cancellation of critical points; the Elementary Cancellation Theorem~\ref{lem:eec} is proven in Section~\ref{section:elem-cancel-thm}.

Section~\ref{section:splitting} proves our main result, concerning the pushing of interior critical points to the boundary, splitting a handle into two half-handles.  The Elementary Handle Splitting Theorem~\ref{thm:embedded} is proven in Section~\ref{section:elem-splitting} and the Global Handle Splitting Theorem~\ref{thm:main1} is proven in Section~\ref{section:global-handle-splitting}.

\subsection*{Acknowledgements}
The first author wants to thank Indiana University for hospitality and the Fulbright Foundation for a research grant that made this visit possible.

Part of this work was written while the second author was a
visitor at the Max Planck Institute for Mathematics in Bonn, which he would like to thank for its hospitality.  The second author also gratefully acknowledges an AMS-Simons travel grant which aided his travel to Bonn.

\section{Review of Morse theory for manifolds with boundary}\label{section:review-morse-theory-boundary}

\subsection{The absolute case}
We begin by recalling various definitions from \cite{BNR1} for absolute cobordisms, i.e.\  we are not considering
cobordism of embedded submanifolds yet.

\begin{definition}
An $(n+1)$-dimensional \emph{cobordism of manifolds with boundary} is a triple $(\O,\S_0,\S_1)$, where $\Omega$ is a manifold
with boundary, $\dim\O=n+1$, $\S_0$ and $\S_1$ are codimension 0 submanifolds of $\p\O$ and
\begin{itemize}
\item[$\bullet$] the boundary of $\O$ decomposes as a union $\p\O=\S_0\cup Y\cup\S_1$ for some $n$-dimensional manifold with boundary $Y$;
\item[$\bullet$] we have $\p\S_0=\S_0\cap Y=:N_0$, $\p\S_1=\S_1\cap Y=:N_1$ and $\p Y=N_0\cup N_1$. In other words, $Y$ is a cobordism between
$N_0$ and $N_1$.
\end{itemize}
\end{definition}

A schematic diagram is shown in Figure~\ref{figure:cylinder}.

\begin{figure}[H]
      \labellist
      \small\hair 0mm
      \pinlabel {$\O$} at 240 110
      \pinlabel {$\S_0$} at 40 110
      \pinlabel {$\S_1$} at 440 110
      \pinlabel {$N_0$} at 0 15
      \pinlabel {$N_1$} at 480 15
      \pinlabel {$Y$} at  240 15
      \endlabellist
      \begin{center}
      \includegraphics[scale=.45]{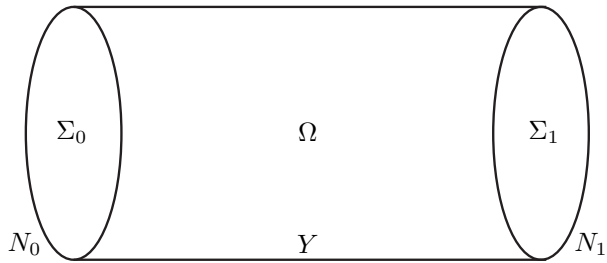}
      \end{center}
      \caption{An embedded cobordism.}
      \label{figure:cylinder}
\end{figure}

In many cases it is useful to assume that $\O$ is manifold with corners, i.e.\ near $N_0\cup N_1$ it is locally modelled on $\R^2_{\geqslant 0}\times\R^{n-1}$.
Thus, tubular neighbourhoods of $Y$, $\S_0$ and $\S_1$ are diffeomorphic to $Y\times [0,1)$, $\S_0\times[0,1)$
and $\S_1\times[0,1)$ respectively.

\begin{definition}
A cobordism $(\O,\S_0,\S_1)$ is a \emph{right product} if $\O\cong\S_1\times[0,1]$. It is called a \emph{left product} if $\O\cong\S_0\times[0,1]$.
\end{definition}
We remark that neither right nor left product cobordisms are necessary trivial.
We now recall a definition from \cite{KM}.

\begin{definition}\label{def:morse}
Let $(\O,\S_0,\S_1)$ be a cobordism. A function $f\colon\O\to[0,1]$ is called a \emph{Morse function} if $f^{-1}(0)=\S_0$, $f^{-1}(1)=\S_1$,
all the critical points of $f$ which lie in the interior $\Int\O$ are nondegenerate (the Hessian matrix is non-singular),
$f_Y:=f|_Y$ is a Morse function on $Y$ and, furthermore, for each $y\in Y$ such that $df(y)\neq 0$
we have
\be\label{eq:kerdf}
\ker df(y)\not\subset T_y Y.
\ee
\end{definition}

\begin{remark}
Our notation differs from that of \cite{BNR1} in two places. First, we use small $f$ to denote the Morse function on $\O$ (the capital $F$ is reserved
for another function). Furthermore we write $N_0$ and $N_1$ instead of $M_0$ and $M_1$, since $M$ is reserved for the \emph{membrane}: see
Definition~\ref{def:membrane} below.
\end{remark}

In \cite{KM} and \cite{BNR1}, condition (\ref{eq:kerdf}) was replaced by a condition on the gradient of $f$, so apparently depending on a
choice of a metric. The definition that we use does not involve choosing a Riemannian metric.  The next lemma shows that the two points of view are interchangeable.

\begin{lemma}[see \expandafter{\cite[Lemma 1.7]{BNR1}}]\label{lem:tang}
If $f$ satisfies \eqref{eq:kerdf} at each $y\in Y$, then there exists a Riemannian metric on $\O$ such that
$\nabla f$ is everywhere tangent to $Y$.
\end{lemma}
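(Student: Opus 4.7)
The strategy is to reformulate the tangency condition as an orthogonality requirement and then realise it by a careful choice of normal direction. Observe that $\nabla^g f$ is tangent to $Y$ at $y\in Y$ if and only if $g(\nabla^g f,v)=df(v)=0$ for every $v$ which is $g$-normal to $T_yY$; equivalently, the $g$-normal line to $Y$ at $y$ must lie in $\ker df(y)$. At a critical point $\nabla^g f=0$ and the condition is automatic, so the real content is at points where $df(y)\neq 0$.

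The plan is first to build a smooth vector field $X$ defined in a neighbourhood $\mathcal{U}$ of $Y$ in $\Omega$ such that $X|_Y$ is everywhere transverse (and, say, inward pointing) to $Y$ and $df(X)\equiv 0$ along $Y$. Near a point $y_0\in Y$ with $df(y_0)\neq 0$, hypothesis \eqref{eq:kerdf} says that $\ker df$ is a smooth rank-$n$ subbundle meeting $TY$ transversely; any local smooth section of the rank-$1$ complement of $\ker df\cap TY$ inside $\ker df$, oriented inward, will do. Near an isolated interior critical point of $f$ lying on $Y$, where $df=0$ so the condition $df(X)=0$ is automatic, pick any local inward-pointing smooth vector field. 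Patch these local constructions with a partition of unity $\{\rho_\alpha\}$ subordinate to a locally finite cover: the resulting $X=\sum_\alpha \rho_\alpha X_\alpha$ is smooth and remains inward-pointing (hence transverse to $Y$), while $df(X)=\sum_\alpha \rho_\alpha df(X_\alpha)=0$ on $Y$ because $df$ is linear and each summand vanishes.

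Next, use the flow of $X$ to produce a collar embedding $\psi\colon Y\times[0,\varepsilon)\hookrightarrow \Omega$ in which $X=\partial/\partial t$. Choose an arbitrary Riemannian metric $g_Y$ on $Y$ and equip the collar with the product metric $g_{\mathrm{col}}=\psi^*g_Y+dt^2$, so that $X$ is unit length and $g_{\mathrm{col}}$-orthogonal to $TY$. Pick any Riemannian metric $g_{\mathrm{far}}$ on the complement of a slightly smaller collar and interpolate with a partition of unity; a convex combination of Riemannian metrics is again a Riemannian metric, so this produces a global $g$ on $\Omega$ agreeing with $g_{\mathrm{col}}$ along $Y$. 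By construction $X|_Y$ spans the $g$-normal line to $Y$, and $X|_Y\subset \ker df$ by the previous step, so $\nabla^g f$ is tangent to $Y$ everywhere on $Y$.

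The one delicate point is the construction of $X$: because $\ker df$ is not a smooth subbundle in a neighbourhood of a boundary critical point, the local models on the critical and regular parts are genuinely different, and the gluing must respect both. The observation that makes the partition-of-unity argument succeed is that "belongs to $\ker df$" is a linear condition on vectors, hence preserved by arbitrary convex combinations, so no special compatibility between the local vector fields is needed beyond a consistent choice of inward orientation.
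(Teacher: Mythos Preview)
The paper does not supply its own proof here; it simply cites \cite[Lemma~1.7]{BNR1}. Your overall strategy---produce a smooth inward field $X$ along $Y$ with $X|_Y\subset\ker df$, then declare $X$ to be the unit normal of a collar metric---is the natural one and would work once $X$ is correctly built.

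There is, however, a genuine gap in your construction of $X$ near a boundary critical point $p$ (incidentally, these are \emph{boundary} critical points, not interior ones). You write that since $df(p)=0$ ``the condition $df(X)=0$ is automatic; pick any local inward-pointing smooth vector field''. But $df$ vanishes only at the single point~$p$; at every nearby $y\in Y\cap U_p$ with $y\neq p$ one has $df(y)\neq 0$, and an \emph{arbitrary} inward field $X_p$ will generally have $df(X_p)(y)\neq 0$. Hence in the glued field the summand $\rho_p(y)\,df(X_p)(y)$ does not vanish on the overlap, and your assertion that ``each summand vanishes'' fails. Your final paragraph does not rescue this: the observation that membership in $\ker df$ is a linear (hence convex) condition is correct, but it only helps if every local piece $X_\alpha(y)$ already lies in $\ker df(y)$, which your $X_p$ does not for $y\neq p$. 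The repair is to make a specific choice on $U_p$ rather than an arbitrary one---for instance, in boundary Morse coordinates with $Y=\{x_j=0\}$ and $f$ a nondegenerate quadratic, take $X_p=\partial/\partial x_j$, so that $df(X_p)=\pm 2x_j$ vanishes identically on $Y\cap U_p$. With that amendment the partition-of-unity argument goes through; one should just check that the local model invoked does not itself depend on the lemma being proved.
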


From now on, given a Morse function $M$ we shall assume that there is a Riemannian metric chosen so that $\nabla f$ is everywhere tangent to $Y$.

Let us now consider a critical point $z\in\O$ of $f$. If $z\in\Int\O$, we shall call it an \emph{interior} critical point. If $z\in Y$, then we
shall call it a \emph{boundary} critical point. Obviously, a boundary critical point is also a critical point of the restriction $f|_Y$.
There are two types of boundary critical points. The classification depends on whether the flow of $\nabla f$ near $z$ repels or attracts
in the direction of the normal to the boundary. More precisely, we have the following definition.

\begin{definition}\label{def:stable-unstabl}
A boundary critical point $z$ is called \emph{boundary stable}
if $T_zY$ contains the tangent space at $z$ to the unstable manifold of $z$.
If $T_zY$ contains the tangent space at $z$ to the stable manifold of $z$,
then $z$ is called \emph{boundary unstable}.
\end{definition}

\begin{definition}
The \emph{index} of a boundary critical point $z$ is the dimension of the stable manifold of $z$.
\end{definition}

\begin{remark}\label{remark:dbend-index-shift}
If $z\in Y$ is a boundary stable critical point of index $k$, then it is a critical point of $f|_Y$, but of index $k-1$.
\end{remark}

The change in the topology of the set $f^{-1}([0,x])$ as $x$ crosses the critical value corresponding to a boundary critical point
is described in detail in \cite[Section 2.4]{BNR1}. See also \cite{Hajduk,Blo}.

\begin{proposition}\label{prop:productcobordism}
Let $(\O,\S_0,\S_1)$ be a cobordism. Assume that there exists a Morse function $f\colon \O \to [0,1]$ such that $f$ has only
boundary stable (respectively, boundary unstable) critical points. Then $(\O,\S_0,\S_1)$ is a left product cobordism (respectively,
right product cobordism).
\end{proposition}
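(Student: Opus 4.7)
The plan is to reduce to the boundary stable case by using the involution $f\mapsto 1-f$, which swaps $\Sigma_0\leftrightarrow\Sigma_1$ and exchanges boundary stable with boundary unstable critical points (dualising their indices); the boundary unstable statement then follows from the boundary stable one applied to the cobordism with the roles of $\Sigma_0$ and $\Sigma_1$ reversed. First I fix a Riemannian metric on $\Omega$ via Lemma~\ref{lem:tang} so that $\nabla f$ is tangent to $Y$, and then proceed by induction on the number of critical points.

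For the base case of no critical points, the rescaled gradient $\xi=\nabla f/|\nabla f|^2$ is nowhere zero, tangent to $Y$, and satisfies $df(\xi)=1$; integrating $\xi$ from $\Sigma_0$ produces a diffeomorphism $\Sigma_0\times[0,1]\to\Omega$ exhibiting $\Omega$ as a left product.

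For the inductive step I pick a boundary stable critical point $z$ of index $k$ at value $c$, and $\epsilon>0$ small enough that $z$ is the only critical point in the slab $R=f^{-1}([c-\epsilon,c+\epsilon])$. In local coordinates $(x_1,\dots,x_{n+1})$ centred at $z$ with $Y=\{x_{n+1}=0\}$ and $\Omega\supset\{x_{n+1}\geq 0\}$, boundary stability forces the normal coordinate $x_{n+1}^2$ to appear with a negative sign, giving the standard form
\[
f = c - (x_1^2+\cdots+x_{k-1}^2+x_{n+1}^2) + (x_k^2+\cdots+x_n^2).
\]
The stable manifold $W^s(z)$ has dimension $k$, with $W^s(z)\cap\Sigma_{c-\epsilon}^{\mathrm{loc}}$ a $(k-1)$-dimensional half-sphere, while $W^u(z)$ has dimension $n+1-k$ and lies entirely in $Y$. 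Trajectories of $\xi$ starting on $\Sigma_{c-\epsilon}^{\mathrm{loc}}$ off the stuck half-sphere reach $\Sigma_{c+\epsilon}^{\mathrm{loc}}$ in finite time; the remaining trajectories converge to $z$. The delicate step is then to modify $\xi$ in a neighbourhood of $W^s(z)\cap R$ to a nowhere-zero vector field $\tilde\xi$, still tangent to $Y$, that reroutes these stuck trajectories onto $Y\cap R$ in uniformly bounded time. Integrating $\tilde\xi$ produces a diffeomorphism $R\cong\Sigma_{c-\epsilon}^{\mathrm{loc}}\times[c-\epsilon,c+\epsilon]$ realising $R$ as a left product, in which $\Sigma_{c+\epsilon}^{\mathrm{loc}}$ sits as a proper submanifold-with-corners of $\Sigma_{c-\epsilon}^{\mathrm{loc}}\times\{c+\epsilon\}$ with the complement absorbed into $Y$. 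Gluing this slab identification with the inductive product structures on $f^{-1}([0,c-\epsilon])$ and $f^{-1}([c+\epsilon,1])$ along the level sets $\Sigma_{c\pm\epsilon}$ yields the global diffeomorphism $\Omega\cong\Sigma_0\times[0,1]$.

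The main obstacle will be the construction of $\tilde\xi$ near the descending manifold $W^s(z)$: one must smoothly interpolate between the honest gradient flow (away from $W^s$) and a redirected nonvanishing flow (near $W^s$) that routes the stuck orbits to $Y$ in uniformly bounded time, while preserving tangency to $Y$ and producing a smooth product parametrisation of $R$ that matches the product structures on the adjacent no-critical-point regions. This is essentially the half-handle analysis for manifolds with boundary carried out in \cite[Section~2]{BNR1}.
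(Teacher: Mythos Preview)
The paper does not supply its own proof of this proposition; it is stated in the review Section~\ref{section:review-morse-theory-boundary} and the half-handle analysis behind it is deferred to \cite[Section~2]{BNR1} (see also \cite{Hajduk,Blo}). Your outline is in the same spirit as that reference and is essentially correct: the duality $f\mapsto 1-f$ reducing the unstable case to the stable one, the local normal form with the boundary-normal coordinate carrying a negative sign, and the plan to reroute the trajectories trapped on $W^s(z)$ into $Y$ are all on target, and you rightly identify the rerouting construction as the technical core handled in \cite{BNR1}.

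One step worth tightening is the inductive gluing. The three pieces you assemble are $f^{-1}[0,c-\epsilon]\cong\Sigma_0\times I$, $R\cong\Sigma_{c-\epsilon}\times I$, and $f^{-1}[c+\epsilon,1]\cong\Sigma_{c+\epsilon}\times I$, but when the lower block already contains critical points the interface $\Sigma_{c-\epsilon}$ is \emph{not} the full top $\Sigma_0\times\{1\}$ of the first product; it is only a codimension-$0$ submanifold of the upper boundary. What makes the gluing return $\Sigma_0\times[0,1]$ is that attaching $\Sigma'\times[0,1]$ along $\Sigma'\times\{0\}\hookrightarrow\partial M$ is an external collar and so does not change the diffeomorphism type of $M$; this should be said explicitly. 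A cleaner organisation that sidesteps the three-piece gluing is to induct directly on sublevel sets $f^{-1}[0,t]$: crossing a boundary stable critical value attaches a left half-handle $D^k_+\times D^{n+1-k}\cong D^{k-1}\times D^{n+1-k}\times[0,1]$, which is itself a collar on a disc in $Y$, so the diffeomorphism type of $f^{-1}[0,t]$ is unchanged and one reaches $\Omega\cong\Sigma_0\times[0,1]$ at $t=1$.
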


\begin{definition}\label{def:elementaryindex}
A cobordism $(\O,\S_0,\S_1)$ is called an \emph{elementary index $i$ right product cobordism}, (respectively an \emph{elementary index $i$~left
product cobordism}) if there exists a Morse function $f\colon\O\to[0,1]$, that has a single critical point, this critical point has index~$i$
and is boundary unstable (respectively boundary stable).
\end{definition}

The previous definition is the analogue in the boundary case of a single index~$i$ handle attachment corresponding to an interior critical point. The topological meaning is explained in detail in~\cite[Section 2]{BNR1} and in~\cite{Hajduk}.  We also refer to \cite{BNR2} for the study of homological properties of elementary right/left product cobordisms.

\subsection{The embedded case}
We shall now set up the notation for the embedded case.

\begin{definition}
A quadruple $(Z,\O,\S_0,\S_1)$ is called an \emph{embedded cobordism} if $Z$ is a closed manifold,
$(\O,\S_0,\S_1)$ is a cobordism and $\O$ is embedded in $Z\times[0,1]$
in such a way that $\O\cap Z\times\{0\}=\S_0$ and $\O\cap Z\times\{1\}=\S_1$ and $\O$ is transverse to $Z\times\{0,1\}$.
The \emph{codimension} of the embedding is the quantity $$\dim Z - \dim\S_0 =
\dim Z -\dim\O + 1.$$ An embedded cobordism is \emph{nondegenerate} if the function $F\colon Z\times[0,1]\to [0,1]$ given by projection
onto the second factor restricts to a Morse function on $\O$.
\end{definition}

Every embedded cobordism can be modified by a $C^2$-small perturbation to a nondegenerate one.
From now on we shall assume that all embedded cobordisms are nondegenerate.

Moreover, whenever we write $(Z,\O)$ as a cobordism, we understand the whole structure: the cobordism $\O$ is actually $(\O,\S_0,\S_1)$,
$Y=\cl(\p\O\sm(\S_0\cup\S_1))$, $F\colon Z\times[0,1]\to[0,1]$ is a function without critical points and $f=F|_\O$. We will usually denote
$m=\dim Z$ and $n+1=\dim\O$.

\begin{remark}
The whole theory could be developed in a more general setting, with $Z\times[0,1]$ replaced by a compact $(m+1)$-dimensional manifold $X$, and $F$ also allowed to have Morse critical points in $X\sm\O$. The special case $X=Z\times[0,1]$ is more transparent, and making the generalisation is
straightforward.
\end{remark}

\begin{definition}
Let $(Z,\O)$ be an embedded cobordism. The function $f\colon\O\to[0,1]$ given by $f=F|_\O$ is called the \emph{underlying Morse function}.
\end{definition}

We shall need one more notion.

\begin{definition}\label{def:regularhomotopy}
For a cobordism $(Z,\O)$ the family $F_t$ of functions from $Z\times[0,1]\to[0,1]$ is called a \emph{(nondegenerate) homotopy through submersions} if:
\begin{itemize}
  \item[(i)] for any $t$, $F_t(Z\times\{0\})=0$, $F_t(Z\times\{1\})=1$ and $F_t$ does not have any critical points on $Z\times[0,1]$;
  \item[(ii)] there exists a (possibly empty) finite set $\{t_1,\ldots,t_k\}\subset(0,1)$ such that if $t\not\in\{t_1,\ldots,t_k\}$, then $F_t$ restricts to a Morse function on $\O$.
\end{itemize}
\end{definition}

We will now choose a special metric on $Z\times[0,1]$, which will be used later.

\begin{lemma}\label{lem:funnylittlelemma}
For any two open subsets $V\subset U\subset Z\times[0,1]$, such that $V$ contains all the critical points of $f$ and $\ol{V}\subset U$, and for
any Riemannian metric $g$ on $U$ there exists a
Riemannian metric $h$ on $Z\times[0,1]$ such that we have
\[\nabla f(x)=\nabla F(x) \; \text{ for any } x \in \O\sm U.\]
Here we understand that $\nabla f$ is computed using the metric on $\O$ induced from the metric on $Z\times[0,1]$ (recall that $f=F|_\O$).
Furthermore, if $y\in Y$, we may assume that $\nabla f(y)\in T_yY$. Finally, the new metric agrees with $g$ on $V$.
\end{lemma}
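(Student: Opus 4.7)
The plan is to construct $h$ by specifying, on each piece of a suitable open cover of $Z\times[0,1]$, the one-dimensional distribution that will serve as the $h$-orthogonal complement of $\ker dF$, and then gluing everything together.

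First I would fix nested open sets $V\subset V'\subset\ol{V'}\subset U$ and a neighbourhood $W$ of $\O\sm U$ in $Z\times[0,1]$ with $W\cap\ol{V'}=\emptyset$; such a $W$ exists because $\O\sm U$ is closed in $Z\times[0,1]$ and disjoint from the closed set $\ol{V'}\subset U$. On $W$ I construct a smooth vector field $X$ with $dF(X)\equiv 1$, tangent to $\O$ along $\O\cap W$, and tangent to $Y$ along $Y\cap W$. Pointwise existence on $\O\sm U$ follows from $df\neq 0$ there, so $\{v\in T_x\O : df_x(v)=1\}$ is a nonempty affine subspace; on $Y\sm U$ it follows from the Morse condition $\ker df_y\not\subset T_yY$ of Definition~\ref{def:morse}, which forces $df|_{T_yY}\neq 0$ and makes the analogous subspace inside $T_yY$ nonempty. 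These affine constraints being convex, a partition-of-unity argument gives a smooth section on $\O\sm U$ respecting the tangency to $Y$; since $dF$ is a submersion, this section extends smoothly to $W$ while preserving $dF(X)\equiv 1$.

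Next I define an auxiliary metric $h_W$ on $W$ by declaring $X\perp_{h_W}\ker dF$, $|X|_{h_W}=1$, and equipping $\ker dF|_W$ with an arbitrary smooth Euclidean structure. The gradient identity $h_W(\nabla F,\cdot)=dF(\cdot)$ combined with $dF(X)=1$ and $X\perp\ker dF$ forces $\nabla_{h_W}F=X$; so on $\O\cap W$ this gradient lies in $T\O$ and therefore equals $\nabla f$ computed in the induced metric, and on $Y\cap W$ it lies in $TY$. Let $\{\phi_U,\phi_W\}$ be a smooth partition of unity subordinate to the open cover $\{U,W\}$ of $Z\times[0,1]$ and set $h:=\phi_U g+\phi_W h_W$, a well-defined Riemannian metric since positive-definiteness of symmetric bilinear forms is a convex condition. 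Because $V\cap W\subset V'\cap(Z\times[0,1]\sm\ol{V'})=\emptyset$, we have $\phi_W\equiv 0$ on $V$ and hence $h|_V=g|_V$; because $\phi_U$ is supported in $U$, we have $h=h_W$ on $\O\sm U$ and therefore $\nabla F=\nabla f$ there.

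The main technical obstacle is the tangency $\nabla f\in T_yY$ on the whole of $Y$. On $Y\sm U$ it holds by construction of $h_W$, and on $V\cap Y$ it holds provided $g$ is initially chosen (via Lemma~\ref{lem:tang}) so that $\nabla_g f$ is tangent to $Y$ on $V\cap Y$. On the transition region $Y\cap(U\sm V)$, however, this tangency is generally not preserved by a convex combination of metrics when $\dim\O\geqslant 3$, since the condition $T_yY^{\perp_h}\subset\ker df$ is non-linear in $h$. The clean remedy is to glue not the full metrics but their constituents: assemble a single one-dimensional distribution $H$ on $Z\times[0,1]$ (destined to be the $h$-orthogonal complement of $\ker dF$) from $\langle X\rangle$ on $W$ and the $g$-orthogonal complement $\langle\nabla_g F\rangle$ on $V$ by a partition-of-unity section of the convex affine bundle of lines transverse to $\ker dF$, maintaining containment in $T\O$ along $\O$ and in $TY$ along $Y$; separately glue the metrics on $\ker dF$ by partition of unity; finally define $h$ by declaring $H\perp_h\ker dF$ with the prescribed metrics on each summand. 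Because every ingredient respects the required tangency and matches $g$ on $V$, the resulting $h$ satisfies all three stated properties simultaneously.
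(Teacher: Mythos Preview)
Your argument is correct and takes a genuinely different, more conceptual route than the paper. The paper works in explicit local coordinates: around each point $x\notin U$ it chooses coordinates with $\O=\{x_{k+1}=\cdots=x_m=0\}$ and writes down a positive-definite matrix $M(z)$ whose first row is $(\partial F/\partial x_1,\ldots,\partial F/\partial x_m)$, so that in those coordinates $\nabla F=\partial/\partial x_1$, which is arranged to lie in $T\O$ (and, when $x\in Y$, in $TY$); it then glues all these local metrics together with $g$ by a partition of unity and simply asserts that the result ``has all the desired properties.'' You instead first build a single vector field $X$ with $dF(X)=1$, tangent to $\O$ along $\O$ and to $Y$ along $Y$, using that these affine constraints are preserved under convex combination, and only afterwards manufacture $h$ by declaring $X\perp\ker dF$.

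The payoff of your route is precisely that you catch what you call the ``main technical obstacle'': a convex combination of metrics need \emph{not} preserve the condition that the gradient lie in a prescribed subspace, so a naive partition-of-unity glue of metrics can spoil both $\nabla F\in T\O$ on overlaps inside $\O\sm U$ and $\nabla f\in TY$ on overlaps inside $Y$. Your remedy---glue sections of the affine bundle $\{dF=1\}$ (equivalently, the transverse line field $H$) rather than the metrics themselves, and then assemble $h$ from the glued distribution---is the correct fix. The same criticism applies verbatim to the paper's final partition-of-unity step, which does not address the point. Two minor remarks: take $W$ large enough, e.g.\ $W=(Z\times[0,1])\sm\ol{V'}$, so that $\{U,W\}$ really is an open cover of $Z\times[0,1]$; and, as you already note, the tangency on $Y\cap V$ is effectively a hypothesis on $g$ that is not stated in the lemma but is satisfied in the paper's only application of it.
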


\begin{proof}
We split the complement of $U$ into three cases: away from $\O$, in $\O \sm Y$ and finally in $Y$.

For each point $x\in Z\times[0,1]\setminus (U\cup\O)$ we choose a ball $B_x$ with centre $x$, disjoint from $\O\cup V$ and an arbitrary metric $h_x$ on $B_x$.

Now we consider $x\in \O\sm (U\cup Y)$. Let $m:= \dim Z + 1$.  Choose a ball $B_x$ which is disjoint from $V\cup Y$,
such that there is a local coordinate system $(x_1,\dots,x_k,x_{k+1},\dots,x_m)$ in $B_x$,
centred on $x$, in which
$\O\cap B_x=\{x_{k+1}=\dots=x_m=0\}$. As $B_x$ does not contain any critical points of $f$, we may assume that $\frac{\p F}{\p x_1}(z)\neq 0$ for all $z\in B_x$; we potentially need to relabel the coordinate system (and/)or  take a smaller ball for $B_x$.

Let us assume for now that $\frac{\p F}{\p x_1}(z)>0$ for all $z\in B_x$.
Choose an $m \times m$ matrix $M(z)$ such that $M_{1j}(z)=M_{j1}(z)=\frac{\p F}{\p x_j}(z)$ for $j=1,\dots, m$, with all other entries chosen so that $M(z)$ is positive definite and depends smoothly on $z$. This can be always achieved, since $M_{11}>0$. The matrix $M$ determines a metric on $B_x$, with respect to the standard basis $\{\p/\p x_i\, |\, i=1,\dots, m\}$ for $T_z Z$ induced by the coordinate system.  This metric is denoted $h_x$, and using this metric we have $\nabla F=(1,0,\dots,0)\in T_z\O$.
(If $\frac{\p F}{\p z}<0$ we get a metric such that $\nabla F=(-1,0,\dots,0)$.)  To see this recall that $\nabla F$ is the unique vector field $v$ such that $h_x(v_z,\cdot) = dF_z \in T^*_z Z$. Then observe that for any tangent vector $\begin{bmatrix}a_1 & \dots &  a_m \end{bmatrix}^T \in T_z Z$ we have:
\[\begin{bmatrix} 1 & 0 & \dots & 0 \end{bmatrix} \begin{bmatrix} M_{11}& \dots & M_{1m} \\ \vdots & \ast & \ast  \\ M_{m1} & \ast & \ast \end{bmatrix}\begin{bmatrix}a_1 \\ \vdots \\ a_m \end{bmatrix} = \sum_{i=1}^{m} a_i \frac{\p F}{\p x_i} = dF \begin{bmatrix} a_1 \\ \vdots \\ a_m \end{bmatrix},\]
where we have omitted the point $z \in Z$ from the notation.
By functoriality, $\nabla f$ is the image of the orthogonal projection of $\nabla F$ onto $T_z\O$. In particular, for any $z\in B_x\cap\O$, we have
$\nabla f(z)=\nabla F(z)$.

If $x\in Y\sm U$, we use the same argument to construct a metric $h_x$ on a ball $B_x$, where we assume that $B_x\cap V=\emptyset$.
This time we assume additionally that the coordinate system on $B_x$ is chosen so that $Y=\O\cap\{x_k=0\}$. Then we have $\frac{\p F}{\p x_j}(0,\dots,0)\neq 0$ for some
$j\in\{1,\dots,k-1\}$, for otherwise either $y$ is a critical point of $f$, or the condition \eqref{eq:kerdf} is violated. Then we choose a matrix $M(z)$
on (an again potentially smaller) $B_x$, which in turn induces a metric $h_x$ on $B_x$, similarly to the case above, in such a way that for all $z\in B_x\cap\O$ we have $\nabla f(z)=\nabla F(z)$ and, if additionally $z\in Y$, then $\nabla f(z)\in T_zY$.

The balls $B_x$, where $x$ runs through all points in $Z\times[0,1]\sm U$, together with $U$ constitute an open covering of $Z\times[0,1]$, which is compact. Let $\phi_U$, $\phi_x$
for $x\in Z\times[0,1]\sm U$ be a partition of unity
subordinate to this covering. We define a metric $h=\phi_U\cdot g+\sum_x \phi_x\cdot h_x$. It has all the desired properties.
\end{proof}

We conclude the section with a standard but fundamental result.

\begin{lemma}[Embedded Morse lemma]\label{lem:eml}
Assume that $p\in\O$ is a critical point of $f$ of index $k$. Then there exist local coordinates $x_1,\dots,x_{n+1},y_1,\dots,y_{m-n}$
in $Z\times[0,1]$ centred at $p$, such that in these coordinates:
\[F(x_1,\dots,x_{n+1},y_1,\dots,y_{m-n}) = -x_1^2-x_2^2-\dots-x_k^2+x_{k+1}^2+\dots+x_{n+1}^2+y_1+F(p),\]
and such that moreover:
\begin{itemize}
\item[$\bullet$] if $p$ is an interior critical point, then the intersection of $\O$ with this coordinate system is given by $$\{y_1=\dots=y_{m-n}=0\};$$
\item[$\bullet$] if $p$ is a boundary stable critical point, then $\O$ is given by
  \begin{equation}
\{y_1=\dots=y_{m-n}=0\}\cap\{x_1\geqslant 0\};\label{eq:boundx1}
\end{equation}
\item[$\bullet$] if $p$ is a boundary unstable critical point, then $\O$ is given by
 \begin{equation}
\{y_1=\dots=y_{m-n}=0\}\cap\{x_{n+1}\geqslant 0\}.\label{eq:boundxj}
\end{equation}

\end{itemize}
\end{lemma}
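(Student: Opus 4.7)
The proof strategy is to reduce all three cases to a common three-step recipe, independent of the type of critical point. First, put $f = F|_\O$ into Morse normal form by working intrinsically on $\O$, producing Morse coordinates $(x_1,\dots,x_{n+1})$ centred at $p$. Second, extend this chart to a neighbourhood of $p$ in $Z\times[0,1]$ via a tubular neighbourhood of $\O$, introducing transverse coordinates $(y_1',\dots,y_{m-n}')$ so that $\O$ is locally cut out by $\{y'=0\}$. Third, perform a change of the $y'$ variables so that $F$ takes the form $f + y_1$.

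For step~1 I would invoke the classical Morse lemma when $p\in\Int\O$, and the Morse lemma for manifolds with boundary (see \cite{KM} or \cite[\S2.4]{BNR1}) when $p\in Y$. In the boundary cases, Definition~\ref{def:stable-unstabl} selects the axis that carries the half-space condition: a boundary stable point has its unstable directions tangent to $Y$, so $Y$ becomes $\{x_1=0\}$ and $\O$ becomes $\{x_1\geqslant 0\}$, while the unstable case is symmetric with $x_{n+1}$ in place of $x_1$. For step~2, a tubular neighbourhood of $\O$ in $Z\times[0,1]$ provides the required transverse coordinates; in the boundary cases it should be chosen so that $Y$ sits in the hyperplane $\{x_1=0\}$ (resp.\ $\{x_{n+1}=0\}$), which is easy because $Y$ is a smooth submanifold of $\O$.

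For step~3, I write $F(x,y') = f(x) + G(x,y')$ on the chart. Since $G$ vanishes identically on $\{y'=0\}=\O$, the partials of $G$ in the $x$-directions are zero at $p$, so $dG(p)$ lies in the span of $dy_1',\dots,dy_{m-n}'$. Because $F$ has no critical points but $df(p)=0$, we have $dG(p) = dF(p)\neq 0$, and a linear change of the $y'$-coordinates can be used to arrange $dG(p) = dy_1'$. Then the assignment $y_1 := G(x,y')$ and $y_j := y_j'$ for $j\geqslant 2$ defines a local diffeomorphism by the implicit function theorem (its Jacobian at $p$ is the identity), preserves the zero set $\{y=0\} = \{y'=0\}$, and transforms $F$ into $f(x) + y_1 + F(p)$. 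Crucially, this final change of variables does not touch the $x$-coordinates, so the half-space conditions $x_1\geqslant 0$ or $x_{n+1}\geqslant 0$ are automatically preserved.

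The step that I expect to require the most care is the first one in the boundary cases: one must appeal to the version of the Morse lemma that simultaneously normalises $f$ and linearises the boundary $Y$ as $\{x_1=0\}$ (stable) or $\{x_{n+1}=0\}$ (unstable), and one must check that the choice of half-space coordinate is forced by Definition~\ref{def:stable-unstabl} (and is consistent with the index convention of Remark~\ref{remark:dbend-index-shift}, since $f|_Y$ has a Morse critical point of index $k-1$ in the boundary stable case and $k$ in the boundary unstable case). Once step~1 is done compatibly with the boundary, the tubular extension and the $y'$-substitution in steps~2 and~3 proceed uniformly and the statement follows.
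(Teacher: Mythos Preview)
Your proposal is correct and follows essentially the same three-step strategy as the paper: apply the (boundary) Morse lemma on $\O$ to obtain the $x$-coordinates, extend via a tubular neighbourhood to get transverse $y'$-coordinates, then redefine $y_1$ as $F - f$ and invoke the inverse function theorem. The paper cites \cite[Lemma~2.6]{BNR1} for the boundary Morse lemma rather than \cite{KM} or \cite[\S2.4]{BNR1}, and phrases the Jacobian check as ``diagonal and nondegenerate'' rather than ``the identity,'' but these are cosmetic differences.
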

\begin{proof}
First let us consider the case that $p$ is an interior critical point. The Morse lemma (see e.g.\  \cite[Lemma 2.2]{Mi1})
says that there exist local coordinates $x_1,\dots,x_{n+1}$
on an open neighbourhood $V\subset\O$ such that $f=F|_\O$ is equal to
$F(0)-x_1^2-x_2^2-\dots-x_k^2+x_{k+1}^2+\dots+x_{n+1}^2$. Let $\wt{y}_1,\dots,\wt{y}_{m-n}$ be local coordinates in the normal bundle of $V$ in $Z\times[0,1]$. Then $x_1,\dots,x_{n+1},\wt{y}_1,\dots,\wt{y}_{m-n}$
form a local coordinate system in $Z\times[0,1]$ in a neighbourhood of $p$. By a linear change we may assume that $\frac{\p F}{\p \wt{y}_j}=0$
if $j>1$.
Let us define
\[y_1 := F(x_1,\dots,x_{n+1},\wt{y}_1,\dots,\wt{y}_{m-n})-(-x_1^2-x_2^2-\dots+x_{k+1}^2+\dots
+x_{n+1}^2)-F(p)\]
and $y_j:= \wt{y}_j$ for $j=2,\dots,m-n$.
Consider now the map of open neighbourhoods of $\R^{m+1}$ given by
\[\Phi\colon(x_1,\dots,x_{k+1},\wt{y}_1,\dots,\wt{y}_{m-n})\mapsto(x_1,\dots,x_{k+1},y_1,y_2,\dots,y_{m-n}).\]
The derivative of $\Phi$ at $0$ is diagonal and nondegenerate, hence $\Phi$ is a local diffeomorphism by the inverse function theorem.
Furthermore, the set $\{y_1=\dots=y_{m-n}=0\}$ is invariant under $\Phi$. The import of these two facts is that $(x_1,\dots,y_{m-n})$ forms
a local coordinate system near $p\in Z\times[0,1]$.  In this system $\O$ is given by $\{y_1=\dots=y_{m-n}=0\}$ and $F$ has the form as described in
the statement.

The proof in the case of boundary critical points is analogous. Using \cite[Lemma~2.6]{BNR1} we find local coordinates on $\O$
such that \eqref{eq:boundx1} or \eqref{eq:boundxj} is satisfied. Then we extend this coordinate system as in the case of an interior critical point.
We leave filling in further details.
\end{proof}

\section{Embedded gradient-like vector fields}\label{section:embedded-gl-vector-fields}

We need to develop the theory of embedded gradient-like vector fields in order to prove our subsequent results on rearrangement, cancellation and splitting.

We will soon introduce a notion of gradient-like vector fields for embedded submanifolds.
First let us recall the definition of a gradient-like vector field for manifolds with boundary.

\begin{definition}[Compare \expandafter{\cite[Definition 1.5]{BNR1}}]
Let $(\O,\S_0,\S_1)$ be a cobordism, and let $f$ be a Morse function.  We shall say that a vector field~$\xi$ on $\O$ is
\emph{gradient-like with respect to $f$}, if the following
conditions are satisfied:
\begin{itemize}
\item[(a)] $\xi\cdot f := df(\xi) >0$ away from the set of critical points of $f$;
\item[(b)] if $p$ is a critical point of $f$ of index $k$, then there exist local coordinates $x_1,\dots,x_{n+1}$
in a neighbourhood of $p$, such that
\[f(x_1,\dots,x_{n+1})=f(p)-(x_1^2+\dots+x_k^2)+(x_{k+1}^2+\dots+x_{n+1}^2)\]
and
\[\xi=(-x_1,\dots,-x_k,x_{k+1},\dots,x_{n+1})\text{ in these coordinates}.\]
\item[(b')] Furthermore, if $p$ is a boundary critical point, then the above coordinate system can be chosen so that
$Y=\{x_j=0\}$ and $\O=\{x_j\geqslant 0\}$  for some $j\in\{1,\dots,n+1\}$.
\item[(c)] The vector field $\xi$ is tangent to $Y$ on all of $Y$.
\end{itemize}
\end{definition}

Now let us define an analogue for embedded cobordisms. Observe that we cannot simultaneously assume that $\xi\cdot F>0$ away from the critical points of $F$ and that $\xi$ is everywhere tangent to $\O$, because these two conditions are mutually exclusive if $f=F|_\O$ has critical points. The vector
field that we define below has a critical point at each critical point of $f$. The following definition comes from R.~Sharpe's paper \cite[page 67]{Sh}
and turns out to provide a very useful analytic language for studying embedded cobordisms.

\begin{definition}\label{def:eglv}
Let $(Z,\O)$ be an embedded cobordism, with $F\colon Z\times[0,1]\to[0,1]$ the projection and the underlying Morse function $f$.
A vector field $\xi$ on $Z \times [0,1]$ is an \emph{embedded gradient-like vector field} with respect to $F$ if:
\begin{itemize}
\item[(a)] for any $x\in Z\times[0,1]$ which is not a critical point of $f$, we have $(\xi\cdot F)_x = dF_x(\xi_x)>0$;
\item[(b)] for any $x\in\O$ we have $\xi_x\in T_x(\Omega)$, and for any $y\in Y$ we have $\xi_y\in T_y Y$;
\item[(c)] for any $p\in\O$ such that $df(p)=0$, there exists an open subset $U\subset Z\times[0,1]$ with a choice of
local coordinates $(x_1,\dots,x_{n+1},y_1,\dots,y_{m-n})$ centred at $p$
such that $U\cap\O$ is given by $\{y_1=\dots=y_{m-n}=0\}$ (if $p$ is a boundary critical point, than $U\cap\O=\{x_1\geqslant 0,y_1=\ldots=y_{m-n}=0\}$
and $Y=\{x_1=y_1=\ldots=y_{m-n}=0\}$), $\xi$ in these local coordinates has the form
\begin{equation}\label{eq:c1}
(-x_1,-x_2,\dots,-x_k,x_{k+1},\dots,x_{n+1},(y_1^2+\dots+y_{m-n}^2),0,\dots,0)
\end{equation}
and
\begin{equation}\label{eq:c2}
F(x_1,\dots,y_{m-n})=F(p)-x_1^2-\dots-x_k^2+x_{k+1}^2+\dots+x_{n+1}^2+y_1.
\end{equation}
\end{itemize}
\end{definition}

Note that
\[dF(\xi)=\sum_{i=1}^{n+1}2x_i^2+\sum_{\ell=1}^{m-n}y_{\ell}^2\]
so that (a) and (c) are consistent.

We have the following result.
\begin{proposition}
For any cobordism $(Z,\O)$ there exists an embedded gradient-like vector field.
\end{proposition}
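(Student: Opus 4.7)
The plan is to patch explicit local models near the critical points of $f$ to the gradient $\nabla F$ with respect to a suitably chosen Riemannian metric on $Z\times[0,1]$. At each critical point $p$ of $f$, apply the Embedded Morse Lemma~\ref{lem:eml} to obtain pairwise disjoint open neighbourhoods $U_p$ with local coordinates $(x_1,\dots,x_{n+1},y_1,\dots,y_{m-n})$ centred at $p$ in which $F$, $\Omega$, and---if $p$ is a boundary critical point---$Y$ take the normal forms prescribed by Definition~\ref{def:eglv}(c). In these coordinates, define $\xi_p$ by formula~\eqref{eq:c1}. A direct computation gives
\[
dF(\xi_p) \;=\; 2\sum_{i=1}^{n+1} x_i^2 \,+\, \sum_{\ell=1}^{m-n} y_\ell^2,
\]
which is strictly positive on $U_p\setminus\{p\}$. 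Among the $y$-components of $\xi_p$ only the first is nonzero, and it equals $y_1^2+\cdots+y_{m-n}^2$, which vanishes on $\Omega\cap U_p=\{y_1=\cdots=y_{m-n}=0\}$; hence $\xi_p$ is tangent to $\Omega$ there. For a boundary critical point, the coordinate $x_1$ defining $Y$ has $\xi_p$-component equal to $-x_1$, which vanishes on $Y=\{x_1=0,\,y=0\}$, so $\xi_p$ is also tangent to $Y$ along $Y\cap U_p$.

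Next, choose nested shrinkings $p\in U_p''\subset\overline{U_p''}\subset U_p'\subset\overline{U_p'}\subset U_p$ and apply Lemma~\ref{lem:funnylittlelemma} with $V=\bigcup_p U_p''$ and $U=\bigcup_p U_p'$ (together with any auxiliary metric on $U$) to obtain a Riemannian metric $h$ on $Z\times[0,1]$ satisfying $\nabla F=\nabla f$ along $\Omega\setminus U$ and $\nabla F\in TY$ along $Y\setminus U$. Since $F$ is a submersion, $\nabla F$ is nowhere zero and $dF(\nabla F)=|\nabla F|^2_h>0$ everywhere. Now glue: pick a smooth $\rho\colon Z\times[0,1]\to[0,1]$ with $\rho\equiv 1$ on $\bigcup_p U_p'$ and $\mathrm{supp}(\rho)\subset\bigcup_p U_p$, let $\widetilde\xi$ equal $\xi_p$ on each $U_p$ (its values elsewhere are irrelevant), and set
\[
\xi \;:=\; \rho\,\widetilde\xi \;+\; (1-\rho)\,\nabla F.
\]
Condition~(c) of Definition~\ref{def:eglv} holds on $U_p''$, where $\rho\equiv 1$ forces $\xi=\xi_p$. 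For~(a), $dF(\xi)=\rho\,dF(\widetilde\xi)+(1-\rho)\,dF(\nabla F)$ is a convex combination of two non-negative functions, the second strictly positive, so $dF(\xi)>0$ off the critical points of $f$. For~(b), the only nontrivial region is the transition annulus $U_p\setminus U_p'$, which lies outside $U=\bigcup_q U_q'$ because the $U_q$ (hence the $U_q'$) are pairwise disjoint; there Lemma~\ref{lem:funnylittlelemma} gives $\nabla F=\nabla f$ on $\Omega$ and $\nabla F\in TY$ on $Y$, while the first step supplies the same tangencies for $\widetilde\xi=\xi_p$, so the convex combination remains tangent to both.

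The main technical point is arranging that the local model $\xi_p$ and the global $\nabla F$ are \emph{simultaneously} tangent to $\Omega$ and to $Y$ throughout the transition annulus. This is precisely what Lemma~\ref{lem:funnylittlelemma} buys us: it allows the metric to be prescribed arbitrarily inside an a~priori chosen neighbourhood of the critical points, while forcing $\nabla F$ to inherit the correct tangencies outside that neighbourhood, so that the convex combination with the explicit local model is well-defined on $\Omega$ and on $Y$ without introducing any normal component.
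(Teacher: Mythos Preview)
Your proof is correct and follows essentially the same approach as the paper's: both use the Embedded Morse Lemma~\ref{lem:eml} to produce the local models~\eqref{eq:c1}, invoke Lemma~\ref{lem:funnylittlelemma} to arrange that $\nabla F$ is tangent to $\Omega$ and $Y$ outside a chosen neighbourhood of the critical points, and then glue the two by a convex combination. The only cosmetic difference is that the paper uses a full partition of unity $\{\phi_V,\phi_1,\dots,\phi_r\}$ subordinate to the cover $\{V,U_1',\dots,U_r'\}$, whereas you use a single bump function $\rho$; your verification of conditions (a)--(c) is in fact slightly more explicit than the paper's.
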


For the convenience of the reader we present a straightforward proof.

\begin{proof}
Let $p_1,\dots,p_r$ be the critical points of $F|_\O$. For each $j\in\{1,\dots,r\}$ we choose an open neighbourhood $U_j'$ in $Z \times[0,1]$ such that the Embedded Morse Lemma~\ref{lem:eml} holds; i.e.\ there exist coordinates of a special form as set out in that lemma.  We choose $U_j$ to be a neighbourhood of $p_j$ such that $\ol{U_j}\subset U_j'$. Furthermore, let $V$ be
an open subset of $Z\times[0,1]$ such that $V\cup\bigcup U_j'=Z\times[0,1]$ and $U_j\cap V=\emptyset$ for any $j\in\{1,\dots,r\}$.
Let $\phi_V,\phi_1,\dots,\phi_r$ be a partition of unity subordinate to the covering $V\cup\bigcup U_j'$. In particular $\phi_j|_{U_j}\equiv 1$.

Choose a Riemannian metric on any $U_j$ such that the local coordinates $x_1,\dots,x_{n+1},$ $y_1,\dots,y_{m-n}$ are orthogonal.
By Lemma~\ref{lem:funnylittlelemma} there exists a Riemannian metric on $Z\times[0,1]$ such that $\nabla F(x)$ is tangent to $\O$ for all $x\in\O\sm V$
and $\nabla F(x)$ is tangent to $Y$ for all $x\in Y\sm V$.

We define a vector field on $V$ by $\xi_V=\nabla F$ and $\xi_j$ on $U_j'$ by the explicit formula \eqref{eq:c1}. Then $\xi=\phi_V\xi_V+\sum \phi_j\xi_j$
is a vector field on $Z\times[0,1]$ which, by construction, satisfies the desired properties.
\end{proof}

We remark that $\xi$ has a critical point at each critical point $p$ of $f$. This is not a Morse critical point, because
the coordinate corresponding to $y_1$ vanishes up to order $2$. Nevertheless, we have well-defined stable and unstable manifolds of $\xi$ at $p$, which we now discuss. The following lemma is a consequence of the local description of a critical point.

\begin{lemma}
In a neighbourhood of a critical point, in local coordinates as in Definition~\ref{def:eglv}, the stable manifold is given by
\[\{x_{k+1}=\dots=x_{n+1}=0,\ y_2=\dots=y_{m-n}=0,\ y_1<0\}.\]
The unstable manifold is given by
\[\{x_1=\dots=x_k=0,\ y_2=\dots=y_{m-n}=0,\ y_1>0\}.\]
The intersection of a stable manifold $($respectively: unstable manifold$)$ with a level set $F^{-1}(p-\e)$, for $\e>0$ sufficiently small $($respectively: with a level set $F^{-1}(p+\e))$ is a $k$-dimensional disc $($respectively: $(n+1-k)$-dimensional disc$)$. The boundary of the disc is the stable manifold
of $\xi|_\O$ $($respectively: the unstable manifold of $\xi|_\O)$.
\end{lemma}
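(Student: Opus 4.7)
The plan is to integrate the flow of $\xi$ explicitly in the local coordinates supplied by Definition~\ref{def:eglv}, which will immediately yield both the stable and unstable manifold descriptions. The ODE $\dot{\gamma} = \xi(\gamma)$ decouples into four pieces: $\dot{x}_i = -x_i$ for $1 \leq i \leq k$, $\dot{x}_i = x_i$ for $k+1 \leq i \leq n+1$, $\dot{y}_j = 0$ for $2 \leq j \leq m-n$, and the single nonlinear equation $\dot{y}_1 = y_1^2 + y_2^2 + \dots + y_{m-n}^2$. The linear pieces integrate by inspection to $x_i(t) = x_i(0) e^{-t}$ for $i \leq k$, $x_i(t) = x_i(0) e^{t}$ for $i > k$, and $y_j(t) \equiv y_j(0)$ for $j \geq 2$.

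To identify the stable manifold at $p$, I will ask which initial conditions produce trajectories converging to $p = 0$ as $t \to +\infty$. Convergence immediately forces $x_i(0) = 0$ for $i > k$ and $y_j(0) = 0$ for $j \geq 2$. Under these constraints the $y_1$-equation reduces to $\dot{y}_1 = y_1^2$, whose closed-form solution is $y_1(t) = y_1(0)/(1 - y_1(0)\, t)$; this tends to $0$ as $t \to +\infty$ precisely when $y_1(0) \leq 0$, and blows up in finite positive time when $y_1(0) > 0$. Combined, this yields the stable manifold exactly as stated; the unstable manifold then follows from the symmetric analysis with $t \to -\infty$.

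For the intersection with the level set $F^{-1}(p - \e)$, I will substitute $x_{k+1} = \dots = x_{n+1} = 0$ and $y_2 = \dots = y_{m-n} = 0$ into formula~\eqref{eq:c2}, obtaining $y_1 = x_1^2 + \dots + x_k^2 - \e$; the constraint $y_1 < 0$ then cuts out the open $k$-disc $\{x_1^2 + \dots + x_k^2 < \e\}$ parametrised by $(x_1, \dots, x_k)$. The analogous calculation on $F^{-1}(p + \e)$ produces the $(n+1-k)$-disc for the unstable manifold. Finally, the boundary sphere $\{x_1^2 + \dots + x_k^2 = \e,\ y_1 = 0\}$ of the stable disc lies entirely in $\O$ (since every $y_j$ vanishes there), where $\xi|_\O$ reduces to the classical Morse gradient form $(-x_1, \dots, -x_k, x_{k+1}, \dots, x_{n+1})$ whose stable manifold is $\{x_{k+1} = \dots = x_{n+1} = 0\} \cap \O$; intersecting this with $f^{-1}(p-\e)$ recovers exactly that sphere, and the unstable case is symmetric. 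The only step requiring any real thought is the analysis of the nonlinear $y_1$-equation, but the explicit closed-form solution above makes it transparent, so I expect no serious obstacle.
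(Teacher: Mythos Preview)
Your proof is correct and is precisely the computation the paper has in mind: the paper gives no argument beyond the sentence ``the following lemma is a consequence of the local description of a critical point,'' and your explicit integration of the decoupled ODE system is the natural way to unpack that sentence. One very minor remark: your analysis of $\dot{y}_1 = y_1^2$ correctly shows convergence for $y_1(0)\leq 0$, so the full stable set is the closed half-space $y_1\leq 0$; the paper's strict inequality $y_1<0$ is describing the part of the membrane lying outside $\O$, with the $y_1=0$ slice being exactly the boundary $W^s(\xi|_\O)$ you identify at the end.
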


The following terminology is essentially due to B.~Perron \cite{Pe}.

\begin{definition}\label{def:membrane}
Let $p$ be a critical point of $f$. The \emph{ascending membrane} $M^u_p$ is the unstable manifold of $p$ with respect to
$\xi$. The \emph{descending membrane} $M^s_p$ is
the stable manifold of $p$ with respect to $\xi$.
\end{definition}

From now on, when speaking of a stable and unstable manifold of $\xi$, we will understand a stable and unstable manifold of $\xi|_\Omega$, since we will use the term membrane of Definition~\ref{def:membrane} for the ambient version.

\subsection{Integrating the vector field to recover the Morse function}\label{section:recovering-morse-function}

Starting with a Morse function $F$ and a gradient-like vector field $\xi$, we might wish to alter the vector field $\xi$ to $\xi'$, and the altered vector field $\xi'$ may then not necessarily be a gradient-like vector field for $F$. Under some conditions we shall be able to find a function $F'$, such that $\xi'$ is a gradient-like vector field with respect to $F'$. This is the idea of the
Vector Field Integration Lemma~\ref{lem:integrate} below.

We remark that in this paper Integration Lemma~\ref{lem:integrate} is only used in the proof of Elementary Cancellation Theorem~\ref{lem:eec}.  We present the Integration Lemma separately since we think it is of interest independently from the cancellation theorem.

Before stating and proving the Vector Field Integration Lemma, first we need to introduce some more terminology. Part (a) of the next definition is standard.

\begin{definition}\label{def:trajectories}
Let $\xi$ be a smooth vector field $Z\times[0,1]$.
\begin{itemize}
\item[(a)] A \emph{trajectory} is a map $\gamma\colon A\to Z\times[0,1]$, where $A$ is a connected subset of $\R$,
such that $\frac{d}{dt}\gamma(t)=\xi(\gamma(t))$. We will always assume that $A$ is maximal, i.e.\ $\gamma$ cannot be extended over a larger subset of $\R$. Note that, up to reparametrisation, that is changing $t$ to $t+a$ for some $a\in\R$,
exactly one trajectory of $\xi$ passes through a given point of $Z\times[0,1]$.
\item[(b)] A \emph{broken trajectory} is a union of trajectories $\gamma_1,\dots,\gamma_s$ such that for any $j=1,\dots,s-1$ we have
$\lim_{t\to\infty}\gamma_j(t)=\lim_{t\to-\infty}\gamma_{j+1}(t)$. These limits are critical points of $\xi$.
\end{itemize}
\end{definition}

Note that there is no mention of the function $F$ in the next definition, which introduces the notion of an almost gradient-like vector field.  In Lemma~\ref{lem:integrate} we will see that an almost gradient-like vector field with one extra assumption is sufficient to be able to recover a Morse function with respect to which the vector field is an embedded gradient-like vector field.

\begin{definition}\label{def:AG}
A vector field $\xi$ on $Z\times[0,1]$ is called \emph{almost gradient-like} if it satisfies the following conditions:
\begin{itemize}
\item[(AG1)] $\xi$ is tangent to $\O$ and to $Y$;
\item[(AG2)] $\xi$ has no critical points on $Z\times[0,1]\sm\O$ and no critical points on $Z\times\{0,1\}$;
\item[(AG3)] $\xi$ has finitely many critical points on $\O$. For each critical point $z$ of $\xi$, there exist local coordinates near $z$, denoted
by $x_1,\dots,x_{n+1},y_1,\dots,y_{m-n}$ such that $\O$ in the local coordinates is given by $y_1=\dots=y_{m-n}=0$ (if $z$ belongs to $Y$, there is
one more equation, namely $x_1\geqslant 0$) and there is an index $k$ such that in these local coordinates $\xi$ has the form as in \eqref{eq:c1}:
\[(-x_1,-x_2,\dots,-x_k,x_{k+1},\dots,x_{n+1},(y_1^2+\dots+y_{m-n}^2),0,\dots,0);\text{ and}\]
\item[(AG4)] if a trajectory $\gamma(t)$ does not hit $Z\times\{0\}$, then $\lim_{t\to-\infty}\gamma(t)$ exists (and by standard arguments observing that $\lim_{t\to-\infty}\gamma'(t)=0$ it is a critical point of $\xi$). Likewise, if $\gamma(t)$ does not hit $Z\times\{1\}$, then $\lim_{t\to+\infty}\gamma(t)$ exists.
\end{itemize}
\end{definition}

If $\xi$ is an embedded gradient-like vector field, then it is easy to see that it is an almost gradient-like vector field. Given an almost gradient-like vector field $\xi$ it is possible that there does not exist any function $F$ with respect to which $\xi$ is an embedded gradient-like vector field. For example, the conditions (AG1)--(AG4) do not exclude the possibility that there exist a broken trajectory starting and ending at the same point. As next result shows, this is the only obstruction.

\begin{lemma}[Vector Field Integration Lemma]\label{lem:integrate}
Suppose that $\xi$ is an almost gradient-like vector field such that there are no broken trajectories starting and ending at the same point.
Then there exists a smooth function $F$ without critical points on $Z\times[0,1]$ such that $\xi$ is an embedded gradient-like vector field with respect to~$F$.
\end{lemma}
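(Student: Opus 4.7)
The strategy is to prescribe $F$ near critical points using the local coordinate form provided by (AG3), fix compatible values at these points using the no-cycle hypothesis, and glue the pieces together along the flow of $\xi$ via a partition of unity.

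\emph{Step 1: critical point ordering.} Define $z_i \prec z_j$ if a non-trivial broken trajectory of $\xi$ runs from $z_i$ to $z_j$. By hypothesis this relation is acyclic, hence a partial order on the finite critical set; extend to a total order and choose values $0 < c_1 < c_2 < \cdots < c_r < 1$ respecting it.

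\emph{Step 2: local models.} For each critical point $z_i$, use the coordinates supplied by (AG3) to define on a neighbourhood $U_i$ the standard function
\[
F_i(x,y) = c_i - x_1^2 - \cdots - x_k^2 + x_{k+1}^2 + \cdots + x_{n+1}^2 + y_1.
\]
A direct calculation gives $\xi\cdot F_i = 2(x_1^2+\cdots+x_{n+1}^2) + (y_1^2+\cdots+y_{m-n}^2)$, which is strictly positive away from $z_i$. Shrink each $U_i$ so that: (a) the closures $\overline{U_i}$ are pairwise disjoint; (b) the image $F_i(\overline{U_i})$ lies in a small interval $I_i$ around $c_i$, with the $I_i$ pairwise disjoint and contained in $(0,1)$; and (c) the only trajectories meeting $U_i$ are those with $z_i$ as an $\alpha$- or $\omega$-limit. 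Property (c) is achievable because, by (AG4) and the local linearisation at $z_i$, the set of trajectories not converging to $z_i$ stays a positive distance from $z_i$.

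\emph{Step 3: global extension.} Pick $V_i \Subset U_i$. On the complement $W = Z\times[0,1]\setminus\bigcup V_i$, every trajectory arc has two endpoints on $(Z\times\{0,1\})\cup\bigcup\partial V_i$, at each of which $F$ is already prescribed: $0$ on $Z\times\{0\}$, $1$ on $Z\times\{1\}$, and $F_i$ on $\partial V_i$. By Step 1 and Step 2(c) these entry/exit values are strictly ordered (entry strictly less than exit), so affine interpolation along the flow parameter yields a smooth monotonic function $F_W$ on $W$; smoothness follows from smooth dependence of solutions of ODEs on initial conditions. Combine $\{F_i\}$ and $F_W$ via a partition of unity $\{\rho_i, \rho_W\}$, arranging $F = F_i$ on $V_i$ and $F = F_W$ outside the $U_i$, so that near each $z_i$ the required local form \eqref{eq:c2} holds automatically.

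The principal obstacle is ensuring $\xi\cdot F > 0$ on the transition collars $U_i\setminus V_i$. Since both $F_W$ and $F_i$ take values in $I_i$ on the collar, we have $|F_W - F_i| \leq |I_i|$, and differentiating $F = \rho_i F_i + (1-\rho_i) F_W$ along $\xi$ yields $\rho_i(\xi\cdot F_i) + (1-\rho_i)(\xi\cdot F_W) + (\xi\cdot\rho_i)(F_i - F_W)$. The first two terms are bounded below by a uniform positive constant on the collar (independently of $|I_i|$), while the third can be made arbitrarily small by shrinking $|I_i|$ since $|\xi\cdot\rho_i|$ is fixed once $\rho_i$ is chosen; hence the sum is strictly positive. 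With $F$ so constructed, the remaining conditions of Definition~\ref{def:eglv} are immediate: the boundary values $F=0,1$ hold by construction; the local form near each $z_i$ is exactly $F_i$; and since $\xi\cdot F > 0$ away from critical points of $\xi$, while $dF(z_i) \neq 0$ thanks to the $y_1$ term in $F_i$, the function $F$ has no critical points on $Z\times[0,1]$.
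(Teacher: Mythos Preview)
Your overall strategy---order the critical points, plant the local model functions near them, interpolate along the flow in between, and glue---is exactly the paper's approach. The gap is in Step~2(c). The claim that one can shrink $U_i$ so that \emph{only} trajectories with $z_i$ as an $\alpha$- or $\omega$-limit meet $U_i$ is simply false whenever $z_i$ has both a stable and an unstable direction, which here is always the case (even for an interior index~$0$ point there is the stable half-line $y_1<0$ in the membrane). Concretely, in the coordinates of (AG3) take one stable coordinate equal to $\epsilon$ and one unstable coordinate equal to $\epsilon$, all others zero: the forward flow sends the stable coordinate to $0$ and the unstable one to $\infty$, and vice versa backward, so neither limit is $z_i$; yet such points exist arbitrarily close to $z_i$. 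Your justification (``the set of trajectories not converging to $z_i$ stays a positive distance from $z_i$'') is therefore incorrect.

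This matters for Step~3, where you invoke (c) to conclude that along every trajectory arc in $W$ the entry value is strictly less than the exit value. Without (c) nothing prevents a trajectory from leaving $V_j$ and later entering $V_i$ with $i\le j$ (including $i=j$, i.e.\ re-entering the same box), and then your affine interpolation is not monotone. What you actually need---and what the paper isolates as a separate lemma---is the weaker but still nontrivial statement that the $V_i$ can be chosen so that no trajectory leaves $V_j$ and subsequently enters $V_i$ for $i\le j$. Proving this requires a genuine compactness argument using the no-broken-cycles hypothesis: a sequence of offending trajectories between ever-smaller neighbourhoods would subconverge to a broken trajectory that violates the ordering. Once that replacement lemma is established, the remainder of your construction goes through essentially as written; you should also say a word about smoothness of $F_W$ across the loci where a trajectory's entry/exit pattern changes (the paper simply allows $F$ to be piecewise smooth at this stage and then perturbs).
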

\begin{proof}
Let $z_1,\dots,z_s$ be the critical points of $\xi$. We introduce a partial order relation on the critical points, namely we say that $z_i<z_j$ if there is at least one broken trajectory starting at $z_i$ and ending at $z_j$. The assumption of the lemma guarantees that this is a partial order. Let us relabel the critical points so that
if $i<j$, then we cannot have $z_j<z_i$. This relabelling is, in general, not unique.  The proof of Lemma~\ref{lem:integrate} continues after the statement and proof of Lemma~\ref{lem:csn}.

\begin{lemma}\label{lem:csn}
There exist open neighbourhoods $V_1,\dots,V_s$ of $z_1,\dots,z_s$ such that for any $i\leqslant j$, there is no trajectory which leaves $V_j$ and then enters $V_i$.
\end{lemma}
\begin{proof}[Proof of Lemma~\ref{lem:csn}]
The proof follows the ideas of \cite[Assertion 1, page 50]{Mi2}. Suppose that the statement is false.  That is, suppose that for all open neighbourhoods $V_1,\dots,V_s$ there exists $i \leqslant j$ and a trajectory which leaves $V_j$ and later enters $V_i$.

Let $U^i_0$ be a coordinate neighbourhood of $z_i$ from (AG3) of Definition~\ref{def:AG}. For $r\geqslant 1$ we
define $U^i_r=\{x_1^2+\dots+x_{n+1}^2+y_1^2+\dots+y_{m-n}^2\leqslant \frac{\e}{r}\}$ for some $\e$ sufficiently small (so that $U^i_1\subset U^i_0$).

Now suppose that there are indices $i,j$, with $i\leqslant j$ and, for all $r$, that there is a trajectory $\gamma_r$ going from $U_r^j$ to $U_r^i$. Let us choose a maximal $j$ such
that this holds. Working in local coordinates
we convince ourselves that $\gamma_r$ must intersect $\p U_1^j$ and $\p U_1^i$. Let $w_r^1$ be a point where $\gamma_r$ leaves $\p U_r^j$, and let $w_r^2$ be the point where
$\gamma_r$ hits $\p U^j_1$ after leaving $\p U_r^j$. Let $w_r^3$ be the point where $\gamma_r$ hits $\p U_r^i$ for the first time after $w_r^2$.

Since all points $w_r^2$, $r=2,3,\ldots$ belong to the sphere $\p U^j_1$, up to passing to a subsequence we can assume
$w_r^2$ converges to a point $w_0\in \p U^j_1$.  Let $\gamma_0$ be the trajectory through $w_0$. We claim that $\lim_{t\to-\infty}\gamma_0(t)=z_j$.
To see this, observe that for any $l>r$ the trajectory through $w_l^2$ hits $\p U^j_r$ in the past. As $w_l^2\to w_0$ and $\p U^j_r$ is closed, we
infer that $\gamma_0$ hits $\p U^j_r$ in the past as well.  But $r$ was arbitrary, so there exists a sequence $t_k$
converging to $-\infty$, such that $\gamma_0(t_k)\to z_j$. By (AG4), $\gamma_0$ is a trajectory starting from $z_j$. We do not
claim that $\gamma_0$ ends in $z_i$, because the sequence of trajectories $\gamma_r$ can converge to a broken trajectory, a part of which
is $\gamma_0$.

To complete the proof, let us look at $\lim_{t\to+\infty}\gamma_0(t)$. Observe that the time which the trajectory $\gamma_r$ takes to go from $w_r^2$ to $w_r^3$
goes to infinity as $r\to\infty$ (because the speed of the vector field near critical points is very small).
Therefore, $\gamma_0(t)$ exists for any $t>0$; in particular $\gamma_0$ cannot hit $Z\times\{1\}$. By (AG4),
$\lim_{t \to +\infty}\gamma_0(t)=z_k$ for some $k\in\{1,\dots,s\}$. This means that $k>j$.

Consider now neighbourhoods $U^k_r$ as defined above. As $\lim\gamma_0(t)=z_k$, it follows that $\gamma_0$ hits all the boundaries $\p U^k_r$. If we fix $r$,
it follows that for $l$ sufficiently large $\gamma_l$ hits $\p U^k_r$ as well. We can now relabel the trajectories so that $\gamma_r$ hits
$\p U_r^k$ and then $U_r^i$ i.e.\ we want a trajectory which hits $\p U_p^k$ to have index $p$; this may involve passing to a further subsequence in the $\gamma_r$.
Then the sequence $\{\gamma_r\}_{r \geqslant 1}$ may be considered as a sequence of trajectories coming close to $z_k$ first, and then to $z_i$.
But we assumed that $j$ is the maximal index for which this is possible, and $k>j$, so we obtain a contradiction.
\end{proof}

We resume the proof of Lemma~\ref{lem:integrate}.  Let us now choose open neighbourhoods $V_1,\dots,V_s$ of $z_1,\dots,z_s$ as given to us by Lemma~\ref{lem:csn}. We define a function $F$ to be
equal to $-x_1^2-\dots-x_k^2+x_{k+1}^2+\dots+x_{n+1}^2+y_1+j/(s+1)$ on $\ol{V}_j$, where the signs agree with the signs in (AG3). In particular, $F(z_j)=j/(s+1)$.
For the sake of completeness we define $\ol{V}_0:= Z\times\{0\}$ and $\ol{V}_{s+1}:= Z\times\{1\}$. We also put $F(\ol{V}_0)=0$ and $F(\ol{V}_{s+1})=1$.

Now let us choose any point $z\in Z\times[0,1]\sm\bigcup\ol{V}_j$. Let $\gamma$ be a trajectory through $z$.  Reparametrise $\gamma$ so that $\gamma(0)=z$.
By (AG4) there exists $a,b\in\R$, $-a<0<b$ such that $\gamma(-a)\in \ol{V}_i$, $\gamma(b)\in \ol{V}_j$ for some $i,j$,
and $\gamma(-a,b)$ does not intersect $V_1\cup\dots\cup V_s$. By Lemma~\ref{lem:csn} we have $i<j$. Then we define
\[F(z)=\frac{b}{a+b}F(\gamma(-a))+\frac{a}{a+b}F(\gamma(b)).\]
Since $F(\gamma(-a))<F(\gamma(b))$ (because $i<j$), the function $F$ increases along $\gamma$. Thus $\xi\cdot F(z)>0$.

In general, the function $F$ is a continuous, piecewise smooth function with all non-smooth points lying on $\p V_1\cup\dots\cup \p V_s$. The function $F$ satisfies $\xi\cdot F(z)>0$ for $z\in V_j\sm\{z_j\}$ and whenever $\xi\cdot F(z)$ is well-defined.
Therefore, we can slightly perturb $F$ to a smooth function, which still satisfies $\xi\cdot F>0$
away from $z_1,\dots,z_s$.
\end{proof}

\subsection{The embedded isotopy lemma}\label{section:isotopy-lemma}

Next we are going to sketch a proof of the embedded analogue of the Isotopy Lemma of~\cite[Lemma 4.7]{Mi2}.

\begin{lemma}[Embedded Isotopy Lemma]\label{lem:eil}
Let $(Z,\O)$ be an embedded cobordism. Suppose that there are two level sets $a,b\in[0,1]$ with $a<b$ such that $f$ has no critical points
on $\O\cap f^{-1}[a,b]$. Let $\xi$ be an embedded gradient-like vector field.
Assume additionally that there is a diffeomorphism $h$ of the triple $(Z,\O\cap f^{-1}(b),Y\cap f^{-1}(b))$ to itself which is isotopic to the identity.

Then there exists an embedded gradient-like vector field $\xi'$, agreeing with $\xi$ away from $F^{-1}(a,b)$,
such that $\psi'=h\circ\psi$, where $\psi$ and $\psi'$ are the diffeomorphisms
\[\psi,\psi' \colon (Z,\O\cap f^{-1}(a),Y\cap f^{-1}(a)) \to (Z,\O\cap f^{-1}(b),Y\cap f^{-1}(b)) \]
induced by the flows of the vector fields $\xi$ and $\xi'$ respectively.
\end{lemma}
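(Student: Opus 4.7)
The plan is Milnor's isotopy argument, adapted to preserve the embedded structure. Roughly: straighten the flow of $\xi$ in the slab $F^{-1}[a,b]=Z\times[a,b]$, twist it by a fiber-preserving diffeomorphism built from an isotopy from $\mathrm{id}$ to $h$, and push the resulting vector field forward to obtain $\xi'$.

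First rescale $\xi$ on the slab so that $\xi\cdot F\equiv 1$; this is possible since $f$ has no critical points there, so $\xi\cdot F>0$ by (a) of Definition~\ref{def:eglv}. Let $\phi_t$ denote the flow and define the straightening diffeomorphism
\[\Psi\colon Z\times[a,b]\to Z\times[a,b],\qquad \Psi(z,t):=\phi_{t-a}(z,a).\]
It preserves the second coordinate, is the identity on $Z\times\{a\}$, restricts to $(\psi,\mathrm{id})$ on $Z\times\{b\}$, carries $(\O\cap f^{-1}(a))\times[a,b]$ onto $\O\cap F^{-1}[a,b]$ and $(Y\cap f^{-1}(a))\times[a,b]$ onto $Y\cap F^{-1}[a,b]$ (because $\xi$ is tangent to $\O$ and to $Y$), and satisfies $\Psi_{*}(\p/\p t)=\xi$.

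Using the hypothesis, pick an isotopy $h_s$, $s\in[0,1]$, of the triple $(Z,\O\cap f^{-1}(b),Y\cap f^{-1}(b))$ with $h_0=\mathrm{id}$ and $h_1=h$. Conjugation by $\psi$ yields an isotopy $\ol h_s:=\psi^{-1}\circ h_s\circ\psi$ of the triple at level $a$, with $\ol h_0=\mathrm{id}$. Choose a smooth cutoff $\mu\colon[a,b]\to[0,1]$ that is identically $0$ near $t=a$ and identically $1$ near $t=b$, and define the fiber-preserving diffeomorphism
\[K\colon Z\times[a,b]\to Z\times[a,b],\qquad K(z,t):=(\ol h_{\mu(t)}(z),t).\]
Since $\mu'$ vanishes near both endpoints, $K_{*}(\p/\p t)=\p/\p t$ in neighbourhoods of $Z\times\{a\}$ and $Z\times\{b\}$; moreover $K$ preserves the product subsets corresponding to $\O$ and $Y$.

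Finally set $\xi':=(\Psi\circ K)_{*}(\p/\p t)$ inside the slab and $\xi':=\xi$ outside. The vanishing-derivative property of $\mu$ gives $\xi'=\Psi_{*}(\p/\p t)=\xi$ in a neighbourhood of $F^{-1}(\{a,b\})$, so $\xi'$ extends smoothly and agrees with $\xi$ on $Z\times([0,1]\setminus(a,b))$. It is tangent to $\O$ and to $Y$ (since $\Psi\circ K$ preserves the corresponding product sets) and satisfies $\xi'\cdot F=1$ inside the slab (since $\Psi\circ K$ preserves the second coordinate), so $\xi'$ is again an embedded gradient-like vector field for $F$. Tracing the flow of $\p/\p t$ through the conjugation by $\Psi\circ K$, the induced map between the level sets is
\[(z,a)\;\longmapsto\;(\Psi\circ K)(z,b)=\Psi(\ol h(z),b)=(\psi(\psi^{-1}h\psi(z)),b)=(h(\psi(z)),b),\]
so the new flow diffeomorphism is $\psi'=h\circ\psi$, as required. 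The only delicate point is the smooth matching of $\xi'$ with $\xi$ across $F^{-1}(\{a,b\})$, which is precisely what forces the vanishing derivative of $\mu$ at both endpoints and the verification that $K_{*}(\p/\p t)=\p/\p t$ (and not merely some other horizontal field) near $t=a$ and $t=b$.
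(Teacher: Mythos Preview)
Your argument is essentially the paper's own proof (Milnor's isotopy trick: straighten the slab via the flow, twist by the isotopy, push forward), and the computation of $\psi'=h\circ\psi$ is correct. The paper conjugates the isotopy at level $b$ by its flattening map $\Psi$, whereas you first conjugate $h_s$ by $\psi$ down to level $a$; these are the same construction in different bookkeeping.

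One small technical point deserves attention. You begin by rescaling $\xi$ on the slab so that $\xi\cdot F\equiv 1$, and then set $\xi':=(\Psi\circ K)_*(\p/\p t)$ inside and $\xi':=\xi$ outside. Near $F^{-1}(\{a,b\})$ you correctly observe that $(\Psi\circ K)_*(\p/\p t)=\Psi_*(\p/\p t)$, but this equals the \emph{rescaled} $\xi$, not the original one; so as written $\xi'$ fails to be smooth across $F^{-1}(\{a,b\})$ unless $\xi\cdot F$ happened to equal $1$ there. The fix is immediate: either multiply your $\xi'$ inside the slab by the positive function $(\xi\cdot F)$ (original value) at the end, which does not change trajectories and hence preserves $\psi'$, or avoid the rescaling entirely by defining $\Psi$ via level sets and then setting $\xi':=\Phi_*(\xi)$ with $\Phi:=\Psi\circ K\circ\Psi^{-1}$, which is the identity near the boundary of the slab --- this is exactly how the paper organises it.
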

\begin{proof}
We follow \cite[Proof~of~Lemma~4.7]{Mi2}. Let $h_t$, $t\in[a,b]$ be an isotopy of the triple $(Z,\O\cap f^{-1}(b),Y\cap f^{-1}(b))$ so that $h_a$ is the identity and $h_b=h$.
We also assume that $h_t$ does not depend on $t$ for $t$ close to $a$ and $b$. Let $H\colon Z\times[a,b]\to Z\times[a,b]$ be given by $(x,t)\to (h_t(x),t)$.
Define a diffeomorphism $\Psi\colon Z\times[a,b]\to Z\times[a,b]$ by integrating the flow of $\xi$; that is, if we take a point $(x,t)\in Z\times[a,b]$, there is a trajectory of $\xi$ passing through that point. This trajectory hits a point $(x',a)\in Z\times\{a\}$. We define $\Psi(x,t)=(x',t)$. It is easy to check
that $\Psi$ is a diffeomorphism. Now let $\Phi := \Psi^{-1}\circ H\circ\Psi\colon Z\times[a,b]\to Z\times[a,b]$ and we define $\xi':=\Phi_*(\xi)$ on $Z\times[a,b]$.  Since $H$ is the identity near $Z\times\{a,b\}$, we infer that $\Phi$ is the identity near $Z\times\{a,b\}$, and hence $\xi'$ agrees with $\xi$ in a neighbourhood of $Z\times\{a,b\}$.  We extend $\xi'$ to $Z\times[0,1]$ by making $\xi'$ equal to $\xi$ on $Z\times([0,a]\cup [b,1])$

 By definition, the flow of $\xi'$ induces a diffeomorphism of the triple $(Z\times\{a\},\O\cap f^{-1}(a),Y\cap f^{-1}(a))$
to $(Z,\O\cap f^{-1}(b),Y\cap f^{-1}(b))$, which is equal to $h\circ\psi$.
Note that, by construction, $\xi'$ is tangent to $\O$ at all points in $\O\cap f^{-1}[a,b]$ and is tangent to $Y$ on $Y\cap f^{-1}[a,b]$.
\end{proof}

\section{Rearrangement of critical points}\label{section:rearrangement}

The aim of this section is to prove the Elementary and Global Rearrangement Theorems, in Sections~\ref{section:elem-rearrangement-thm} and \ref{section:global-rearrangement} respectively.

\subsection{The embedded elementary rearrangement theorem}\label{section:elem-rearrangement-thm}

The rearrangement theorem in the embedded case is stated or proved
in many places, like \cite{GS,Pe,Ro,Sh}. For the convenience
of the reader, and because we would also like to deal with boundary critical points, we present a proof.

\begin{theorem}[Elementary Rearrangement Theorem]\label{thm:ert}
Let $(Z,\O,\S_0,\S_1)$ be an embedded cobordism and let~$f$ be the underlying Morse function. Suppose that $f$ has exactly two critical
points $z_1$ and $z_2$ with $f(z_1)<f(z_2)$. Let $\xi$ be an embedded gradient-like vector field.
For $i=1,2$, let $M^u_i\subset Z\times[0,1]$ (respectively: $M^s_i\subset Z\times[0,1]$),
be the ascending membrane of $z_i$ (respectively: the descending membrane)
under the flow of $\xi$.
If $$M^u_1\cap M^s_2=\emptyset,$$ then for any two values $a,b\in[0,1]$
there exists a function $G\colon Z\times[0,1]\to[0,1]$
such that
\begin{itemize}
\item[(E1)] $G$ has no critical points;
\item[(E2)] $G(z_1)=a$ and $G(z_2)=b$;
\item[(E3)] The restriction $g:=G|_\Omega$ is Morse. It has two critical points, $z_1$ and $z_2$ with the same type as $f$.
\end{itemize}
\end{theorem}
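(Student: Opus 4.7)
The plan is to adapt Milnor's classical rearrangement argument to the embedded setting: construct a smooth function $\pi\colon Z\times[0,1]\to[0,1]$ that is constant along the flow of $\xi$, takes the value $0$ on an open neighbourhood of $z_1$ and the value $1$ on an open neighbourhood of $z_2$, and then produce $G$ by reparametrising $F$ in a $\pi$-dependent way.

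First I would observe that the hypothesis $M^u_1\cap M^s_2=\emptyset$, together with the fact that $F$ is strictly increasing along every non-stationary trajectory of $\xi$ (which rules out any trajectory from $z_2$ down to $z_1$) and the uniqueness of trajectories through a point, forces all four intersections $M^{\star}_1\cap M^{\bullet}_2$ with $\star,\bullet\in\{s,u\}$ to be empty. In particular the descending membranes $M^s_1,M^s_2$ are globally disjoint. Writing $A_i:=M^s_i\cap(Z\times\{0\})$, the standard Milnor-style separation argument (as in \cite[Thm 4.1]{Mi2}) then furnishes a smooth $\pi_0\colon Z\times\{0\}\to[0,1]$ which is locally constant equal to $i-1$ on an open neighbourhood of $A_i$ for $i=1,2$.

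Next I extend $\pi_0$ to $\pi\colon Z\times[0,1]\to[0,1]$ by transporting along the flow of $\xi$: for a point $x$ whose backward $\xi$-trajectory meets $Z\times\{0\}$, set $\pi(x)=\pi_0$ of the hit point; for $x\in M^u_i$, whose backward trajectory limits to $z_i$, set $\pi\equiv i-1$. Because $\pi_0$ is locally constant near each $A_i$, and because trajectories issuing from $z_i$ are backward-limits of trajectories meeting $Z\times\{0\}$ arbitrarily close to $A_i$, the resulting $\pi$ is smooth and in fact locally constant on a full open neighbourhood $W_i$ of $z_i$ in $Z\times[0,1]$; away from the critical points the smoothness of $\pi$ is inherited from smooth dependence of the flow on initial conditions. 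By construction $d\pi(\xi)\equiv 0$ everywhere, and $d\pi=0$ on each $W_i$.

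Finally, with $c_i=F(z_i)$ I pick a smooth $h\colon[0,1]\times[0,1]\to[0,1]$ satisfying $h(t,0)=0$, $h(t,1)=1$, $\partial_s h>0$, $h(0,c_1)=a$ and $h(1,c_2)=b$, and define $G:=h(\pi,F)$. Then $dG=(\partial_t h)\,d\pi+(\partial_s h)\,dF$; off $\{z_1,z_2\}$ the forms $d\pi$ and $dF$ are linearly independent (because $d\pi(\xi)=0$ while $dF(\xi)>0$), and on each $W_i$ we have $d\pi=0$ and so $dG=(\partial_s h)\,dF\neq 0$, yielding (E1). Property (E2) is immediate from $G(z_1)=h(0,c_1)=a$ and $G(z_2)=h(1,c_2)=b$. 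For (E3), the same transversality argument applied to $g=G|_\Omega$ shows $dg$ vanishes only at $z_1,z_2$; on $W_i$ the Hessian reads $d^2g=(\partial_s h)\,d^2f$, non-degenerate of the same index as $f$; and since $\xi$ is unchanged, the stable and unstable membranes, and hence the boundary stability type of each critical point, are preserved.

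The main obstacle I expect is ensuring genuine smoothness of $\pi$ across the critical points of $\xi$: it is not enough to demand $\pi_0(A_i)=i-1$, one needs $\pi_0$ to be \emph{locally constant} on a neighbourhood of $A_i$ so that its flow-extension is locally constant on a whole ambient neighbourhood $W_i$ of $z_i$ and therefore smoothly absorbs the critical point of $\xi$. A related delicate point is the embedded analogue of Milnor's separation of the closed sets $\overline{A_1},\overline{A_2}\subset Z\times\{0\}$, which rests precisely on the disjointness of the four pairs of membranes established in the first step.
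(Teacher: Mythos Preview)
Your approach is essentially the paper's: both construct a flow-invariant separating function on $Z\times[0,1]$ by choosing it on $Z\times\{0\}$ to be locally constant near the membrane traces and extending along trajectories of $\xi$, and then set $G$ equal to a monotone reparametrisation of $F$ with parameter this separating function. The paper additionally demands that its $\Psi$ satisfy $\Psi(\,\cdot\,,y)=\mathrm{id}+\mathrm{const}$ near each critical level, so that $G=F+\mathrm{const}$ near $z_i$; your weaker hypothesis $\partial_s h>0$ still suffices, since the Hessian of $g$ at $z_i$ is then a positive scalar multiple of that of $f$.

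There is one slip in your justification of (E1): the assertion that $d\pi$ and $dF$ are linearly independent off $\{z_1,z_2\}$ is false --- $d\pi$ vanishes identically on the open sets $W_1,W_2$ and possibly elsewhere, and in any case linear independence of two $1$-forms does not by itself prevent a specific linear combination from vanishing. The correct argument is already implicit in your parenthetical: simply evaluate on $\xi$ to obtain $dG(\xi)=(\partial_t h)\,d\pi(\xi)+(\partial_s h)\,dF(\xi)=(\partial_s h)\,dF(\xi)>0$ away from $z_1,z_2$, and at $z_i$ use $d\pi=0$ with $dF\neq 0$. This is precisely what the paper does. The same evaluation on $\xi$ (tangent to $\Omega$ and to $Y$) also yields the non-criticality of $g$ off $\{z_1,z_2\}$ and the boundary Morse condition $\ker dg(y)\not\subset T_yY$, which you should check explicitly for (E3).
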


\begin{remark}
  We note that the new Morse function $G$ can be chosen so that there is a nondegenerate homotopy through submersions between $G$ and the old Morse function~$F$.
\end{remark}

\begin{proof}
The proof goes along similar lines to \cite[Section 4]{Mi2} (see also \cite[Proposition 4.1]{BNR1}).
We define $K_1=M^u_1\cup M^s_1$ and $K_2=M^u_2\cup M^s_2$. The emptiness of $M^u_1\cap M^s_2$ implies that $K_1\cap K_2=\emptyset$.
Let $T_1=K_1\cap Z\times\{0\}$ and $T_2=K_2\cap Z\times\{0\}$. We see that $T_1$ and $T_2$ are not empty, because $\dim M^s_1,\dim M^s_2\geqslant 1$.
Also $T_1\cap T_2=\emptyset$.

Let $W_1\supset T_1$ and $W_2\supset T_2$ be
two disjoint open subsets of $Z\times\{0\}$. Let $\mu\colon Z\times\{0\}\to[0,1]$ be
a smooth function, such that $\mu(W_1)=0$ and $\mu(W_2)=1$. We extend $\mu$ to $Z\times[0,1]$ as follows: if $x\in K_1$, we put $\mu(x)=0$;
if $x\in K_2$, we put $\mu(x)=1$.  If $x\not\in(K_1\cup K_2)$, then the trajectory of $\xi$ through $x$ hits $Z\times\{0\}$ in a unique
point $y\in Z\times\{0\}$.
Then we define $\mu(x):=\mu(y)$. This definition implies in particular that $\mu$ is constant along all the trajectories of $\xi$.

Following Milnor we choose a smooth function
$\Psi\colon [0,1]\times[0,1]\to[0,1]$ satisfying:
\begin{itemize}
\item[$\bullet$] $\frac{\p\Psi}{\p x}(x,y)>0$ for all $(x,y)\in[0,1]\times[0,1]$;
\item[$\bullet$] there exists $\delta>0$, such that $\Psi(x,y)=x$ for all $x\in[0,\delta]\cup[1-\delta,1]$ and $y\in [0,1]$;
\item[$\bullet$] for any $s\in(-\delta,\delta)$ we have $\Psi(f(z_1)+s,0)=a+s$ and
$\Psi(f(z_2)+s,1)=b+s$.
\end{itemize}

We now define
\[G(x):=\Psi(F(x),\mu(x))\textrm{ and }g:=G|_\O.\]
Observe that by the chain rule $$\xi\cdot G=dG(\xi) = \Big(\frac{\p \Psi}{\p x}dF+\frac{\p\Psi}{\p y}d\mu \Big)(\xi)= \frac{\p \Psi}{\p x}\xi\cdot F+\frac{\p\Psi}{\p y}\xi\cdot\mu.$$ Since $\mu$ is constant on the trajectories of $\xi$,
we have $\xi\cdot\mu=0$. As $\frac{\p\Psi}{\p x}>0$ and $\xi$ is an embedded gradient-like vector field, we see that $\xi\cdot G(x)\geqslant 0$ with equality if and only if $x$ is a critical point of $f$. On the other hand by the definition of $\Psi$ we have that, near a critical point of $f$,
the function $G$ is equal to $F$ plus a constant. Hence $G$ has no critical points. We compute that by construction $G(z_1)=a$ and $G(z_2)=b$:
\[G(z_1) = \Psi(F(z_1),\mu(z_1)) = \Psi(F(z_1),1) = \Psi(f(z_1),1) = a;\]
\[G(z_2) = \Psi(F(z_2),\mu(z_2)) = \Psi(F(z_2),1) = \Psi(f(z_2),1) = b.\]
Now, to check that (E3) is still satisfied, let $x\in\O$. We have $g(x)=\Psi(f(x),\mu(x))$. As $\mu$ is everywhere tangent to $\O$, we can repeat the above argument to show that
$\xi\cdot g\geqslant 0$, with equality only at the critical points of $f$. Since $g-f$ is constant near critical points, the types of
the critical points are the same.

Finally consider $x\in Y$, such that $x$ is not a critical point of $f$. Then $\xi$ is tangent to $T_xY$ by definition and $\xi\cdot g>0$ as $g$ is just the restriction of $G$. This means that $dg(\xi)>0$, and in particular that $T_xY\not\subset\ker dg$. Thus, as required, $g$ is a Morse function in the sense of Definition~\ref{def:morse}.
\end{proof}

It is easy to see that the argument of Theorem~\ref{thm:ert} can be repeated if $f$ has more critical points and suitable intersections of
stable/unstable manifolds are empty.  This is made precise in Theorem~\ref{thm:egrt} below.

\subsection{The embedded Morse--Smale condition}\label{section:morse-smale-condition}

In the following we write $M^{s}(z)$ (respectively $M^u(z)$), to denote the descending and ascending membranes of the critical point $z$.
We write $W^s(z)$ and $W^u(z)$ to denote the stable and the unstable manifolds of $\xi|_\Omega$, with $W^s(z),W^u(z)\subset\O$.
If $z$ is a boundary critical point, we denote by $W^s_Y(z)$ and $W^u_Y(z)$, respectively the stable and unstable manifold of the vector field $\xi$
restricted to $Y$.

\begin{definition}\label{def:emsc}
The vector field $\xi$ satisfies the \emph{embedded Morse--Smale conditions} if for any two critical points $z_1$ and $z_2$ of $f$, the intersections
of $M^s(z_1)$ with $M^u(z_2)$ are transverse in $Z\times[0,1]\sm\O$, the intersections of $W^s(z_1)$ and $W^u(z_2)$ are transverse
in $\O\sm Y$ and the intersections of $W^s_Y(z_1)$ with $W^u_Y(z_2)$ are transverse in $Y$.
\end{definition}

\begin{lemma}
For every embedded gradient-like vector field $\xi$ there exists a $C^2$-small perturbation $\xi'$ which satisfies
the embedded Morse--Smale condition.
\end{lemma}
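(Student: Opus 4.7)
The plan is to achieve the three transversality conditions in Definition~\ref{def:emsc} in three successive stages, each involving a $C^2$-small perturbation supported away from the critical points of $f$, so that the three conditions of Definition~\ref{def:eglv} are preserved throughout. Near each critical point $z$, condition~(c) fixes $\xi$ entirely in a small neighbourhood $U_z$; we therefore keep $\xi$ equal to its original form on slightly smaller neighbourhoods $U_z' \Subset U_z$, and only perturb on the complement. Positivity $dF(\xi)>0$ and tangency to $\Omega$ and $Y$ are open conditions, so they survive any sufficiently $C^0$-small change whose support avoids the critical set; we will build perturbations that maintain tangency to $Y$ and $\Omega$ by construction.

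\textbf{Stage 1: transversality of $W^s_Y(z_1)$ and $W^u_Y(z_2)$ in $Y$.} The restriction $\xi|_Y$ is a gradient-like vector field for the Morse function $f|_Y$ (Lemma~\ref{lem:tang} and Definition~\ref{def:eglv}). The classical Kupka--Smale theorem for gradient-like vector fields on manifolds with boundary (see for instance \cite{Mi2}) furnishes a $C^2$-small perturbation $\eta_Y$ of $\xi|_Y$, supported away from the critical points, such that the stable and unstable manifolds inside $Y$ become pairwise transverse. Extend this perturbation to a vector field $\eta_1$ on $Z\times[0,1]$ tangent to $\Omega$ and to $Y$ by using a tubular neighbourhood of $Y$ inside $\Omega$ and then a further tubular neighbourhood of $\Omega$ inside $Z\times[0,1]$, cutting off with bump functions outside the supports. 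The resulting $\xi_1 := \xi + \eta_1$ satisfies (a),(b),(c) of Definition~\ref{def:eglv} and condition~(3) of Definition~\ref{def:emsc}.

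\textbf{Stage 2: transversality of $W^s(z_1)$ and $W^u(z_2)$ in $\Omega\sm Y$.} Apply the Kupka--Smale argument a second time, this time to $\xi_1|_\Omega$, choosing the perturbation $\eta_\Omega$ to be compactly supported in $\Omega\sm(Y\cup\bigcup U_z')$. Because the support avoids $Y$, the transversality achieved in Stage~1 is not disturbed; because the support avoids the critical points, (c) is preserved. Extend $\eta_\Omega$ to a vector field $\eta_2$ on $Z\times[0,1]$ tangent to $\Omega$, supported in a tubular neighbourhood of $\Omega\sm Y$ disjoint from $Y$, and set $\xi_2 := \xi_1 + \eta_2$.

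\textbf{Stage 3: transversality of $M^s(z_1)$ and $M^u(z_2)$ in $Z\times[0,1]\sm\Omega$.} Now apply a parametric transversality argument in the ambient manifold. Outside $\Omega\cup\bigcup U_z'$ the vector field $\xi_2$ has no critical points, and the membranes $M^u(z)$, $M^s(z)$ are smooth embedded submanifolds whose traces on a level set of $F$ near $f(z)$ are prescribed. The Kupka--Smale theorem for flows without fixed points in the relevant region (applied along transversal sections, exactly as in \cite[\S 4]{Mi2}) produces a $C^2$-small perturbation $\eta_3$ supported in $Z\times[0,1]\sm(\Omega\cup\bigcup U_z')$ making all pairs $M^s(z_1), M^u(z_2)$ transverse off $\Omega$. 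Since $\eta_3$ vanishes on $\Omega$, Stages~1 and~2 are preserved. Setting $\xi' := \xi_2 + \eta_3$ gives the required vector field.

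\textbf{Main obstacle.} The delicate point is not the existence of each individual perturbation -- each follows from a standard Kupka--Smale type statement -- but the bookkeeping needed to ensure that successive stages do not destroy earlier transversality, and that at every step the perturbation remains compatible with the rigid local structure of Definition~\ref{def:eglv}(c) and with tangency to both $\Omega$ and $Y$. This is handled by arranging each perturbation to be compactly supported in the complement of all previously used regions (the critical neighbourhoods, then $Y$, then $\Omega$), and by using tubular neighbourhood projections to extend tangent perturbations outward while preserving tangency.
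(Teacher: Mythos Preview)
Your three-stage stratified Kupka--Smale argument is correct in outline: at each stage the relevant intersections consist of trajectories lying entirely in the open stratum under consideration ($Y$, then $\Omega\setminus Y$, then $Z\times[0,1]\setminus\Omega$), so perturbing the vector field at a transverse section to such a trajectory, with support in that stratum and away from the critical coordinate charts, suffices to achieve transversality without disturbing the earlier stages or the local normal form~(c). One small remark: your citation of \cite{Mi2} for a Kupka--Smale theorem is a little misleading, since Milnor does not phrase things that way; he achieves Morse--Smale conditions by isotoping level sets rather than by directly perturbing the vector field.

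This is in fact the point of departure from the paper's own (sketched) proof. The paper appeals to the Embedded Isotopy Lemma (Lemma~\ref{lem:eil}): one chooses a noncritical level $c$ between two critical values, looks at the traces of the membranes and (un)stable manifolds on the triple $\bigl(F^{-1}(c),\,\Omega\cap F^{-1}(c),\,Y\cap F^{-1}(c)\bigr)$, finds an isotopy of this triple making those traces transverse, and then uses Lemma~\ref{lem:eil} to realise that isotopy by a change of~$\xi$. This automatically handles all three transversality conditions at once and respects the stratification $Y\subset\Omega\subset Z\times[0,1]$ by construction, since the isotopy preserves the triple. Your approach instead perturbs~$\xi$ directly, stratum by stratum; it is slightly more elementary in that it does not require the isotopy lemma, but it requires the bookkeeping you describe in your final paragraph, whereas the paper's route packages that bookkeeping into a single isotopy of a stratified level set. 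Both arguments are standard and valid; the paper's is more in keeping with the tools it has set up.
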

\begin{proof}[Sketch of proof]
This is a standard result combining the fact that the transversality condition is open (see \cite[Section 29]{Arn})
together with Lemma~\ref{lem:eil}. We leave the details to the reader.
\end{proof}

We show, in table form, the dimensions of stable and unstable manifolds of a critical point. In Table~\ref{tab:one}, we
assume that $z$ is a critical point of $\O$ of index $k$, and we recall that $\dim \O = n+1$. Also recall that the index of a critical point $z$ is the dimension of its stable manifold $W^s(z)$.


\begin{table}[t]
\begin{center}
\small
\begin{tabular}[t]{lcccccc}
\toprule
Type of $z$ & $M^s\sm\O$ & $M^u\sm\O$ & $W^s\sm Y$ & $W^u\sm Y$ & $W^s_Y$ & $W^u_Y$ \\
\midrule
interior & $k+1$ & $n+2-k$ & $k$ & $n+1-k$ & $\emptyset$ & $\emptyset$ \\
bdy. stable & $k+1$ & $n+2-k$ & $k$ & $\emptyset$ & $k-1$ & $n+1-k$ \\
bdy. unstable & $k+1$ & $n+2-k$ & $\emptyset$ & $n+1-k$ & $k$ & $n-k$\\
\bottomrule
\end{tabular}%
\end{center}
\medskip
\caption{Dimensions of various stable and unstable manifolds. Here $\emptyset$ means that the corresponding manifold is empty, as does dimension $-1$.}\label{tab:one}
\end{table}

We remark that the intersection of the stable manifold of one point with the unstable manifold of another, unless empty, must have dimension at least one.
Therefore, the embedded Morse--Smale condition (Definition~\ref{def:emsc}) yields the following result.

\begin{proposition}\label{prop:ems}
Let $z$ and $w$ be two critical points of $f$ of indices $k$ and $l$ respectively. Let $m := \dim Z$. Suppose that $\xi$ satisfies the embedded Morse--Smale condition.  Then the
intersection $M^u(z)\cap M^s(w)$ is empty if at least one of the following conditions hold:
\begin{itemize}
\item[$\bullet$] $k=l$, $m\geqslant n+2$  and either $z$ is not a boundary stable critical point or $w$ is not
a boundary unstable critical point;
\item[$\bullet$] $k>l$ and $m\geqslant n+1$;
\item[$\bullet$] $z$ is an interior critical point, $w$ is boundary unstable and $l-k\leqslant m-n-2$;
\item[$\bullet$] $z$ is a boundary stable critical point, $w$ is interior and $l-k\leqslant m-n-2$.
\end{itemize}
\end{proposition}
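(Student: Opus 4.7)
The plan is to stratify the intersection $M^u(z)\cap M^s(w)$ according to the decomposition of $Z\times[0,1]$ into the three pieces $Z\times[0,1]\setminus\Omega$, $\Omega\setminus Y$, and $Y$, and to argue separately on each stratum using the embedded Morse--Smale condition together with a dimension count drawn from Table~\ref{tab:one}. The single non-tautological input is that both ambient membranes are invariant under the flow of $\xi$, so $M^u(z)\cap M^s(w)$ is itself $\xi$-invariant, and the full $\xi$-trajectory through any regular intersection point is contained in the intersection. Since $z\neq w$, and since the only critical point of $\xi$ whose forward trajectory lands at $w$ is $w$ itself, neither $z$ nor $w$ lies in $M^u(z)\cap M^s(w)$; hence every point of the intersection is regular for $\xi$ and the intersection has local dimension at least one at every such point. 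Consequently, on any stratum where the transverse intersection has expected dimension $\leqslant 0$, it must actually be empty, since isolated transverse points would contradict the presence of a one-dimensional trajectory through each of them.

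With this upgrade in hand, the proof reduces to checking that each of the four hypotheses forces the expected dimension to be $\leqslant 0$ on each stratum. On $Z\times[0,1]\setminus\Omega$ the expected dimension of the transverse intersection is
\[
 (n+2-k)+(l+1)-(m+1)=n+l-k+2-m,
\]
and each of the four hypotheses forces this quantity to be $\leqslant 0$ by direct inspection (the first two bullets use $m\geqslant n+2$ respectively $m\geqslant n+1$, and the last two use $l-k\leqslant m-n-2$ directly). On $\Omega\setminus Y$ the relevant intersection is $W^u(z)\cap W^s(w)$; Table~\ref{tab:one} shows that this is automatically empty whenever $z$ is boundary stable or $w$ is boundary unstable, which already handles the third and fourth bullets, while in the remaining type-combinations the expected dimension is $l-k$, which is $\leqslant 0$ under both $k=l$ and $k>l$. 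On $Y$ the intersection $W^u_Y(z)\cap W^s_Y(w)$ is automatically empty whenever $z$ or $w$ is interior, so the third and fourth bullets are again immediate; the four remaining boundary-to-boundary subcases are handled by the dimension counts from Table~\ref{tab:one}, using the index shift of Remark~\ref{remark:dbend-index-shift} for the boundary stable factors.

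The main subtlety, and the reason for the exclusion in the first bullet of the combination ``$z$ boundary stable and $w$ boundary unstable'', is that this is precisely the subcase of $k=l$ in which the above count on $Y$ instead yields
\[
 (n+1-k)+l-n=l-k+1=1,
\]
so neither dimension-counting nor the trajectory argument can force emptiness. This coincides with the well-known obstruction to rearranging an elementary boundary-stable/boundary-unstable pair of equal index, so its appearance as an exclusion is the expected phenomenon rather than a defect of the argument. Apart from identifying this excluded configuration, all that remains is routine bookkeeping across the nine ordered (type of $z$, type of $w$) combinations on each of the three strata.
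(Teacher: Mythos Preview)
Your proof is correct and follows essentially the same approach as the paper's: stratify by $Z\times[0,1]\setminus\Omega$, $\Omega\setminus Y$, and $Y$, then check on each stratum that the sum of the dimensions from Table~\ref{tab:one} is at most the ambient dimension, using that a nonempty intersection must have dimension at least one by flow-invariance. The paper's own proof is a single sentence listing the three inequalities $\dim (M^u(z)\setminus\Omega) + \dim (M^s(w)\setminus\Omega) \leqslant m+1$, $\dim (W^u(z)\setminus Y) + \dim (W^s(w)\setminus Y) \leqslant n+1$, and $\dim W^u_Y(z) + \dim W^s_Y(w) \leqslant n$ to be verified from the table, the ``dimension at least one'' fact having been remarked just before the statement; you have simply spelled this out more explicitly, including the identification of the excluded case in the first bullet.
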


\begin{proof}
In each case, the proof follows by checking that each of \[\dim (M^u(z)\sm\O) + \dim (M^s(w)\sm\O)  \leqslant m+1,\]
\[\dim (W^u(z)\sm Y) + \dim (W^s(w)\sm Y) \leqslant n+1\]
and
\[\dim W^u_Y(z) + \dim W^s_Y(w)  \leqslant n\]
are satisfied.
\end{proof}

\subsection{The embedded global rearrangement theorem}\label{section:global-rearrangement}

As a corollary of Proposition~\ref{prop:ems}, we obtain the following global rearrangement theorem. In codimension 2 or more, as in Theorem~\ref{thm:egrt}, this is the standard rearrangement theorem.  In codimension~1 the situation is more complicated and will be addressed in Theorem~\ref{prop:egrt1}.

The next definition was already given in the introduction; for the convenience of the reader we recall it here.

\begin{definition}\label{defn:admissible-config}
  A \emph{configuration} $\Xi$ of the critical points of an embedded cobordism $(Z,\O)$ is an assignment of a value $\Xi(z_i) \in (0,1)$ to each critical point $z_i$ of~$f$.

  An \emph{admissible configuration} is a configuration satisfying the following conditions:
\begin{itemize}
\item[(A1)] if $z,w$ are critical points with indices $k$, $l$ with $k<l$, then $\Xi(z)<\Xi(w)$;
\item[(A2)] if $z,w$ have the same index $k$ and if $z$ is boundary stable and $w$ is boundary unstable, then $\Xi(z)<\Xi(w)$.
\end{itemize}
\end{definition}

The Global Rearrangement Theorem says that any admissible configuration can be realised by changing the Morse function.

\begin{theorem}[Global Rearrangement Theorem]\label{thm:egrt}
Suppose $(Z,\O)$ is a cobordism.  Given an admissible configuration $\Xi$ of the critical points of $f$,
if $\codim(\O\subset Z\times[0,1])\geqslant 2$, there exists a function $G\colon Z\times[0,1]\to[0,1]$ without critical points, homotopic through submersions to $F$, which restricts
to a Morse function $g\colon\O\to[0,1]$, such that $g$ restricted to the critical points agrees with~$\Xi$ (the type and index of each critical point is preserved).
\end{theorem}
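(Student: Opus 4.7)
The strategy is to reduce the theorem to iterated applications of the Elementary Rearrangement Theorem~\ref{thm:ert}, using Proposition~\ref{prop:ems} to check the hypotheses, followed by a simple reparametrisation of the height coordinate to hit the exact prescribed values. Before starting, I invoke the lemma after Definition~\ref{def:emsc} to perturb $\xi$ by a $C^2$-small perturbation into Morse--Smale position; this leaves the critical points, their types, and their indices unchanged. Since $\codim(\O \subset Z\times[0,1])\geqslant 2$ means $m\geqslant n+2$, Proposition~\ref{prop:ems} applies in full strength: for any pair of critical points $z$ (index $k$) and $w$ (index $l$) whose current ordering $f(z)<f(w)$ is opposite to the $\Xi$-ordering, either (A1) is violated, so $k>l$, or (A2) is violated, so $k=l$ with $z$ boundary unstable and $w$ boundary stable. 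The first bullet of Proposition~\ref{prop:ems} handles the latter case (the hypothesis ``$z$ not boundary stable'' holds), and the second bullet handles the former. In both cases $M^u(z)\cap M^s(w)=\emptyset$, which is exactly the hypothesis of Theorem~\ref{thm:ert}.

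\textbf{Bubble sort.} Now I order the critical points by bubble sort. At each stage, choose any pair $z, w$ of critical points of $f$ that are adjacent in the ordering induced by $f$, i.e.\ $f(z)<f(w)$ with no critical value in between, such that $\Xi(z)>\Xi(w)$. Pick regular values $c,d$ of $f$ with $c<f(z)<f(w)<d$ and $[c,d]$ containing no other critical values. Then $(Z,\O\cap F^{-1}([c,d]))$, after reparametrising $[c,d]$ to $[0,1]$, is an embedded cobordism with exactly two critical points, and the previous paragraph shows the Morse--Smale-perturbed vector field satisfies the vanishing membrane intersection hypothesis of Theorem~\ref{thm:ert} on this sub-cobordism (since membranes only shrink under restriction to a sub-cobordism). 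Applying Theorem~\ref{thm:ert} with chosen target values $a,b$ inside $(c,d)$ implementing the swap produces a new function which, by the construction $G=\Psi(F,\mu)$ with $\Psi(x,y)=x$ for $x$ near $0$ or $1$, agrees with $F$ near the ends of $[c,d]$ and hence glues back to the unchanged $F$ outside. By (E3) the types and indices of the two critical points are preserved. The number of $\Xi$-inversions strictly decreases, so after finitely many steps the ordering of critical values of $f$ agrees with $\Xi$.

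\textbf{Exact values and the submersion homotopy.} Once the orderings agree, choose any orientation-preserving diffeomorphism $\phi\colon[0,1]\to[0,1]$ with $\phi(f(z_i))=\Xi(z_i)$ for each critical point $z_i$; such a $\phi$ exists precisely because the orderings agree. Define $G:=\phi\circ F$. Since $F$ has no critical points and $\phi'>0$, neither does $G$; the restriction $g=G|_\O=\phi\circ f$ is Morse with the same critical points, types, and indices as $f$ (the Hessian is rescaled by the positive scalar $\phi'(f(z_i))$ at each critical point), and $g(z_i)=\Xi(z_i)$ by construction. Finally, the homotopy through submersions is obtained by concatenating: each application of Theorem~\ref{thm:ert} gives such a homotopy by the remark following its statement, and the reparametrisation step is joined to $F$ by $G_t=\phi_t\circ F$ with $\phi_t$ an isotopy of orientation-preserving diffeomorphisms from $\mathrm{id}$ to $\phi$. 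The main obstacle is verifying that every swap forced by $\Xi$ has empty membrane intersection; this is exactly what the codimension $\geqslant 2$ hypothesis buys, and is why the codimension one case will require the separate treatment of Theorem~\ref{prop:egrt1}.
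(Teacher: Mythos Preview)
Your proof is correct and follows essentially the same approach as the paper: iterate Theorem~\ref{thm:ert}, using Proposition~\ref{prop:ems} to verify the membrane-disjointness hypothesis at each step. The paper's own proof is two sentences long and leaves the bubble-sort mechanism, the exact-value reparametrisation, and the submersion homotopy implicit; you have filled these in carefully.

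One small correction: in the $k=l$ case you write ``$z$ boundary unstable and $w$ boundary stable''. This is too strong. Admissibility of $\Xi$ together with $\Xi(z)>\Xi(w)$ only gives you the negation of (A2)'s hypothesis, namely that it is \emph{not} the case that $z$ is boundary stable and $w$ is boundary unstable; equivalently, $z$ is not boundary stable \emph{or} $w$ is not boundary unstable. Fortunately this disjunction is exactly the hypothesis of the first bullet of Proposition~\ref{prop:ems}, so your conclusion stands---only the justification needs this adjustment. (For instance, $z$ and $w$ could both be interior, or both boundary stable, and your swap still goes through.)
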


\begin{remark}
If $z,w$ have the same index $k$, both critical
points are boundary stable, both are boundary unstable or at least one of them is interior, then we can have $g(z)<g(w)$, $g(z)=g(w)$ or $g(z)>g(w)$, as we please, provided condition (A2) is satisfied.

For example, if we have three critical points $z$, $w$ and $v$ of index $k$, where $z$ is boundary stable
and $w$ is boundary unstable and $v$ is interior, then in general we cannot arrange that $g(z)>g(v)$ and $g(v)>g(w)$ simultaneously, since this would violate (A2).
\end{remark}

\begin{proof}[Proof of Theorem~\ref{thm:egrt}]
First apply Theorem~\ref{thm:ert} to arrange the critical points to satisfy (A1): by the second bullet point of Proposition~\ref{prop:ems} this is always possible.  Now critical points of the same index can be arranged into any chosen order that satisfies (A2), by the first bullet point of Proposition~\ref{prop:ems} and further applications of Theorem~\ref{thm:ert}.
\end{proof}

The conclusions of Theorem~\ref{thm:egrt} do not hold in codimension~$1$, since critical points of the same index cannot in general be rearranged.
Instead we have the following weaker result.

\begin{theorem}\label{prop:egrt1}
Suppose that $Z,\O$ and $f$ are as in Theorem~\ref{thm:egrt}, but $\codim(\O\subset Z\times[0,1])=1$.
Then
there exists a function $G\colon Z \times [0,1] \to [0,1]$ without critical points, which restricts to a Morse function $g\colon\O\to[0,1]$ having the same critical points as $f$, but the critical values satisfy:
\begin{itemize}
\item[(A1')] if $z,w$ are critical points with indices $k,l$ with $k<l$, then $g(z)<g(w)$.
\end{itemize}
\end{theorem}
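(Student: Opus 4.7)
The plan is to mimic the proof of Theorem~\ref{thm:egrt}, using only those portions of Proposition~\ref{prop:ems} that remain valid in codimension one. Since $\codim(\O\subset Z\times[0,1])=1$ means $m=\dim Z=n+1$, the second bullet of Proposition~\ref{prop:ems} still applies: for any pair of critical points $z,w$ of $f$ with $\ind(z)>\ind(w)$, we have $M^u(z)\cap M^s(w)=\emptyset$. This is precisely the hypothesis needed to invoke the Elementary Rearrangement Theorem~\ref{thm:ert} on the pair $z,w$. (Although Theorem~\ref{thm:ert} is stated for a Morse function with only two critical points, the rearrangement procedure can be localised to a neighbourhood of the trajectories joining $z$ and $w$, leaving the remaining critical values unchanged, following the standard argument of \cite[Section~4]{Mi2}.)

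First I would perform a bubble sort on the critical values. So long as there exist critical points $z,w$ with $f(z)<f(w)$ but $\ind(z)>\ind(w)$, apply Theorem~\ref{thm:ert} to interchange their critical values. The number of inversions strictly decreases with each such step, so the process terminates after finitely many iterations. At every step, Theorem~\ref{thm:ert} produces a new function $G$ without critical points on $Z\times[0,1]$ whose restriction $g=G|_\O$ is Morse with the same critical points, indices, and types as~$f$. After termination, $g$ satisfies~(A1').

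The main obstacle---and the reason condition~(A2) must be dropped in the codimension-one statement---is that the first bullet of Proposition~\ref{prop:ems}, which governs critical points of the same index, requires $m\geqslant n+2$ and therefore fails when $m=n+1$. Consequently the membranes of a boundary-stable critical point and of a boundary-unstable critical point of the same index can meet nontrivially, so the hypothesis $M^u(z)\cap M^s(w)=\emptyset$ of Theorem~\ref{thm:ert} can fail for such pairs. Hence within a given index class the sorting above cannot be refined to place boundary-stable before boundary-unstable points, which accounts for the exact gap between (A1') and the stronger admissibility condition of Theorem~\ref{thm:egrt}.
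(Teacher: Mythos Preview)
Your proposal is correct and matches the paper's (implicit) approach. The paper does not spell out a separate proof of Theorem~\ref{prop:egrt1}; it is left as an immediate consequence of the proof of Theorem~\ref{thm:egrt}, whose first sentence reads ``First apply Theorem~\ref{thm:ert} to arrange the critical points to satisfy (A1): by the second bullet point of Proposition~\ref{prop:ems} this is always possible.'' Since that second bullet requires only $m\geqslant n+1$, this step goes through unchanged in codimension one and yields exactly~(A1'), while the second step (using the first bullet, which needs $m\geqslant n+2$) is unavailable---precisely your observation about why~(A2) must be dropped.
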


\section{Cancellation of critical points}\label{section:cancellation}

In the absolute case, two critical points of indices $k$ and $k+1$ can be cancelled if there is a single trajectory of a Morse--Smale gradient-like
vector field between them. The situation is slightly more complicated in the embedded case.  We present a result which is stated in \cite[Lemma~2.9]{Pe} and \cite[Lemma~5]{Ro}.  For the convenience of the reader we sketch the proof.  The proof also makes crucial use of Vector Field Integration Lemma~\ref{lem:integrate}.

Note that if we have critical points $z,w$ with indices $k,k+1$ respectively, then we can assume that $g(z) < g(w)$, by Theorem~\ref{thm:egrt} in the case that the codimension is~$2$ or more, and Theorem~\ref{prop:egrt1} in the codimension~1 case.

\subsection{The embedded elementary cancellation theorem}\label{section:elem-cancel-thm}

\begin{theorem}[Elementary Cancellation Theorem]\label{lem:eec}
Let $(Z,\O)$ be a cobordism. Let $z$ and $w$ be critical points of $f$ of indices $k$ and $k+1$, of the same type (i.e. either both interior,
or both boundary stable, or both boundary unstable).
Suppose that $\xi$ is an embedded gradient-like vector field, which is Morse--Smale (Definition~\ref{def:emsc}), and that there exists
a \emph{single trajectory} $\gamma$ of $\xi$ connecting $z$ with $w$. If $z$, $w$ are interior critical points, we require that $\gamma\subset\O$.  If $z$, $w$ are boundary critical points, then we require that $\gamma\subset Y$. Furthermore, suppose that there are no broken trajectories between $z$ and $w$.

Then, for any neighbourhood $U$ of $\gamma$, there
exists a vector field $\xi'$ on $Z\times[0,1]$, agreeing with $\xi$ away from $U$, non-vanishing on $U$, and a function
$F'\colon Z\times[0,1]\to[0,1]$ such that $f'=F'|_\O$ has the same critical points as $f$ with the exception of $z$ and $w$,
which are regular points of $f'$, and such that $\xi'$ is an embedded gradient-like vector field with respect to $F'$.
\end{theorem}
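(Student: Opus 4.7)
The plan is to adapt Milnor's proof of the absolute First Cancellation Theorem \cite[Section 5]{Mi2}, with two embedded refinements: the cancellation vector field must remain tangent to $\O$ and to $Y$, and the new function $F'$ will be recovered via Vector Field Integration Lemma~\ref{lem:integrate} rather than built by hand.

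First I would put the flow into a normal form. Definition~\ref{def:eglv} supplies local coordinates $(x_1,\dots,x_{n+1},y_1,\dots,y_{m-n})$ near both $z$ and $w$ in which $\xi$ has the explicit form~\eqref{eq:c1}. Since $z$ and $w$ have the same type these coordinate descriptions are compatible, and the $y$-block $(y_1^2+\dots+y_{m-n}^2,0,\dots,0)$ vanishes exactly on $\O$. The Morse--Smale hypothesis together with the single-trajectory assumption imply that the appropriate ascending and descending membranes meet transversely along~$\gamma$, placing us in the setting of Milnor's classical cancellation argument. Using the Embedded Isotopy Lemma~\ref{lem:eil} I can then adjust $\xi$ along $\gamma$ so that on a tubular neighbourhood $U$ of $\gamma$ it splits as a product of Milnor's standard ``connecting-orbit'' model on the $x$-variables (lying inside $\O$ in the interior case, inside $Y$ and respecting the half-space $\{x_1\geqslant 0\}$ in the boundary case) and the quadratic $y$-component above.

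Next, I perform the cancellation locally. Milnor's explicit construction provides a vector field $\widetilde{\eta}$ on the $x$-block which is nowhere vanishing, agrees with the original hyperbolic field near $\p U$, and preserves the relevant coordinate half-spaces. Define $\xi'$ to equal $\widetilde{\eta}$ on the $x$-block and retain the quadratic $y$-component, patching to $\xi$ outside $U$ via a smooth cutoff supported in a neighbourhood of $\O$. The resulting $\xi'$ coincides with $\xi$ outside $U$; it is tangent to $\O$ (because the $y$-component vanishes on $\O$) and to $Y$ (because the model preserves $\{x_1=0\}$); and it is non-vanishing on $U$ since $\widetilde{\eta}$ is non-vanishing on the $x$-block while the $y$-component is nonzero wherever $y\neq 0$. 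The critical points of $\xi'|_\O$ are exactly those of $\xi$ other than $z$ and $w$, each inheriting the local description of Definition~\ref{def:eglv}, so $\xi'$ is almost gradient-like in the sense of Definition~\ref{def:AG}. To extract $F'$ I would then apply Lemma~\ref{lem:integrate}: its one nontrivial hypothesis is the absence of broken trajectories of $\xi'$ starting and ending at the same critical point. Any such orbit would be forced to traverse $U$, because off $U$ the field equals the Morse--Smale $\xi$, which admits none. But inside $U$ the modified field has no zeros, and each remaining stable/unstable manifold meets $\p U$ transversely in the same way as for $\xi$, so a closed broken orbit of $\xi'$ would induce one for $\xi$, contradicting the Morse--Smale condition.

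I expect the main technical obstacle to be arranging the local cancellation $\widetilde{\eta}$ so that it simultaneously preserves tangency to $\O$, tangency to $Y$ in the boundary-critical case, and the standard behaviour near $\p U$, all while remaining nowhere vanishing. The fact that the ambient $y$-block of $\xi$ already points uniformly away from $\O$ in the $+y_1$ direction is what allows the normal extension of Milnor's $x$-block construction without producing new zeros, and the same feature ensures that the final non-vanishing check and the no-closed-broken-orbit verification reduce cleanly to their absolute analogues.
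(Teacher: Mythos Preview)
Your overall architecture matches the paper's: put $\xi$ into a product normal form along $\gamma$ via the Embedded Isotopy Lemma, perform Milnor's local cancellation on the $x$-block while leaving the $y$-block untouched, then recover $F'$ from the Vector Field Integration Lemma~\ref{lem:integrate}. That is exactly how the paper proceeds.

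The gap is in your verification of the hypotheses of Lemma~\ref{lem:integrate}. Two points:

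\emph{First}, you assert that $\xi'$ is almost gradient-like but do not check condition~(AG4). After the modification, a trajectory of $\xi'$ could in principle wander inside $U$ indefinitely; you need to show that every trajectory through a point of $U$ exits $U$ in finite forward and backward time. The paper isolates this as a separate lemma (the analogue of Milnor's Assertion~2).

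\emph{Second}, your argument that a closed broken $\xi'$-trajectory ``would induce one for $\xi$'' is not valid as stated. Outside $U$ we have $\xi'=\xi$, but a $\xi'$-trajectory might exit $U$, follow $\xi$ for a while, and then \emph{re-enter} $U$; pieces of such an orbit need not assemble into any broken $\xi$-trajectory, because the portions inside $U$ are governed by the modified field. The correct mechanism is a \emph{no-return} lemma: for a suitable pair $U_2\subset U_1$ of neighbourhoods of $\gamma$, any trajectory that starts in $U_2$ and leaves $U_1$ can never return to $U_2$. This is where the hypothesis ``no broken trajectories between $z$ and $w$'' is actually consumed --- a sequence of returning trajectories would subconverge to either a second $z$-to-$w$ trajectory or a broken one. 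You invoke that hypothesis only for transversality along $\gamma$, but its real role is precisely here. Once no-return is in hand, the modification is supported in $U_2$, so a $\xi'$-trajectory that leaves $U_1$ is thereafter a $\xi$-trajectory and never comes back; combined with the first point this gives (AG4) and excludes circular broken trajectories, so Lemma~\ref{lem:integrate} applies.
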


\begin{remark}
\leavevmode\Nopagebreak
 \begin{enumerate}
  \item In particular note that the assumptions on a single trajectory and the lack of broken trajectories imply that the intersections of the interiors of the membranes $\Int M^u(z) \cap \Int M^s(w)$ is empty.  In codimension~$3$ or more, such disjointness can always be arranged by general position and Table~\ref{tab:one}: $(n+2-k) + (k+1+1) = n+4 \leqslant m+1$ when $m \geqslant n+3$.  This can be viewed as the main reason why ``concordance implies isotopy''
in codimension~ $3$ or more~\cite{Hudson}.  When the codimension is 1 or 2 there are obstructions from membrane intersections.
  \item Unlike Milnor \cite[Theorem 5.4]{Mi2}, we do not assume that $z$ and $w$ are the only critical points in $f^{-1}[f(z),f(w)]$.  The assumption is replaced by the lack of broken trajectories. The statement is equivalent to that of Milnor in the absolute case, or in the embedded case with codimension 2 or more, since the critical points can be rearranged to achieve Milnor's assumption. It can be shown that it is also equivalent in the case of codimension 1, but since rearrangement of critical points of the same index is in general not possible in codimension 1, it is nice to be able to separate rearrangement and cancellation.
\item The new Morse function $F' \colon Z \times [0,1] \to [0,1]$ can be chosen so as to be homotopic through submersions to $F$.
\end{enumerate}
\end{remark}

\begin{proof}[Proof of Theorem~\ref{lem:eec}]
Milnor's approach works with a few modifications to adapt it to the embedded case and the possibility of additional critical points between $z$ and $w$.
The proof of Theorem~\ref{lem:eec} proceeds by way of Lemmas~\ref{lem:noreturn}, \ref{lem:milnor-ass6} and~\ref{lem:leaves},
and will take the remainder of this section.

\begin{lemma}[c.f.\ \expandafter{\cite[Assertion 1, page 50]{Mi2}}]\label{lem:noreturn}
Let $\gamma$ be as in Theorem~\ref{lem:eec}.  For any open set $U_1\subset Z\times[0,1]$, such that $\gamma\subset U_1$, there exists another open set $U_2$ with $\gamma\subset U_2\subset U_1$
such that any trajectory of $\xi$ which starts in $U_2$ and leads out of $U_1$ never goes back to $U_2$.
\end{lemma}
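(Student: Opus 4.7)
The plan is to argue by contradiction, in the spirit of Milnor's Assertion~1 on page~50 of \cite{Mi2}, adapted to the embedded almost-gradient setting.  Suppose the conclusion fails.  Then there exists a nested sequence of open neighbourhoods $U_2^{(r)} \subset U_1$ of $\gamma$ shrinking to $\overline{\gamma} := \gamma \cup \{z, w\}$, together with trajectories $\sigma_r$ of $\xi$ such that $\sigma_r(0) \in U_2^{(r)}$, a first exit time $t_r > 0$ satisfies $p_r := \sigma_r(t_r) \in \partial U_1$, and a return time $s_r > t_r$ satisfies $\sigma_r(s_r) \in U_2^{(r)}$.  After shrinking $U_1$ we may assume $\overline{U_1}$ is compact and disjoint from $Z \times \{0,1\}$.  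First I would verify that both $t_r \to \infty$ and $s_r - t_r \to \infty$: otherwise continuous dependence on initial conditions would force a subsequential limit of $\sigma_r|_{[0,t_r]}$ to be a bounded-length trajectory segment based at a point of the $\xi$-invariant arc $\overline{\gamma} \subset \Int U_1$, contradicting $p_r \in \partial U_1$.

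Next, by compactness, pass to a subsequence so that $p_r \to p_\infty \in \partial U_1$, and let $\sigma_\infty$ be the trajectory of $\xi$ through $p_\infty$.  Because $F$ is strictly monotone along trajectories by condition (a) of Definition~\ref{def:eglv}, and $F(\sigma_r(-t_r))$ is bounded away from both $0$ and $1$ uniformly in $r$ (since $\sigma_r(-t_r) \in U_2^{(r)}$ approaches $\overline{\gamma}$, on which $F$ takes values in $[F(z),F(w)] \subset (0,1)$), the trajectory $\sigma_\infty$ never reaches $Z \times \{0,1\}$, so (AG4) furnishes well-defined critical-point limits $c_\pm := \lim_{t\to\pm\infty}\sigma_\infty(t)$.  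The key step is to invoke the standard broken-trajectory compactness for gradient-like flows---justified here by the local normal form~\eqref{eq:c1} from (AG3) together with (AG4)---to obtain, after a further subsequence, limiting broken trajectories $\tilde\Sigma^-$ from a point $q_- \in \overline{\gamma}$ to $p_\infty$ (as a limit of $\sigma_r|_{[0,t_r]}$, whose final leg has $\alpha$-limit $c_-$), and $\tilde\Sigma^+$ from $p_\infty$ to some $q_+ \in \overline{\gamma}$ (as a limit of $\sigma_r|_{[t_r,s_r]}$, whose first leg has $\omega$-limit $c_+$).  Since the maximal trajectory through any point of $\overline{\gamma}$ has $\alpha$-limit $z$ and $\omega$-limit $w$, by prepending to $\tilde\Sigma^-$ (respectively appending to $\tilde\Sigma^+$) the appropriate sub-arc of $\gamma$, I may regard $\tilde\Sigma^-$ as a broken trajectory beginning at a point of $\{z,w\}$ and $\tilde\Sigma^+$ as a broken trajectory ending at a point of $\{z,w\}$.

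Concatenating $\tilde\Sigma^-$, $\sigma_\infty$, and $\tilde\Sigma^+$ produces a broken trajectory $\tilde\Gamma$ between points of $\{z,w\}$ which passes through $p_\infty$.  Since $F$ strictly increases along every non-constant piece and $F(z) < F(w)$, the initial endpoint must be $z$ and the terminal endpoint must be $w$.  But $p_\infty \in \partial U_1$ lies off $\overline{\gamma} \subset \Int U_1$, so $\tilde\Gamma \neq \gamma$, contradicting the hypothesis of Theorem~\ref{lem:eec} that there are no broken trajectories between $z$ and $w$ apart from $\gamma$.  The principal obstacle in the plan is the broken-trajectory compactness step: one must track $\sigma_r$ during its long passages near intermediate critical points using only the weak assumptions (AG1)--(AG4), mimicking the standard Morse-theoretic compactness without appealing to an auxiliary Morse function; here the local model~\eqref{eq:c1} is essential, since it controls the rate at which trajectories enter and exit neighbourhoods of critical points.
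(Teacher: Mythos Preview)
Your proposal is correct and follows essentially the same route as the paper: assume the lemma fails, extract a sequence of offending trajectories with a point on $\partial U_1$, pass to a subsequence, and invoke broken-trajectory compactness to produce a broken trajectory from $z$ to $w$ through a point off $\gamma$, contradicting the hypotheses of Theorem~\ref{lem:eec}.  Two small remarks: first, the paper streamlines your endgame by sliding the interior points $w_r^1, w_r^3$ along $\gamma_r$ so that they converge directly to $z$ and $w$ (rather than to arbitrary points of $\overline\gamma$ followed by appending sub-arcs of $\gamma$); second, your closing worry about justifying compactness ``without appealing to an auxiliary Morse function'' is misplaced here, since by hypothesis $\xi$ is an embedded gradient-like vector field for $F$, so $F$ is available and strictly increasing along trajectories---the paper simply cites a reference for the compactness.  (Also, your expression $\sigma_r(-t_r)$ appears to be a slip for $\sigma_r(0)$, or an implicit reparametrisation centring at $p_r$.)
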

\begin{proof}[Proof of Lemma~\ref{lem:noreturn}]
The proof resembles that of Lemma~\ref{lem:csn} above, hence we do not give all the details.  The idea is that a trajectory which leaves and then returns will imply the existence of another trajectory between $z$ and $w$, or a broken trajectory, both of which are assumed in the hypothesis of Theorem~\ref{lem:eec} not to exist.

Suppose the statement is false. That is, there exists an open set $U_1$ with $\gamma \subset U_1$ such that for all $U_2$ with $\gamma\subset U_2\subset U_1$ there is a trajectory which starts in $U_2$, leaves $U_1$, and then later returns to $U_2$.  Then there exist sequences of points $\{w_r^1\}_{r \geqslant 1}$, $\{w_r^2\}_{r \geqslant 1}$ and $\{w_r^3\}_{r \geqslant 1}$ and a sequence of trajectories $\{\gamma_r\}_{r \geqslant 1}$,
such that $w_r^1$ and $w_r^3$ approach $\gamma$ as $r\to\infty$, $w_r^2\in\p (\cl(U_1))$ and $\gamma_r$ is a trajectory going first through $w_r^1$, then through $w_r^2$ and finally through $w_r^3$.  By choosing successively smaller open neighbourhoods $U_2^r$ we obtain trajectories $\gamma_r$ from our assumption, and then can choose $w^1_r, w^3_r \in U_2^r \cap \gamma_r$.

Since $\p (\cl{U_1})$ is compact, we may assume that $w_r^2$ converges to a point $w_0$ (as in the proof of Lemma~\ref{lem:integrate} we may need to pass to a subsequence). Furthermore, as $w_r^1$ and $w_r^3$ come very near $\gamma$, we can move $w_r^3$ along $\gamma_r$ to ensure that $w_r^3\to w$. Likewise, we may assume that $w_r^1\to z$.

Consider $\gamma_0$, the trajectory through $w_0$. Since, for any neighbourhood of $z$, there exist $r_0$ such that for all $r>r_0$, $\gamma_r$ enters
this neighbourhood, we would like to claim that $\gamma_0$ starts at $z$. Similarly, we would like to claim that $\gamma_0$ ends at $w$. This is true under the assumption that there are no critical points in $F^{-1}(F(z),F(w))$.

In the general case,
the sequence of trajectories $\gamma_1,\dots,\gamma_r,\dots$ either converges to a trajectory $\gamma_0$ (and then $\gamma_0$ is another trajectory
between $z$ or $w$ by the arguments as above) or to a broken trajectory; see e.g.\ \cite[Corollary 6.23]{Ban}. This broken trajectory is a broken trajectory between
$z$ and $w$, by our assumptions on the limits of $w^1_r$ and $w_3^r$.  This contradicts the assumptions of the Elementary Cancellation Theorem~\ref{lem:eec}, namely that there is a single trajectory joining $z$ to $w$, and no broken trajectories.
\end{proof}

We continue with the proof of Theorem~\ref{lem:eec}.  Let $U_z$ and $U_w$ be open neighbourhoods of $z$ and $w$ respectively, such that the Embedded
Morse Lemma holds for them. Define $U_{zw}$ to be the set of points $x\in Z\times[0,1]\sm (U_z\cup U_w)$ such that the trajectory through $x$ hits $U_z$ in the past and $U_w$ in the future. Shrink $U_{z}$ and $U_w$ if needed, in order to guarantee that $U_{zw},U_z,U_w\subset U$. Indeed, if $V_2$ is defined to be the set $U_2$ given
by Lemma~\ref{lem:noreturn}, with $U_1=U$, and we take $U_z,U_w\subset V_2$, then any trajectory from $U_z$ to $U_w$ must be contained in $U$. The vector field $\xi$ flows from $U_z$ to $U_w$.

The next result is
an analogue of \cite[Assertion 6, page 55]{Mi2}. We state it in the case that $z$ and $w$ are interior critical points; see Remark~\ref{rem:boundary} below
for the boundary case.

\begin{lemma}\label{lem:milnor-ass6}
It is possible to change the vector field $\xi$, inside $U_{zw}$, to a vector field $\xi_1$, which is still a gradient-like vector field for $F$, and
flows from $U_z$ to $U_w$ (that is, a trajectory of $\xi_1$ through a point $p$ in $U_{zw}$ hits $U_z$ in the past and $U_w$ in the future)
such that there is a smaller neighbourhood $U_1\subset \ol{U_z\cup U_w\cup U_{zw}}$ containing $\gamma$ and a coordinate system on $U_1$ given by
$x_1,\dots,x_{n+1},y_1,\dots,y_{m-n}$ such that $z=(0,\dots,0)$, $w=(1,0,\dots,0)$, $\O\cap U_1=\{y_1=\dots=y_{m-n}=0\}$ and
$\xi$ has the following form:
\begin{equation}\label{eq:formofxi}
(v(x_1),-x_2,\dots,-x_{k+1},x_{k+2},\dots,x_{n+1},y_1^2+\dots+y_{m-n}^2,0,\dots,0),
\end{equation}
where $v$ is a smooth function positive on $(0,1)$, negative away from $[0,1]$ and such that $v(x)=x$ (respectively: $v(x)=1-x$) in a neighbourhood of $0$ (respectively: in a neighbourhood of $1$). The coordinate systems on $U$ and $U_z,U_w$ agree (up to shifting $x_1$ by $+1$ on $U_w$).
\end{lemma}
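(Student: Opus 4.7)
The plan is to adapt Milnor's construction from \cite[Assertion 6, page 55]{Mi2} to the embedded setting: glue the Morse-chart coordinates at $z$ and at $w$ into a single chart along $\gamma$, and implement the gluing by modifying $\xi$ on $U_{zw}$ only, via Embedded Isotopy Lemma~\ref{lem:eil}. First I would set up the Morse-lemma coordinates at the two endpoints. Lemma~\ref{lem:eml} together with Definition~\ref{def:eglv} give coordinates on $U_z$ in which $\xi$ takes the standard form, and we have the freedom to apply a linear change to the stable and unstable subspaces. Since $\gamma \subset \O$ escapes $z$ along a definite ray in its unstable manifold, I choose the unstable coordinates so that this ray is the positive $a_{k+1}$-axis; relabelling $a_{k+1}$ as $x_1$ and shifting the remaining indices so that the stable directions become $x_2,\dots,x_{k+1}$ and the other unstable directions become $x_{k+2},\dots,x_{n+1}$, $\xi$ on $U_z$ takes the form $(x_1,-x_2,\dots,-x_{k+1},x_{k+2},\dots,x_{n+1},y_1^2+\dots+y_{m-n}^2,0,\dots,0)$, which matches \eqref{eq:formofxi} with $v(x_1)=x_1$ near $0$. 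An analogous rotation on $U_w$ puts $\gamma$ along the negative $a_1'$-axis at $w$; the substitution $x_1 = 1 + a_1'$ then places $w$ at $(1,0,\dots,0)$ and converts the first component of $\xi$ from $-a_1'$ to $1-x_1$, matching $v(x_1)=1-x_1$ near $1$.

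Next I would extend this chart along $\gamma$ through $U_{zw}$. Fix small transverse slices $\Sigma_z\subset U_z$ and $\Sigma_w\subset U_w$ at the points where $\gamma$ exits $U_z$ and enters $U_w$, each carrying the inherited coordinates $(x_2,\dots,x_{n+1},y_1,\dots,y_{m-n})$. By Lemma~\ref{lem:noreturn}, after shrinking $U_z, U_w$ and taking $U_1$ to be a sufficiently small neighbourhood of $\gamma$ inside $U$, the flow of $\xi$ through $U_{zw}$ is a well-defined diffeomorphism $\Psi_\xi\colon\Sigma_z\to\Sigma_w$ preserving $\O$ (because $\xi$ is tangent to $\O$). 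The discrepancy between the two ends is then captured by the diffeomorphism $\Psi_{\text{target}}^{-1}\circ\Psi_\xi$ of $\Sigma_z$, where $\Psi_{\text{target}}$ is the identification given by matching coordinate labels. This discrepancy fixes the origin, preserves the linear subspace $\Sigma_z\cap\O$, and is isotopic to the identity through such diffeomorphisms by the contractibility of the relevant diffeomorphism group of a disc fixing a linear subspace.

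The heart of the argument is then to invoke Embedded Isotopy Lemma~\ref{lem:eil} to replace $\xi$ inside $U_{zw}$ by a field $\xi'$ whose flow realises $\Psi_{\text{target}}$, producing a single coordinate chart on $U_1$. On the portions coming from $U_z$ and $U_w$ the field $\xi'$ already has the desired form \eqref{eq:formofxi}. On the middle portion its $x_1$-component is strictly positive and its other components are normalised to the stated expressions by the chart construction; choosing a smooth $v\colon\R\to\R$ positive on $(0,1)$, negative off $[0,1]$, with $v(x)=x$ near $0$ and $v(x)=1-x$ near $1$, lets us then reparametrise the $x_1$-coordinate along the flow to make the $x_1$-component of $\xi'$ equal to $v(x_1)$ globally. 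The resulting field $\xi_1:=\xi'$ is still gradient-like for the \emph{same} function $F$, because the modifications were produced by the Isotopy Lemma and by reparametrisations tangent to the flow, both of which preserve the gradient-like property and are supported inside $U_{zw}$. The main obstacle I expect is the contractibility step: verifying that $\Psi_{\text{target}}^{-1}\circ\Psi_\xi$ lies in an isotopy-connected component of the identity within the subgroup of diffeomorphisms of $\Sigma_z$ preserving $\Sigma_z\cap\O$. In the interior case treated here, this reduces to the classical contractibility of the group of diffeomorphisms of a disc fixing a linear subspace; the boundary case of Remark~\ref{rem:boundary} requires the additional preservation of $Y\cap\Sigma_z$ and is handled analogously.
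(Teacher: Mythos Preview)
Your proposal follows essentially the same line as the paper's sketch: set up Morse charts at $z$ and $w$ with $\gamma$ along the $x_1$-axis, compare the flow-induced map between the exit slice of $U_z$ and the entry slice of $U_w$ with the coordinate identification, and use the Embedded Isotopy Lemma to absorb the discrepancy into a modification of $\xi$ supported in $U_{zw}$.

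One point deserves correction. You justify the isotopy step by invoking ``contractibility of the group of diffeomorphisms of a disc fixing a linear subspace''. This is not available in general (it is tied up with smoothings of spheres), and it is stronger than what is needed. The paper, following Milnor, only arranges that the flow map $h$ becomes the identity \emph{near the origin}: first linearise $h$ at the origin (the derivative lies in the subgroup of $GL$ preserving the splitting into stable, unstable, and normal directions, which is connected, so it can be isotoped to the identity), and then, since a diffeomorphism fixing the origin with derivative the identity is $C^1$-close to the identity on a small ball, a further isotopy makes it literally the identity there. Because $U_1$ may be taken as small as we like around $\gamma$, this local statement is all that is required. With that adjustment your argument goes through, and the remaining steps (extending coordinates along the flow, choosing $v$) are exactly as in Milnor's Assertion~6.
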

\begin{remark}\label{rem:boundary}
If $z$ and $w$ are boundary critical points then the statement of Lemma~\ref{lem:milnor-ass6} should be changed as in \cite[Proposition 5.2]{BNR1}.
Namely, the local coordinates system should be such that $\O=\{x_1\geqslant 0,y_1=\ldots=y_{m-n}=0\}$, $z=(0,\ldots,0)$, $w=(0,1,0,\ldots 0)$ and
$\xi$ should be of the form
\[(\epsilon x_1,v(x_2),-x_3,\ldots,-x_{k+1},-\epsilon x_{k+2},x_{k+3},\ldots,x_{n+1},y_1^2+\ldots+y_{m-n}^2,0,\ldots,0),\]
where $\epsilon=1$
for boundary unstable and $\epsilon=-1$ for boundary stable critical points.
\end{remark}

\begin{proof}[Sketch of proof of Lemma~\ref{lem:milnor-ass6}]
The proof is analogous to the proof of Assertion 6 in \cite{Mi2}. It consists of looking at the map $h$ between $\p U_z\cap\p U_{zw}$ and
$\p U_w\cap\p U_{zw}$ induced by the flow of $\xi$. One writes it in coordinates of $U_z$ and $U_w$. If this is the identity, we can extend the
coordinates from $U_z$ to $U_{zw}$ using the flow of $\xi$, and the coordinate systems on $U_z$, $U_w$ and $U_{zw}$ match together. Otherwise, we change~$h$ by an isotopy so that it is the identity near the origin and use Embedded Isotopy Lemma~\ref{lem:eil}. We omit the details, which are a straightforward but tedious generalisation of Milnor's approach.
\end{proof}

We resume the proof of Theorem~\ref{lem:eec}. Observe that changing the vector field from $\xi$ to $\xi_1$ does not create any broken trajectories between $z$ and $w$, so Lemma~\ref{lem:noreturn} holds for $\xi'$. We now take $U_1$ to be the set given by Lemma~\ref{lem:milnor-ass6}, and apply Lemma~\ref{lem:noreturn} to $U_1$.  Let $U_2$ be the open set which is the output of Lemma~\ref{lem:noreturn}, and choose a still smaller neighbourhood $U_3$ of $\gamma$.
Let $C$ be the supremum of $v(x_1)$ on $U_3$. Let $\psi$ be
a cut-off function, which is $1$ on $U_3$ and $0$ outside of $U_2$. We define $\xi'=\xi_1-(C+1)\psi\p_{x_1}$.
The vector field $\xi'$ now has no zeros on $U_3$, because the first coordinate of $\xi'$ is negative. On $(U_2\sm U_3)\cap\O$, at least one of the $\p_{x_j}$ coordinates of $\xi'$ is non-zero and on $(U_2\sm U_3)\sm\O$,
the coordinate $\p_{y_1}$ is non-zero. So in fact, $\xi'$ has critical points $z$ and $w$
removed. We want to show that $\xi'$ is a gradient-like vector field of some function. We will need the following result.

\begin{lemma}\label{lem:leaves}
Any trajectory $\gamma'$ through a point $u\in U_2$ leaves $U_1$
in the past and in the future.
\end{lemma}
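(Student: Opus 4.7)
The strategy is to work in the local coordinate system $(x_1,\dots,x_{n+1},y_1,\dots,y_{m-n})$ on $U_1$ provided by Lemma~\ref{lem:milnor-ass6}, in which $U_1$ is contained in a bounded box and $\xi'$ differs from $\xi_1$ only in its $\partial_{x_1}$-component, changed by the non-positive quantity $-(C+1)\psi$. The plan is to prove that $\gamma'$ exits $U_1$ in forward time; the backward case is symmetric after reversing the roles of the stable and unstable coordinates.

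Since the modification only affects $\partial_{x_1}$, the ODEs $\dot x_j = x_j$ for $j\geqslant k+2$, $\dot y_j = 0$ for $j\geqslant 2$, and $\dot y_1 = y_1^2 + \cdots + y_{m-n}^2$ hold along $\gamma'$ exactly as for $\xi_1$. Each of the following conditions would drive $\gamma'$ out of the bounded box in finite forward time: some $x_j(0)$ with $j\geqslant k+2$ is non-zero (exponential growth of $|x_j|$); some $y_j(0)$ with $j\geqslant 2$ is non-zero (forcing $\dot y_1\geqslant |y(0)|^2 > 0$); or $y_1(0) > 0$ with $y_j(0) = 0$ for $j\geqslant 2$, so that $\dot y_1 = y_1^2$ blows up in finite time. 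In the complementary invariant subspace $S = \{x_j = 0 \text{ for } j\geqslant k+2,\ y_j = 0 \text{ for } j\geqslant 2,\ y_1\leqslant 0\}$, the coordinates $x_2,\ldots,x_{k+1}$ decay exponentially to $0$ and $y_1$ increases monotonically to $0$, so $\gamma'$ asymptotically approaches the $x_1$-axis; meanwhile $\dot x_1 = v(x_1) - (C+1)\psi$ is $\leqslant -1$ throughout $U_3$ and equals $v(x_1) < 0$ wherever $\psi = 0$ and $x_1\notin [0,1]$.

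The main obstacle is precluding the possibility that $\gamma'$ remains in $U_1$ indefinitely by oscillating between $U_2$ and $\overline{U_1}\setminus U_2$. I would combine three ingredients. First, on the compact set $\overline{U_1}\setminus U_2$ there are no zeros of $\xi' = \xi_1$ (since $z,w\in U_3\subset U_2$), so $dF(\xi_1)$ has a uniform positive lower bound $\delta$, and each excursion of $\gamma'$ through $\overline{U_1}\setminus U_2$ lasts time at most $(\max_{\overline{U_1}} F - \min_{\overline{U_1}} F)/\delta$. Second, every passage of $\gamma'$ through $U_3$ decreases $x_1$ at rate at least $1$, while the possible $x_1$-increase along $\gamma'$ in the compact complement is uniformly bounded. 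Since in the reduced case $\gamma'$ asymptotically hugs the $x_1$-axis, it must re-enter $U_3$ whenever its $x_1$-coordinate lies in a fixed neighbourhood of $[0,1]$, and only finitely many such re-entries are possible before the cumulative drift drives $x_1$ below the lower bound of $U_1$, producing the required exit. Finally, Lemma~\ref{lem:noreturn} ensures that once $\gamma'$ has left $U_1$ it cannot re-enter $U_2$, so the exit is genuine, completing the proof.
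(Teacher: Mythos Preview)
Your overall strategy matches Milnor's argument (which is all the paper cites), and your treatment of the ``easy'' forward cases---some $x_j\neq 0$ for $j\geqslant k+2$, some $y_j\neq 0$ for $j\geqslant 2$, or $y_1>0$---is correct.  The gap is in your handling of the residual invariant subspace~$S$.

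In your second ingredient you assert that ``the possible $x_1$-increase along $\gamma'$ in the compact complement is uniformly bounded'', but you have only bounded the time $\gamma'$ spends in $\overline{U_1}\setminus U_2$ (via the $F$-argument), not the time in $U_2\setminus U_3$.  In $U_2\setminus U_3$ the cut-off $\psi$ may be close to~$0$ while $x_1\in(0,1)$, so $\dot x_1=v(x_1)-(C+1)\psi$ can be positive; without a time bound there, the ``only finitely many re-entries'' claim does not follow.

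The clean fix avoids the excursion bookkeeping entirely.  On the whole $x_1$-axis inside $\overline{U_1}$, the $x_1$-component of $\xi'$ is strictly negative: at an axis point lying in $U_3$ one has $\psi=1$ and $v(x_1)-(C+1)\leqslant -1$; at an axis point outside $U_3$ one must have $x_1\notin[0,1]$ (since $\gamma\subset U_3$), so $v(x_1)<0$ while $-(C+1)\psi\leqslant 0$.  By compactness of the axis segment in $\overline{U_1}$ and continuity of $\xi'$, there is a tubular neighbourhood of this axis and a constant $c>0$ with $\dot x_1<-c$ throughout.  In the residual case the decaying coordinates force $\gamma'(t)$ into this tube for all large $t$, whence $x_1(t)\to-\infty$ and $\gamma'$ exits $U_1$.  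Your closing appeal to Lemma~\ref{lem:noreturn} is then unnecessary: the statement only requires that $\gamma'$ leave $U_1$ once.

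One further small point: the backward case is not literally symmetric, since $\dot y_1=y_1^2+\cdots+y_{m-n}^2$ is not odd under time reversal.  The backward ``easy'' cases are $x_j\neq 0$ for some $2\leqslant j\leqslant k+1$, or some $y_j\neq 0$ for $j\geqslant 2$, or $y_1<0$ (then $y_1$ blows down in backward time); the residual backward case ($y_1\geqslant 0$) is handled by the same $x_1$-axis argument with $x_1$ increasing.
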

\begin{proof}
This statement is from \cite[Assertion 2, page 51]{Mi2} and its proof is completely analogous.
\end{proof}

We can now finish the proof of Theorem~\ref{lem:eec}.  By Lemma~\ref{lem:leaves} and Lemma~\ref{lem:noreturn} a trajectory passing
 through $U_2$ remains there only for a finite time and does not go back.  In particular, changing $\xi$ to $\xi'$ does not introduce any ``circular'' broken trajectories. Furthermore the fact that any trajectory only stays in $U_2$ for finite time implies that $\xi'$ has property (AG4).

By Lemma~\ref{lem:integrate}, $\xi'$ is a gradient-like vector field of some function $F'$ without critical
points.
\end{proof}

\section{Splitting of critical points}\label{section:splitting}

The aim of this section is to prove the Elementary and Global Handle Splitting Theorems, in Sections~\ref{section:elem-splitting} and \ref{section:global-handle-splitting} respectively.

We begin by showing that a single interior handle can be split into two half-handles; that is, an interior critical point can be pushed to the boundary and exchanged for one boundary stable critical point and one boundary unstable critical point.  Then we investigate when this theorem can be applied to split all of the interior handles simultaneously.  In codimension at least two, this can be achieved; in codimension one we have a partial result.

\subsection{The embedded elementary handle splitting theorem}\label{section:elem-splitting}

The following result holds for any positive codimension.

\begin{theorem}[Elementary Handle Splitting Theorem]\label{thm:embedded}
Let $(Z,\O)$ be an embedded cobordism with $F\colon Z\times[0,1]\to [0,1]$ a projection and $f=F|_\O$ the underlying Morse function.
Suppose that $f=F|_\O$ has a single interior critical
point at $z$ with index $k\in\{1,\dots,n\}$, $f(z)=\frac12$ and suppose that $z$ can be connected to the set $f^{-1}(1/2)\cap Y$ by a smooth path $\gamma$ contained entirely in $f^{-1}(1/2)$.
Then, for any neighbourhood of $\gamma$ in $Z\times[0,1]$, there exists a function $G\colon Z\times[0,1]\to[0,1]$, homotopic through submersions (Definition~\ref{def:regularhomotopy}) to $F$,
agreeing with $F$ away from that neighbourhood, such that $G$ has no critical points and $g:=G|_\O$ has two boundary critical points $z^s$ and $z^u$ of index $k$ and no interior critical points, where $z^s$ is boundary stable, $z^u$
is boundary unstable and there is a single trajectory of $\nabla g$ on $Y$ going from $z^s$ to $z^u$.
\end{theorem}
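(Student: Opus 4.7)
The strategy is to construct the modified function $G$ by an explicit local model on a tubular neighborhood $N$ of $\gamma$ inside the given neighborhood, and then join $F$ to $G$ by a homotopy through submersions. First I would use the Embedded Morse Lemma~\ref{lem:eml} at $z$, combined with Lemma~\ref{lem:funnylittlelemma} and the Embedded Isotopy Lemma~\ref{lem:eil} along $\gamma$, to produce coordinates $(x_1,\ldots,x_{n+1},y_1,\ldots,y_{m-n})$ on $N$ in which $\Omega\cap N=\{y=0,\ x_1\geqslant 0\}$, $Y\cap N=\{y=0,\ x_1=0\}$, $z$ sits at $(c,0,\ldots,0)$ for some $c>0$, $\gamma(1)$ is at the origin, and near $z$ (after a permutation placing one positive sign among the first two slots of the Morse quadratic form) the function $F$ reads
\[ F = \tfrac{1}{2} - (x_1-c)^2 + x_2^2 + Q(x_3,\ldots,x_{n+1}) + y_1, \]
where $Q = -x_3^2 - \cdots - x_{k+1}^2 + x_{k+2}^2 + \cdots + x_{n+1}^2$ has Morse index $k-1$.

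Next I would pick $a>0$ small enough that the points $(0,\pm a,0,\ldots,0)$ lie inside $N$, and define
\[ G = \tfrac{1}{2} + x_1(x_2^2 - a^2) + \tfrac{x_2^3}{3} - a^2 x_2 + Q(x_3,\ldots,x_{n+1}) + y_1 \]
on $N$, extending by $F$ outside $N$ with a smooth cutoff that preserves the $+y_1$ summand. Since $\partial G/\partial y_1=1$, the function $G$ has no critical points on $Z\times[0,1]$. Solving $\nabla g = 0$ where $g=G|_\Omega$ forces $x_2=\pm a$, $x_1=0$, and $x_j=0$ for $j\geqslant 3$, so the critical points of $g$ are exactly $z^s=(0,a,0,\ldots,0)$ and $z^u=(0,-a,0,\ldots,0)$, both on $Y$, with no interior critical points. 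Using Lemma~\ref{lem:tang} to pick a Riemannian metric on $\Omega$ making $\nabla g$ tangent to $Y$, the Hessian-as-endomorphism of the two-dimensional part $\phi(x_1,x_2)=x_1(x_2^2-a^2)+\frac{x_2^3}{3}-a^2 x_2$ becomes lower-triangular at the two critical points, with eigenvector data showing that $z^s$ has its unstable direction along $\partial_{x_2}\in TY$ (hence boundary stable of index $k$) while $z^u$ has its stable direction along $\partial_{x_2}\in TY$ (hence boundary unstable of index $k$). Finally, $g|_Y=\tfrac{1}{2}+(\tfrac{x_2^3}{3}-a^2x_2)+Q(x_3,\ldots,x_{n+1})$ separates into the $x_2$-cubic and the $Q$-quadratic, so the gradient flow on $Y$ decouples and the unique trajectory from $z^s$ to $z^u$ runs along the $x_2$-axis with $x_3=\cdots=x_{n+1}=0$, on which $x_2$ descends monotonically from $a$ to $-a$.

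For the homotopy of submersions from $F$ to $G$, I would use a two-stage family. In Stage~1, for $t\in[0,\tfrac{1}{2}]$, take $F_t$ to agree with $F$ but with $(x_1-c)^2$ replaced by $(x_1-c(1-2t))^2$, sliding the interior critical point along $\gamma$ until at $t=\tfrac{1}{2}$ it merges with $Y$ at the origin as a single boundary stable critical point of $\Omega$-index $k$. In Stage~2, for $t\in[\tfrac{1}{2},1]$, introduce the cubic perturbation via a localized birth of a $(k-1,k)$-critical-point pair on $Y$ using a standard Cerf-theoretic model supported in a small neighborhood of $\gamma(1)$, then isotope the resulting configuration to the target $z^s,z^u$ of $G$. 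Throughout the family the $+y_1$ summand is preserved so each $F_t$ is a submersion on $Z\times[0,1]$; only finitely many $t$-values yield a non-Morse $f_t$, matching Definition~\ref{def:regularhomotopy}. The principal technical obstacle is Stage~2: a naive linear interpolation between the Stage~1 endpoint and $G$ causes an auxiliary boundary critical point to enter $N$ from far away, so the pair-creation must be realised by a carefully localised Cerf model, which I expect to be the main point requiring detailed verification.
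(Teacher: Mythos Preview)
Your local model for $G$ is reasonable and the critical-point analysis of $g=G|_\Omega$ is essentially correct, but there is a genuine gap at the gluing step.  Your formula for $G$ on $N$ does not agree with $F$ anywhere on $\Omega\cap N$: near $z$ you have $F$ quadratic and your $G$ cubic, and away from $z$ you have not specified $F$ at all in these coordinates.  Thus ``extending by $F$ outside $N$ with a smooth cutoff'' really means interpolating on $\Omega$ between two unrelated functions, and such an interpolation will in general create new critical points of $g$ in the transition shell.  Your observation that $\partial G/\partial y_1>0$ only rules out critical points of the \emph{ambient} function $G$; it says nothing about critical points of $g=G|_\Omega$ in the cutoff region.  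A related issue: the Embedded Isotopy Lemma cannot be used to propagate the Morse chart along $\gamma$ as you suggest, since that lemma transports data between two distinct level sets via the flow of $\xi$, whereas $\gamma$ lies entirely in the single level $f^{-1}(1/2)$.  The same difficulty undermines Stage~1 of your homotopy: the formula $F_t=\tfrac12-(x_1-c(1-2t))^2+\cdots$ is only meaningful where the Morse chart is valid, but for $t$ near $\tfrac12$ the moving critical point has left that chart.

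The paper handles exactly this difficulty by a different mechanism.  It invokes \cite[Proposition~3.5]{BNR1} to first put $f$ itself into the cubic normal form
\[
f \;=\; y^3 - y x^2 + y + \tfrac12 + \vec{u}^{\,2}
\]
on an entire half-disc neighbourhood $U$ of $\gamma$ in $\Omega$ (not merely near $z$); this normal form along the whole path is where the real work lies, and is the step you are missing.  The neighbourhood $U$ is then thickened by normal coordinates $\vec{w}$, and a preliminary modification (Lemma~\ref{lem:positivenormal}) arranges $\epsilon\,\partial F/\partial w_r>0$ on the relevant box without changing $F|_\Omega$.  With this preparation in place, $G$ is defined as a genuine \emph{perturbation}
\[
G \;=\; F \;-\; \phi_2(\|\vec{w}\|)\,(1+\delta)\,b(x,y,\vec{u})\,y,
\]
so that $G\equiv F$ outside a compact set automatically and there is no interpolation problem.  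The absence of new critical points of $g$ then follows from \cite[Theorem~3.1]{BNR1}, and the ambient submersion condition from a direct estimate on $\partial G/\partial w_r$.  The homotopy through submersions is simply the linear one $t\mapsto F+t(\text{perturbation})$; no two-stage argument or Cerf birth model is required.  If you wish to retain your explicit model, you would first need an analogue of \cite[Proposition~3.5]{BNR1} producing coordinates on a full neighbourhood of $\gamma$ in which $f$ already agrees with your $g$ near $\partial N$.
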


\begin{proof}
The proof is an extension of argument in the proof of \cite[Theorem 3.1]{BNR1}. First, by \cite[Proposition 3.5]{BNR1}
there exists $\wtr>0$ and a ``half-disc'' $U\subset\O$ with $\gamma \subset U$
with local coordinates $(x,y,\vu)$, for $x\in[0,3+\wtr)$, $|y|<\wtr$ and $||\vu||^2<\wtr^2$, such
that the coordinates of the critical point are $z=(1,0,\dots,0)$ and $f$ has the form
\[y^3-yx^2+y+\frac12+\vu^2.\]
Here we write $\vu=(u_1,\ldots,u_{n-1})$, $||\vu||^2=\sum u_j^2$ and $\vu^2=\sum \epsilon_ju_j^2$, where $\epsilon_j=\pm 1$ depending on the index of
the critical point $z$.

We thicken $U$ in $Z\times[0,1]$ to a ``half-disc'' $W\subset Z\times[0,1]$ of dimension $m+1$ (i.e.\ codimension 0 in $Z\times[0,1]$).  Choose $U$ and $W$ small enough so that they lie inside the neighbourhood of $\gamma$ referred to in the proof of Theorem~\ref{thm:embedded}.
Now $W$ is diffeomorphic to a product $U\times(-\wtr,\wtr)^{m-n}$, which means that there exists a map
$\vw\colon W\to(-\wtr,\wtr)^{m-n}$, $\vw=(w_1,\dots,w_{m-n})$, so that $U=\{w_1=\dots=w_{m-n}=0\}$ and the collection of functions $(x,y,\vu,\vw)$ forms
a local coordinate system on $W$. In this way we identify $W$ with
$[0,3+\wtr)\times(-\wtr,\wtr)^{n+1}$.

\begin{lemma}\label{lem:positivenormal}
There exists a sign $\epsilon \in \{\pm 1\}$, a choice of index $r\in\{1,\dots,m-n\}$, real numbers $\etp,\taup\in(0,\wtr/2)$
and a smooth function $F_\phi\colon Z\times[0,1]\to[0,1]$, such that $F_\phi|_\O=F|_\O$,
$F_\phi$ agrees with $F$ away from $[0,3+\etp]\times[-2\etp,2\etp]^n\times[-\taup,\taup]^{m-n}\subset W$ and for any $v\in[0,3+\etp]\times[-\etp,\etp]^n\times\{0\}$
we have $\epsilon\frac{\p F_\phi}{\p w_r}(v)>0$.
\end{lemma}
\begin{proof}
As $z$ is not a critical point of $F$ (only of $F|_\O$), there exists an integer $r\in\{1,\dots,m-n\}$ such that
$\frac{\p F}{\p w_r}(z)\neq 0$. We choose $\epsilon$ so that $\epsilon\frac{\p F}{\p w_r}(z)>0$.
By continuity of $\frac{\p F}{\p w_r}$ there exists $\etp>0$, $\etp<\wtr/2$,
such that $\epsilon\frac{\p F}{\p w_r}(v)>0$ whenever $v\in[1-\etp,1+2\etp]\times[-\etp,\etp]^{m}$.
Define:
\begin{align*}
A&=([0,1-\etp]\cup[1+2\etp,3+\etp])\times[-\etp,\etp]^n\subset U\subset\O;\\
A'&=([0,1-\etp/2]\cup[1+\etp,3+\etp])\times[-2\etp,2\etp]^n\subset U\subset\O.\\
\end{align*}

Note that $A \subset A'$.  We choose $\taup$ such that $0 < \taup <\etp$ and define
\begin{align*}
p_1&=2\sup\left\{-\epsilon\frac{\p F}{\p w_r}(v)\colon v\in A\right\}\\
p_2&=\frac12\inf\left\{\left|\frac{\p F}{\p y}(v)\right|\colon v\in A'\times[-\taup,\taup]^{m-n}\subset W\right\}.
\end{align*}
If $p_1<0$ no changes are needed and the proof is finished. If $p_1=0$, we redefine $p_1$ to be a very small positive number.
As for $v\in A'$ we have $\frac{\p F}{\p y}(v)\neq 0$ by direct computation, for $\taup$ small enough we have $p_2>0$. We assume
that
\[\taup<\frac{p_2\etp}{2 p_1}.\]
 If this is not true at first then choose a smaller $\taup$; this lowers the left hand side and cannot lower the right hand side, since only $p_2$ depends on $\taup$ and lower $\taup$ cannot lower $p_2$.
Now choose
a cut-off function $\phi_1\colon Z\times[0,1]\to[0,1]$ with support contained in $A'\times[-\taup,\taup]^{m-n}$
such that $\left|\frac{\p\phi_1}{\p y}\right|<\frac{2}{\etp}$
and $\phi_1|_A=1$. We define now
\begin{equation}\label{eq:defFphi}
F_\phi=F+\epsilon p_1 w_r\phi_1(x,y,\vu,\vw).
\end{equation}
This function agrees with $F$ on $Z\times[0,1]\sm (A'\times[-\taup,\taup]^{n-m})$ and on $A'$ (because on $U$ we have $w_r=0$).
Furthermore, for $v\in A$,
\[\epsilon\frac{\p F_\phi}{\p w_r}(v)=\epsilon\frac{\p f}{\p w_r}(v)+p_1>0,\]
so $\epsilon\frac{\p F_\phi}{\p w_r}>0$ everywhere on $[0,3+\etp]\times[-\etp,\etp]^n\subset U$. To show that $F_\phi$ has no critical
points in $A' \times [-\taup,\taup]^{m-n}$ we compute
\[\left|\frac{\p F_\phi}{\p y}\right|\geqslant
\left|\frac{\p F}{\p y}\right|-\left|p_1w_r\frac{\p\phi_1}{\p y}\right|> 2p_2-p_1\frac{p_2\etp}{2p_1}\frac{2}{\etp} = p_2 >0.\]
\end{proof}

Given Lemma~\ref{lem:positivenormal} we
write $F$ instead of $F_\phi$.  Define
\[W_0(t):=[0,3+\wtr]\times[-\wtr,\wtr]^n\times[-t,t]^{n-m}.\]
Furthermore define
\[p_3(t) := \inf\left\{\epsilon\frac{\p f}{\p w_r}(v)\colon v\in W_0(t)\right\}.\]
For $t$ small enough, by continuity of $\frac{\p f}{\p w_r}$ we have that $p_3(t)>0$.  Choose $\taus>0$ to be such a $t$ and satisfying $\taus < \taup$, where $\taup$ is from Lemma~\ref{lem:positivenormal}.
Now fix $p_3:= p_3(\taus)>0$, and choose $\eta < \etp$, where $\etp$ is also from Lemma~\ref{lem:positivenormal}, satisfying
\[\eta<\frac{\taus p_3}{4}.\]
Let $\delta>0$ be very small. Let us choose a function $b\colon U\to[0,1]$ as in \cite[Equation~(3.7)]{BNR1}.
Let $\phi_2\colon[-\taus,\taus]\to[0,1]$ be a cut-off function, with $\phi_2\equiv 1$
near $0$ and $|\phi_2'(s)|<\frac{2}{\taus}$ for all $s \in [-\taus,\taus]$ (we use the notation $\phi_2$ to avoid confusion with $\phi$ from \cite[Equation~(3.7)]{BNR1}).
Define
\[W_1:=[0,3+\eta]\times[-\eta,\eta]^n\times[-\taus,\taus]^{n-m}.\]
Note that $W_1 \subset W_0(\taus)$.
By analogy with \cite[Equation~(3.8)]{BNR1}, we define
\[G(v)=\begin{cases}
 F(v)& \mbox{if} \ v\not\in W_1\\
F(v)-\phi_2\left(||\vw||\right)(1+\delta)b(x,y,\vu)y& \mbox{if} \  v=(x,y,\vu,\vw)\in W_1,
\end{cases}
\]
where $||\vw||$ is the Euclidean norm of the vector $\vw$.

Since $\phi_2(0)=1$, on $g:=G|_\O$ agrees with the function $g$ from \cite[Theorem 3.1]{BNR1}, so the properties
of the critical points of $g$ from the statement of the present theorem are satisfied. The last condition that we need
to ensure holds is that $G$ has no critical points on $W_0$. But now $|(1+\delta)b|<2$, so
\[\left|\frac{\p}{\p w_r}\left(\phi_2\left(||\vw||\right)
(1+\delta)b(x,y,\vu)\right)\right|<\frac{4}{\taus}.\]
Now $|y|\leqslant\eta<\frac{p_3\taus}{4}$, hence $\epsilon\frac{\p g}{\p w_r}>0$.

Finally we note that the passage from $F$ to $F_\phi$ and then from $F_\phi$ to $G$ can be obtained by a nondegenerate homotopy through submersions; one simply writes $F_t=F+t\epsilon p_1w_r\phi_1(x,y,\vu,\vw)$ for $t\in[0,1]$ and similarly in the definition of $G$. Thus the functions $F$
and $G$ are homotopic through submersions as claimed.
\end{proof}

\subsection{Moving many critical points at once}

Now we pass to the problem of moving all the critical points of a cobordism to the boundary at once. Checking the condition that each critical point
of $f$ can be joined to the boundary by a curve lying entirely in one level set of $f$ is rather complicated.  We shall show that
this is possible.  We would mostly like to repeat the procedure from \cite[Section 4.5]{BNR1}. However in the embedded case we cannot, in general,
cancel pairs of $0$ and $1$ handles, in such a way that we preserve the isotopy class of $\O$, if the codimension is 1 or 2.
Therefore the notion of a technically good function
(see \cite[Definition 4.8]{BNR1}) is not suitable for our present purpose.  We need to modify the reasoning and replace the notion of a technically
good function with a more convenient notion.

\begin{definition}
A function $f\colon\O\to[0,1]$ is called \emph{technically still acceptable} if there exist non-critical values $a,b,c,d$ of $f$ with $0<a<c<d<b<1$ such that
the critical points of $f$ are distributed in the following way. For $n>1$:
\begin{itemize}
\item[(TSA1)] The inverse image $f^{-1}[0,a]$ contains all critical points of index $0$ and all boundary stable critical points of index $1$. It does not contain
any other critical points.
\item[(TSA2)] The inverse image $f^{-1}[a,c]$ contains all interior critical points of index $1$ and no other critical points.
\item[(TSA3)] The inverse image $f^{-1}[d,b]$ contains all interior critical points of index $n$ and no other critical points.
\item[(TSA4)] The inverse image $f^{-1}[b,1]$ contains all critical points of index $n+1$ and all boundary unstable critical points of index $n$.  It does not contain any other critical points.
\item[(TSA5)] The interior critical points of index $1$ lie on a single level set $f^{-1}(\rho)$ and the interior critical points of index
$n$ lie on a single level set $f^{-1}(\rho')$.
\end{itemize}

If $\dim\O=2$, so $n=1$, then for compatibility we modify the conditions (TSA1)--(TSA5). Namely we define $d=a$, $b=c$, and we have $0< a < c < 1$, so that $[c,d]$ is undefined. We assume that the interior critical points of index $1$ lie in $f^{-1}(\rho)$ for some $\rho\in[a,c]$. Furthermore, critical points of index $0$ are in $f^{-1}[0,a]$, as well as boundary stable critical points of index $1$; critical points of index $2$ and boundary unstable critical points of index $1$ are in $f^{-1}[c,1]$.
\end{definition}

We point out that we do not assume that there are no pairs
of critical points that could be cancelled (i.e.\ which are joined by a single trajectory, and no broken trajectories, of the gradient-like
vector field, are of the same type and have indices $k$, $k+1$ for some $k$), and also we do not assume that the critical points whose indices are between~$2$ and~$n-1$ are ordered.
By Theorem~\ref{thm:egrt} we can always assume that the function $f$ is technically still acceptable in codimension two or more.
We have the following analogue of \cite[Proposition 4.11]{BNR1}.

\begin{proposition}\label{prop:4.11}
Suppose that neither $\O$, $\S_0,$ nor $\S_1$ have closed connected components.
If $\codim\O\subset Z\times[0,1]\geqslant 2$ and $f$ is technically still acceptable, then we can rearrange $F$ so that each interior critical
point $z$ of $f=F|_\O$ of index $1,\dots,n$ can be joined to the boundary by a curve lying entirely in $f^{-1}(f(z))$.
\end{proposition}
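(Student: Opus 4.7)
The goal is to arrange, after modifying $F$ while preserving the technically still acceptable form, that every interior critical point $z$ of index $k\in\{1,\ldots,n\}$ lies in a connected component of its level set $\Sigma_{f(z)}:=f^{-1}(f(z))$ that meets $Y$. Equivalently, no such $z$ should lie in a closed (boundaryless) component of its level set. First, I would analyse how closed components of level sets $\Sigma_t$ evolve with $t$. By hypothesis $\S_0$ has no closed components, and closed components of higher level sets are born only at interior index $0$ critical points, each producing an $S^n$. Interior critical points of index $k$ with $2\leqslant k\leqslant n-1$ perform surgery along a connected sphere $S^{k-1}$ inside a single component, so they neither create nor merge components, and boundary critical points act only near $Y$, leaving components disjoint from $Y$ untouched. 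Only an interior index $1$ critical point whose attaching $0$-sphere straddles a closed and a non-closed component can convert the closed component into a non-closed one. Dually, since $\S_1$ has no closed components, every closed component persisting into $f^{-1}[b,1]$ must ultimately be destroyed by an interior index $n+1$ critical point, possibly after an interior index $n$ splitting. Combined with the technically still acceptable form, the closed components of $\Sigma_t$ for $t\in(c,d)$ are exactly the $S^n$-descendants of interior index $0$ critical points that were not absorbed by interior index $1$ critical points in $f^{-1}[a,c]$.

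The main step is to rearrange $F$ so as to eliminate these persistent closed components in the middle region. Because the codimension is $\geqslant 2$, the Global Rearrangement Theorem~\ref{thm:egrt} allows repositioning of critical points freely within each block of the technically still acceptable structure. The Embedded Isotopy Lemma~\ref{lem:eil} provides the means to isotope the attaching $0$-sphere of a chosen interior index $1$ critical point so that it straddles a targeted closed component and a non-closed component, absorbing the former into the latter. Because $\O$ has no closed components, each ``leftover'' interior index $0$ critical point must admit such a partnering; otherwise some closed component would have to survive all the way to $\S_1$, contradicting the hypothesis. Dually, one can instead work from the top using interior index $n$ and $n+1$ critical points. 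After these modifications, the middle region contains no closed components of level sets, so every interior critical point of index $k\in\{1,\ldots,n\}$ lies in a non-closed component of its level set, and a path from $z$ to $Y$ inside this component exists.

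The principal technical obstacle is carrying out these isotopies of attaching spheres and the rearrangements consistently, without introducing spurious trajectories, broken trajectories, or violating the embedded Morse--Smale condition. Here the codimension $\geqslant 2$ hypothesis is essential: Proposition~\ref{prop:ems} ensures that ascending and descending membranes of critical points of appropriately unequal indices can be arranged disjoint in the ambient space $Z\times[0,1]$, which in turn supplies enough room inside the level sets $\Sigma_t\subset Z$ to perform the required isotopies of attaching $0$-spheres between distinct components. The delicate combinatorial matching of each ``creator'' interior index $0$ critical point with an ``absorbing'' interior index $1$ critical point follows the strategy of \cite[Proposition 4.11]{BNR1}, now adapted using the embedded tools developed in earlier sections.
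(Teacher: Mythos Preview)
Your central mechanism---using the Embedded Isotopy Lemma to move the attaching $S^0$ of an interior index~$1$ critical point so that it straddles a closed and a non-closed component of a level set---cannot work. The isotopy in Lemma~\ref{lem:eil} is an isotopy of the triple $(Z,\O\cap f^{-1}(t),Y\cap f^{-1}(t))$; it preserves $\O\cap f^{-1}(t)$ setwise and hence preserves its connected components, so each point of the descending $S^0$ stays in the component it started in. More importantly, this modification is unnecessary: the technically still acceptable hypothesis already forces $f^{-1}(y)$ to have no closed components for every $y\in[c,d]$. Indeed, if some component $\O'$ of $f^{-1}[0,x]$ were disjoint from $Y$ for $x\geqslant c$, then since the component $\O_1$ of $\O$ containing $\O'$ meets $Y$ (by the hypotheses on $\O,\S_0,\S_1$), connectivity of $\O_1$ forces an \emph{interior} index~$1$ critical point above level $x$ joining $\O'$ to the rest of $\O_1$; but (TSA2) forbids interior index~$1$ critical points above level~$c$. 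Your sentence ``otherwise some closed component would have to survive all the way to $\S_1$'' is the right instinct but draws the wrong conclusion: it does not say that an absorbing index~$1$ handle can be \emph{manufactured}, it says that one already exists below level~$c$, so there is nothing to fix in $[c,d]$.

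Your outline also does not address the interior index~$1$ critical points themselves, which by (TSA5) sit on a single level $f^{-1}(\rho)$ with $\rho\in[a,c]$, outside the region you analyse. One shows $f^{-1}(\rho)$ has no closed components (it arises from $f^{-1}(c)$ by collapsing the ascending spheres of the index~$1$ points), but the remaining issue is that a path from one such critical point to $Y$ inside $f^{-1}(\rho)$ may be forced through another. The paper handles this by an inductive rearrangement: pick an index~$1$ critical point reachable from $Y$ by a path in $f^{-1}(\rho)$ avoiding the others, push it to its own lower level, and repeat with the remaining ones. The index~$n$ case in $[d,b]$ is symmetric.
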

\begin{proof}[Beginning of the proof of Proposition~\ref{prop:4.11}]
We shall follow the proof of \cite[Proposition 4.11]{BNR1}. In particular we shall use several lemmas from that paper.  The majority of the proof of Proposition~\ref{prop:4.11} will be contained in Lemmata~\ref{lem:4.13} through~\ref{lem:new}.

Observe that in the present
case the interval $[c,d]$ does not contain the critical value of any interior critical point with index $1$ or $n$.  First we shall prove that for any $y\in[c,d]$, the inverse image $f^{-1}(y)$ has no closed connected components, then we shall
work with $f^{-1}[a,c]$ and $f^{-1}[b,d]$.

Let us recall the following lemmas from \cite{BNR1}. All the proofs are given in that article; we indicate how they can be modified for our embedded case as necessary.

\begin{lemma}[see \expandafter{\cite[Lemma 4.13]{BNR1}}]\label{lem:4.13}
Let $x,y\in[0,1]$ with $x<y$. If $\O'$ is a connected component of $f^{-1}[x,y]$ then either $\O'\cap Y=\emptyset$, or for any $u\in[x,y]\cap[c,d]$
we have $f^{-1}(u)\cap\O'\cap Y\neq\emptyset$.
\end{lemma}
\begin{lemma}[see \expandafter{\cite[Lemma 4.14]{BNR1}}]\label{lem:4.14}
For any $x\in[c,1]$ the set $f^{-1}[0,x]$ cannot have a connected component disjoint from $Y$.
\end{lemma}
\begin{proof}[Proof of Lemma~\ref{lem:4.14}]
The proof in \cite{BNR1} relies on having cancelled all possible pairs of critical points of indices $0$ and $1$. We
present a modification of that proof which does not assume this.

Suppose for a contradiction that $\O'$ is a connected component of $f^{-1}[0,x]$ such that $\O'\cap Y=\emptyset$. Let~$\O_1$
be the connected component of $\O$ which contains $\O'$. If $\O_1\cap Y=\emptyset$, then either $\p\O_1=\emptyset$ or $\p\O\subset\S_0\cup\S_1$.
In the first case $\O_1$ is a closed connected component of $\O$.  In the second either $\S_0$ or $\S_1$ has a closed connected component.  This contradicts the hypothesis of Proposition~\ref{prop:4.11}.  The contradiction
implies that $\O_1\cap Y\neq\emptyset$. By Lemma~\ref{lem:4.13}, $f^{-1}(x)\cap\O_1\cap Y\neq\emptyset$.
In particular $\O'':=(f^{-1}[0,x]\cap\O_1)\sm\O'$ is not empty.

Now, $\O''$ and $\O'$ are both subsets of a connected space $\O_1$. Thus, there must be a critical point $z\in\O_1$, of index $1$, which
joins $\O'$ to $\O''$. We have that $f(z)>x$. As the connected component of $f^{-1}[0,f(z))$ containing $\O'$ has empty intersection with $Y$
(by Lemma~\ref{lem:4.13}), $z$ must be an interior critical point.

Up until now we were following the proof of \cite[Lemma 4.14]{BNR1}. Now we use a different argument. Namely, as $x>c$, $f(z)>c$ as well.
But the property (TSA2) implies that there cannot be any interior critical points of index $1$ in $f^{-1}[c,1]$. This contradiction finishes the proof of the lemma.
\end{proof}

\begin{lemma}[see \expandafter{\cite[Lemma 4.16]{BNR1}}]\label{lem:4.16}
Let $y\in[c,d]$ be chosen so that there are no interior or boundary unstable critical points of index $n$ with critical values in $[c,y)$.
Then $f^{-1}(y)$ has no closed connected components.
\end{lemma}

Observe that in the present situation, (TSA3) implies that the assumption to Lemma~\ref{lem:4.16} are automatically satisfied for $y=d$. Hence
we get the following result.

\begin{lemma}\label{lem:new2}
For any $y\in[c,d]$ the set $f^{-1}(y)$ has no closed connected components.
\end{lemma}

We are going to deal with the set $f^{-1}(y)$ for $y\in[a,c]$. The case $y\in[d,b]$ is symmetric. The following lemma holds for $n>1$ and $n=1$, although
the proof in the two cases is different.

\begin{lemma}\label{lem:new}
The interior critical points of $f$ of index $1$ can be rearranged, without changing $f$ away from $f^{-1}[a,c]$, so that
each critical point $z$ of index $1$ can be connected to the boundary of $Y$ with a curve lying entirely in $f^{-1}(z)$.
\end{lemma}
\begin{proof}
The proof follows \cite[proof of Proposition 4.11, case $n=1$]{BNR1} with a small modification. Suppose $n>1$.

Observe that $f^{-1}(c)$ has no closed connected components by Lemma~\ref{lem:new2}. The interior critical points of index $1$ all lie originally
on the level set $f^{-1}(\rho)$ by (TSA5), for some $\rho\in[a,c]$. It follows that $f^{-1}(\rho)$ has no closed connected components, in fact
$f^{-1}(\rho)$ arises from $f^{-1}(c)$ by contracting each intersection $W^u(z)\cap f^{-1}(c)$ to a point, where $z$ ranges through all interior critical points of index $1$.

Now we proceed by induction, as in \cite{BNR1}.  Assume that the interior critical points of index $1$
are $z_1,\dots,z_k$. Let us choose $a_1,\dots,a_k$ such that $a<a_1<\dots<a_k<\rho$ (this is different from~\cite{BNR1}).
Choose a critical point $z\in f^{-1}(\rho)$ which can be connected to $Y$ by a curve $\gamma$ staying in $f^{-1}(\rho)$
and not intersecting other critical points. Assume that this is $z_1$. Then we rearrange the critical points, so that $f(z_1)=a_1$ and the
position of other critical points is not changed.  We claim that $f^{-1}(\rho)$ still has no closed connected components. This is so, because
if $n>1$ a 1-handle does not increase the number of connected components.  Hence we find again a critical point in $f^{-1}(\rho)$, which we call $z_2$, that can be connected to $Y$ by a curve lying entirely in $f^{-1}(\rho)$ omitting $z_3,\dots,z_k$. We move $z_2$ to the level $a_2$.

After finite number of moves we complete the proof.

\smallskip
The $n=1$ case is slightly more difficult, because the number of connected components of the level set can increase or decrease depending on
whether the ascending sphere $S^0$ belongs to a single connected component of $f^{-1}(a)$, or to two components. We will use the
following trick.  As before let the critical points be $z_1,\dots,z_k$. Let us choose numbers $a_1,\dots,a_k,c_1,\dots,c_k$ such that
$a<a_1<\dots<a_k<\rho<c_k<\dots<c_1<c$. Recall that $f^{-1}(\rho)$ has no closed connected components. Let us again choose a critical point (we relabel
the critical points so that this is $z_1$), which
can be connected to $Y$ in the level set of $f^{-1}(\rho)$ by a curve omitting all other critical points. We have two cases: either $z_1$ is attached to
two separate connected components of $f^{-1}(a)$, then we move it to the level $a_1$; or $z_1$ is attached to a single connected component,
and then we move $z_1$ to the level set of $c_1$. In both cases, it still holds that $f^{-1}(\rho)$ has no closed connected component. We can thus find
a critical point in $f^{-1}(\rho)$ (we will call it $z_2$) that can be connected to the $Y$ by a path in $f^{-1}(\rho)$ omitting other critical points.
We move it to the level set $a_2$ or $c_2$ as above. The procedure is then repeated inductively with the remaining critical points.
\end{proof}

\emph{Conclusion of the proof of Proposition~\ref{prop:4.11}.} For $n>1$, by Lemma~\ref{lem:new2} $f^{-1}(y)$ has no closed connected components
for any $y\in[c,d]$.  Hence all interior critical points of index $2,\dots,n-1$ can be connected to the boundary by a curve lying
in the corresponding level set. The case of interior critical points of index $1$ is dealt with in Lemma~\ref{lem:new}. The same lemma also deals
with the case of critical points of index $n-1$ by applying it to $1-f$; the proof for $n>1$ is finished.

For $n=1$ we use only Lemma~\ref{lem:new}.
\end{proof}

If the codimension of the embedding is one we have the following partial result.

\begin{proposition}\label{prop:codim1-4.11}
Suppose that neither $\O$, $\S_0$, nor $\S_1$ have closed connected components and $\codim\O\subset Z\times[0,1]=1$. Then we can change $F$ by a rearrangement so that each critical point $z$  of $f=F|_\O$ of index $2,\ldots,n-1$ can be joined to the boundary by a curve lying entirely in the
level set $f^{-1})(f(z))$.
\end{proposition}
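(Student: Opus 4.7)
The plan is to adapt the proof of Proposition~\ref{prop:4.11} to the codimension one setting, exploiting the observation that the codimension one obstructions identified elsewhere in the paper concern only indices~$1$ and~$n$, which are precisely the indices excluded from the present statement. If $n\leqslant 2$, the set $\{2,\dots,n-1\}$ is empty and the assertion is vacuous, so we may assume $n\geqslant 3$.

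First, I would apply Theorem~\ref{prop:egrt1} to rearrange the critical values so that they satisfy condition (A1'): strict monotonicity in the index. One can then choose regular values $c,d$ with $0<c<d<1$ such that every critical point of index at most~$1$ has critical value less than~$c$, every critical point of index at least~$n$ has critical value greater than~$d$, and every interior critical point of index $k\in\{2,\dots,n-1\}$ has critical value in $(c,d)$. This provides a partial analogue of the ``technically still acceptable'' setup, restricted to the middle range of indices.

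Next, I would establish the codimension one analogue of Lemma~\ref{lem:new2}: for every $y\in[c,d]$, the level set $f^{-1}(y)$ has no closed connected component. Lemma~\ref{lem:4.13} is purely topological and carries over verbatim. The modified proof of Lemma~\ref{lem:4.14} given in the excerpt relied only on (TSA2), namely the absence of interior index~$1$ critical points in $f^{-1}[c,1]$; in our setting this is automatic from the choice of~$c$, since no critical point of index at most~$1$ lies in $f^{-1}[c,1]$. The argument of Lemma~\ref{lem:4.16} carries through symmetrically, replacing the role of~$c$ by~$d$. Combining these yields the no-closed-components statement for each $y\in[c,d]$.

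Finally, after a small perturbation (absorbed into a further application of Theorem~\ref{prop:egrt1}) to ensure all critical values are distinct, for each interior critical point $z$ of index $k\in\{2,\dots,n-1\}$ the connected component of $f^{-1}(f(z))$ containing~$z$ meets~$Y$. Using the local model supplied by the Embedded Morse Lemma~\ref{lem:eml}, one extracts a smooth ray from the singular point~$z$ into the regular part of this component, then extends it by a smooth path in the level set to a point of~$Y$, avoiding the finitely many other critical points. The decisive point---and the reason this argument succeeds only for intermediate indices---is that we never need to interchange the order of a boundary stable and a boundary unstable critical point of the same index; the only rearrangements invoked are those raising the index, which are guaranteed by Theorem~\ref{prop:egrt1}. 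The main subtlety is therefore not an obstruction but a verification: that the weak rearrangement (A1') already suffices to run the no-closed-components argument once index~$1$ and index~$n$ points are corralled away.
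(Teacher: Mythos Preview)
Your proof is correct and follows essentially the same approach as the paper: apply Theorem~\ref{prop:egrt1} to order critical values by index, choose $c<d$ isolating the critical points of indices $2,\dots,n-1$, and then observe that Lemmata~\ref{lem:4.13}, \ref{lem:4.14}, and~\ref{lem:4.16} go through unchanged because the codimension hypothesis is never invoked in their proofs. The paper's own argument is considerably terser---it simply remarks that the codimension assumption is never used---whereas you spell out explicitly why each lemma applies and add the final path-construction step.
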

\begin{proof}
By Theorem~\ref{prop:egrt1} (Global Rearrangement Theorem in codimension one) we rearrange~$F$ so that if~$z$ and~$w$ are critical points of $f=F|_\O$
and the index of $z$ is smaller than the index of $w$, then $F(z)<F(w)$. In particular we can choose $c<d$ such that $F^{-1}[c,d]$ contains
all critical points of $f$ of indices between~$2$ and~$n-1$ inclusive, and only those critical points.

Now Lemmata~\ref{lem:4.13}, \ref{lem:4.14} and \ref{lem:4.16} hold in this case, because the codimension assumption is never used in the proofs.
\end{proof}

\subsection{The embedded global handle splitting theorem}\label{section:global-handle-splitting}

Now we are going to prove one of the main results of the present paper, which is the topological counterpart of \cite[Main Theorem 1]{BNR2}.

\begin{theorem}[Global Handle Splitting Theorem]\label{thm:main1}
Let $(Z,\O)$ be an embedded cobordism such that $\O\subset Z\times[0,1]$ has codimension 2 or more. Suppose that $\O$, $\S_0$ and $\S_1$ have
no closed connected components. Then there exists a map $F\colon Z\times[0,1]\to [0,1]$, which is homotopic through submersions to the projection onto the second factor, such that $\O$ can be expressed as a union:
\[\O=\O_{-1/2}\cup \O_0\cup \O_{1/2}\cup \O_1\cup \O_{3/2}\cup\dots\cup \O_{n+1/2}\cup \O_{n+1},\]
where $\O_i=\O\cap F^{-1}([(2i+1)/(2n+4),(2i+2)/(2n+4)])$ and
\begin{itemize}
\item[$\bullet$] $\O_{-1/2}$ is a cobordism given by a sequence of index $0$ handle attachments;
\item[$\bullet$] if $i\in\{0,\dots,n\}$, then $\O_i$ is a right product cobordism given by a sequence of elementary index $i$ right product cobordisms;
\item[$\bullet$] if $i+1/2\in \{1,\dots,n+1\}$, then $\O_i$ is a left product cobordism given by a sequence of elementary index $i+1/2$ left product cobordisms;
\item[$\bullet$] $\O_{n+1}$ is a cobordism given by a sequence of index $n+1$ handle attachments.
\end{itemize}
\end{theorem}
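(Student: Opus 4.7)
The strategy is to combine three ingredients established earlier in the paper: Proposition~\ref{prop:4.11}, which provides the connecting paths needed to split interior critical points; the Elementary Handle Splitting Theorem~\ref{thm:embedded}, which splits one such critical point at a time; and the Global Rearrangement Theorem~\ref{thm:egrt}, which is available in codimension two or more and lets us freely reorder the critical points subject to (A1)--(A2). Starting from an arbitrary nondegenerate projection $F$, first I would apply Theorem~\ref{thm:egrt} to put $f = F|_\Omega$ into technically still acceptable form, and then invoke Proposition~\ref{prop:4.11} to ensure that every interior critical point $z$ of $f$ of index $k \in \{1,\ldots,n\}$ admits a smooth path $\gamma_z \subset f^{-1}(f(z))$ joining $z$ to $Y$. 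The no-closed-component hypothesis on $\Omega$, $\Sigma_0$, $\Sigma_1$ is used precisely here.

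Since the critical points are isolated, the paths $\gamma_z$ and associated tubular neighbourhoods $W_z \subset Z \times [0,1]$ can be chosen to be mutually disjoint and to miss all other critical points. I would then iteratively apply Theorem~\ref{thm:embedded} inside each $W_z$, one critical point at a time. Each application trades an interior critical point of index $k$ for a pair $(z^s, z^u)$ of boundary critical points of the same index $k$, one stable and one unstable, without disturbing either the rest of $F$ or the connecting paths of the remaining interior critical points. After finitely many steps the only interior critical points of $f$ are of index $0$ and $n+1$, and all other critical points are boundary stable or boundary unstable.

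A second application of Theorem~\ref{thm:egrt}, with the admissible configuration $\Xi$ chosen so as to place each critical point in the interior of its designated subinterval of $[0,1]$ (interior index $0$ in $(0,1/(2n+4))$; boundary unstable index $i$ in the interval defining $\Omega_i$ for $i = 0, \ldots, n$; boundary stable index $j$ in the interval defining $\Omega_{j-1/2}$ for $j = 1, \ldots, n+1$; and interior index $n+1$ in $((2n+3)/(2n+4), 1)$), realises the required configuration. Conditions (A1) and (A2) are satisfied since within each index boundary stable critical points precede boundary unstable ones, and there are no other same-index conflicts (no boundary stable index $0$ nor boundary unstable index $n+1$). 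Proposition~\ref{prop:productcobordism} then identifies each $\Omega_i$ as the asserted right product, left product, or handle attachment cobordism, and the homotopy through submersions is inherited from the three main ingredients, each of which produces functions homotopic through submersions to its input.

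The main obstacle is the iterated application of Theorem~\ref{thm:embedded}. Although its statement assumes that $f$ has a single interior critical point, the construction alters $F$ only inside a prescribed neighbourhood of the connecting path $\gamma_z$. The technical task is to verify that this localisation of the argument genuinely supports independent applications inside disjoint neighbourhoods $W_z$, so that the hypothesis may be weakened to a local one and the splitting can be carried out critical-point-by-critical-point without the new boundary critical points produced by earlier steps interfering with later ones.
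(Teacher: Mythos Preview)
Your proposal is correct and follows essentially the same route as the paper: rearrange into technically still acceptable form via Theorem~\ref{thm:egrt}, apply Proposition~\ref{prop:4.11} to obtain the connecting paths, split the interior critical points of index $1,\dots,n$ by localised applications of Theorem~\ref{thm:embedded}, and then read off the decomposition into the pieces $\O_i$.

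The only substantive difference is organisational. The paper performs a more specific initial rearrangement, placing boundary stable, interior, and boundary unstable critical points of index $k$ at the prescribed levels $(3k+1)/(3n+6)$, $(3k+2)/(3n+6)$, $(3k+3)/(3n+6)$ respectively, with the intention that after splitting the critical points already sit in the desired order and one can immediately choose the separating values $f_0<f_1<\dots<f_{2n+4}$. You instead perform a generic first rearrangement and then an explicit second pass through Theorem~\ref{thm:egrt} after splitting. Your version is arguably tidier: once several interior critical points of the same index $k$ have been spread out (as Proposition~\ref{prop:4.11} does) and split, the resulting pairs $(z_i^s,z_i^u)$ may interleave as $z_1^s<z_1^u<z_2^s<z_2^u<\cdots$, so some final reordering is needed in any case, and your second application of Theorem~\ref{thm:egrt} makes this step explicit. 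Your identification of the locality of Theorem~\ref{thm:embedded} as the point requiring care is also apt; the paper invokes it in the same iterated fashion without further comment.
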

\begin{proof}
The proof follows the line of \cite[Theorem 4.18]{BNR1} with the exception that we cannot in general assume that the original underlying Morse
function has only boundary stable critical points.

By the Global Rearrangement Theorem (Theorem~\ref{thm:egrt}) we can rearrange the critical points of $f$ so
that the boundary stable critical points of index $k$ have critical value
$\frac{3k+1}{3n+6}$, interior critical points of index $k$ are on the level $\frac{3k+2}{3n+6}$ and boundary unstable critical points of
index $k$ are on the level $\frac{3k+3}{3n+6}$. We point out that there are no boundary unstable critical points of index $n+1$,
nor boundary stable critical points of index $0$ (see Remark~\ref{remark:dbend-index-shift}).

After such rearrangements, the function $f$ is technically still acceptable. By Proposition~\ref{prop:4.11} we can join each interior critical point
of index $1,\dots,n$ to the boundary by a curve lying in a level set of $f$. Then, by Theorem~\ref{thm:embedded}
we can move these interior critical points to the boundary and split into boundary stable and unstable critical points. After this, the critical points
are organized so that first come (this means that the value of $f$ at the corresponding points is the smallest)
interior index $0$ critical points, then boundary unstable index $0$, then boundary stable index $1$, boundary unstable index $1$ and so on.
Finally we have boundary unstable critical points of index $n$, boundary stable critical points of index $n+1$ and interior critical points of
index $n+1$.

We can now choose noncritical values of $f$, $0=f_0<f_1<f_2<\dots<f_{2n+4}=1$ in such a way that $f^{-1}[0,f_1]$ contains only
the interior critical points of index $0$, $f^{-1}[f_{2i+1},f_{2i+2}]$ contains the boundary unstable critical points of index $i$ for $i=0,\dots, n$, $f^{-1}[f_{2i},f_{2i+1}]$ contains the boundary stable critical points of index $i$ for $i=1,\dots,n+1$ and $f^{-1}[f_{2n+3},f_{2n+4}]$ contains only the interior critical points of index $n+1$. We define $\O_i :=f^{-1}[f_{2i+1,2i+2}]$ (we can rescale the function $F$ so that $f_{j}=\frac{j}{2n+4}$).
\end{proof}

In the codimension one case, the situation is much more complicated. First, the rearrangement theorem does not work in general.  Since we are not
allowed to switch the position of critical points of the same index, the method of proof used for Proposition~\ref{prop:4.11} for the interval $[a,c]$
does not work. For the moment we can prove the following result.

\begin{theorem}\label{prop:codim-one-splitting}
Let $(Z,\O)$ be a codimension one embedded cobordism, where $\O$, $\S_0$ and $\S_1$ have no closed connected components.
Then the cobordism $(Z,\O)$
can be split into cobordisms $\O_0,\O_1,\dots,\O_M$ for some $M>0$, where $\O_0$ contains only critical points of index $0$ and $1$ (of all possible types),
$\O_M$ has only critical points of index $n$ and $n+1$ and for $i=1,\dots,M-1$, the cobordism $\O_i$ is either a left product or a right product cobordism, and the index of critical points in $\O_i$ is less than or equal to the index of the critical points in $\O_j$ whenever $1<i <j<M$.
\end{theorem}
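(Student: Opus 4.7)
The plan is to follow the same strategy as Theorem~\ref{thm:main1} but substituting the weaker codimension one tools. The key observation is that while in codimension one we can neither split interior critical points of the extremal indices $0$, $1$, $n$, $n+1$, nor freely reorder boundary critical points of the same index, we can still split every other interior critical point and then partition the resulting cobordism at a generous set of non-critical levels.

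First I would apply Proposition~\ref{prop:codim1-4.11}, which simultaneously rearranges $f=F|_\O$ so that critical points are weakly ordered by index (via the codimension one rearrangement Theorem~\ref{prop:egrt1}) and guarantees that every interior critical point of index $k\in\{2,\dots,n-1\}$ can be joined to $Y$ by a curve lying entirely in its level set. Next I would apply the Elementary Handle Splitting Theorem~\ref{thm:embedded} to each such interior critical point in turn, replacing it with a boundary stable and a boundary unstable critical point of the same index $k$. Since the splitting modifies $F$ only in a small neighbourhood of the critical point and its connecting arc, and produces two new critical points at levels arbitrarily close to the original one, with a little care the index ordering survives all the splittings. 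After finishing, every remaining interior critical point of the new Morse function $g$ has index in $\{0,1,n,n+1\}$, while every critical point of index $k$ with $2\leqslant k\leqslant n-1$ is a boundary critical point (stable or unstable).

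Finally I would choose non-critical values $0=f_0<f_1<\dots<f_M=1$ of $g$ and set $\O_i:=g^{-1}[f_{i-1},f_i]$. Pick $f_1$ so that $\O_0$ contains exactly the critical points of indices $0$ and $1$, and $f_{M-1}$ so that $\O_M$ contains exactly those of indices $n$ and $n+1$; this uses the index ordering. For the intermediate values, traverse the boundary critical points of indices $2,\dots,n-1$ in order of their $g$-values and insert a cut $f_i$ whenever the type switches between boundary stable and boundary unstable. Each intermediate piece $\O_i$ with $1\leqslant i\leqslant M-1$ then contains only boundary critical points of a single type, hence is a left product cobordism or a right product cobordism by Proposition~\ref{prop:productcobordism}, and the weak index monotonicity follows from the initial ordering.

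The main obstacle relative to the codimension $\geqslant 2$ case of Theorem~\ref{thm:main1} is precisely that we cannot force all boundary stable critical points of a given index to precede all boundary unstable critical points of that index, because the relevant emptiness criterion in Proposition~\ref{prop:ems} requires $m\geqslant n+2$, which fails here. This is why the statement has to allow $\O_i$ to share indices with $\O_{i+1}$ and to alternate between left and right product cobordisms within a single index range, rather than grouping all left products before all right products as in the higher codimension version.
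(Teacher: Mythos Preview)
Your proof is correct and follows essentially the same route as the paper: apply Proposition~\ref{prop:codim1-4.11} to rearrange by index and connect the interior critical points of indices $2,\dots,n-1$ to $Y$, then use Theorem~\ref{thm:embedded} to split each such interior critical point, and finally partition by non-critical levels. The only difference is cosmetic, in the final partitioning step: the paper simply places \emph{exactly one} critical point in each intermediate piece $\O_i$ (making each an elementary left or right product cobordism), whereas you group consecutive critical points of the same boundary type and only cut at type switches. Both schemes satisfy the conclusion, but the paper's choice is slightly cleaner since the index monotonicity and the left/right product property are then immediate without further argument.
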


We remark that the statement is vacuous if $\dim\O=2$.

\begin{proof}
By Theorem~\ref{prop:egrt1} we can rearrange critical points so that the critical points of index~$0$ come first, followed by the critical points of index $1$, then those of index $2$, and so on. Recall that we cannot, in general, rearrange critical points so that boundary unstable critical points come after other critical points of the same index.
Notwithstanding, there exist $c,d$ with $0<c<d<1$ such that $f^{-1}[c,d]$ contains critical points of indices between $2$
and $n-1$ and no other. We can already define $\O_0=f^{-1}[0,c]$ and $\O_M=f^{-1}[d,1]$. Now by Proposition~\ref{prop:codim1-4.11} we can connect each interior critical point of index $k$, $k=\{2,\ldots,n-1\}$ to the boundary $Y$ by a curve lying entirely in a level set of $f$. Hence we can apply Theorem~\ref{thm:embedded} to split the critical point. Now we define cobordisms $\O_1,\dots,\O_{M-1}$ by the condition that each contains exactly one critical point of $f$.
\end{proof}

The codimension~1 case, although the most difficult, is also very important in applications; see \cite{BNR2}. We conclude the
section with a rough description of how Theorem~\ref{prop:codim-one-splitting} can replace \cite[Main Theorem 1]{BNR2} in the proof of
\cite[Main Theorem 2]{BNR2}.  The former states that relative algebraic cobordisms are algebraically split.  The latter states that isotopic knots have $S$-equivalent Seifert forms and that $H$-cobordant knots have $H$-equivalent Seifert forms (we refer to \cite{BNR2} for the definitions).

Suppose $Z=S^{2k+1}$ is a sphere and $N_0$, $N_1$ are $(2k-1)$-dimensional
closed oriented submanifolds of $Z$ with Seifert surfaces $\S_0$ and $\S_1$ respectively.  The Seifert forms are defined on the
torsion--free parts of $H_k(\S_0,\Z)$ and $H_k(\S_1,\Z)$. Suppose $N_0$ and $N_1$ are cobordant as submanifolds of $Z \times [0,1]$, for example if the knots are isotopic or concordant. We want to compare the Seifert forms related to $\S_0$ and $\S_1$. To this end, we
find a $(2k+1)$-dimensional manifold
$\O\subset Z\times[0,1]$, such that $\p\O=\S_0\cup Y\cup\S_1$, where $Y$ is the cobordism between $N_0$ and $N_1$. The changes between the Seifert
forms related to $\S_0$ and $\S_1$ can be studied by splitting the cobordism $\O$ into simple pieces, and looking at change corresponding to each piece. If the piece consists of a handle attachment of index $s\neq k,k+1$, then the Seifert form is unchanged; see \cite{Levine:1970-1}. The action occurs in the middle dimensions~$k$ and~$k+1$.
The Seifert form can change, but it is much easier to control the change if the cobordism corresponds to a half-handle attachment
(that is, it corresponds to crossing a boundary critical point) and not a handle attachment (corresponding to crossing an
interior critical point). So to prove \cite[Main Theorem 2]{BNR2}, one wants to move all interior
critical points of index $k,k+1$ to the boundary. By Theorem~\ref{prop:codim-one-splitting}, this is possible if $k\ge 2$, so in any dimension past the classical dimension $k=1$, $Z=S^3$. Of course, in the classical case, the theorem that isotopy of knots implies $S$-equivalence of Seifert matrices, and that Seifert matrices of concordant knots are algebraically concordant, was known long before~\cite{BNR2}.

\bibliographystyle{amsalpha}
\def\MR#1{}
\bibliography{research}

\def\cprime{$'$}
\providecommand{\bysame}{\leavevmode\hbox to3em{\hrulefill}\thinspace}
\providecommand{\MR}{\relax\ifhmode\unskip\space\fi MR }
\providecommand{\MRhref}[2]{%
  \href{http://www.ams.org/mathscinet-getitem?mr=#1}{#2}
}
\providecommand{\href}[2]{#2}
\begin{thebibliography}{BNR12b}

\bibitem[Arn83]{Arn}
V.~I. Arnol{\cprime}d, \emph{Geometrical methods in the theory of ordinary
  differential equations}, Grundlehren der Mathematischen Wissenschaften
  [Fundamental Principles of Mathematical Science], vol. 250, Springer-Verlag,
  New York, 1983, Translated from the Russian by Joseph Sz{\"u}cs, Translation
  edited by Mark Levi. \MR{695786 (84d:58023)}

\bibitem[BH04]{Ban}
A.~Banyaga and D.~Hurtubise, \emph{Lectures on {M}orse homology}, Kluwer Texts
  in the Mathematical Sciences, vol.~29, Kluwer Academic Publishers Group,
  Dordrecht, 2004. \MR{2145196 (2006i:58016)}

\bibitem[Blo12]{Blo}
J.~M. Bloom, \emph{The combinatorics of {M}orse theory with boundary},
  ar{X}iv:1212.6467, 2012.

\bibitem[BNR12a]{BNR2}
M.~Borodzik, A.~N\'emethi, and A.~Ranicki, \emph{Codimension 2 embeddings,
  algebraic surgery and {S}eifert forms}, arXiv:1211.5964, 2012.

\bibitem[BNR12b]{BNR1}
\bysame, \emph{{M}orse theory for manifolds with boundary}, ar{X}iv:1207.3066,
  2012.

\bibitem[Bra74]{Braess}
D.~Braess, \emph{Morse-{T}heorie f\"ur berandete {M}annigfaltigkeiten}, Math.
  Ann. \textbf{208} (1974), 133--148. \MR{0348790 (50 \#1285)}

\bibitem[GS99]{GS}
R.~E. Gompf and A.~I. Stipsicz, \emph{{$4$}-manifolds and {K}irby calculus},
  Graduate Studies in Mathematics, vol.~20, American Mathematical Society,
  Providence, RI, 1999. \MR{1707327 (2000h:57038)}

\bibitem[Haj81]{Hajduk}
B.~Hajduk, \emph{Minimal {$m$}-functions}, Fund. Math. \textbf{111} (1981),
  no.~3, 179--200. \MR{611759 (83k:57028)}

\bibitem[Hud70]{Hudson}
J.~F.~P. Hudson, \emph{Concordance, isotopy, and diffeotopy}, Ann. of Math. (2)
  \textbf{91} (1970), 425--448. \MR{0259920 (41 \#4549)}

\bibitem[KM07]{KM}
P.~Kronheimer and T.~Mrowka, \emph{Monopoles and three-manifolds}, New
  Mathematical Monographs, vol.~10, Cambridge University Press, Cambridge,
  2007. \MR{2388043 (2009f:57049)}

\bibitem[Lau11]{Lau}
F.~Laudenbach, \emph{A {M}orse complex on manifolds with boundary}, Geom.
  Dedicata \textbf{153} (2011), 47--57. \MR{2819662}

\bibitem[Lev70]{Levine:1970-1}
J.~P. Levine, \emph{An algebraic classification of some knots of codimension
  two}, Comment. Math. Helv. \textbf{45} (1970), 185--198. \MR{42 #1133}

\bibitem[LOT11]{Lipshitz}
R.~Lipshitz, P.~Ozsv{\'a}th, and D.~Thurston, \emph{Tour of bordered {F}loer
  theory}, Proc. Natl. Acad. Sci. USA \textbf{108} (2011), no.~20, 8085--8092.
  \MR{2806643 (2012e:57027)}

\bibitem[Mil63]{Mi1}
J.~W. Milnor, \emph{Morse theory}, Princeton University Press, Princeton, N.J.,
  1963, Based on lecture notes by M. Spivak and R. Wells. Annals of Mathematics
  Studies, No. 51. \MR{29 #634}

\bibitem[Mil65]{Mi2}
\bysame, \emph{Lectures on the $h$-cobordism theorem}, Princeton University
  Press, Princeton, N.J., 1965, Notes by L. Siebenmann and J. Sondow. \MR{32
  #8352}

\bibitem[Per75]{Pe}
B.~Perron, \emph{Pseudo-isotopies de plongements en codimension {$2$}}, Bull.
  Soc. Math. France \textbf{103} (1975), no.~3, 289--339. \MR{0394701 (52
  \#15500)}

\bibitem[Rou70]{Ro}
C.~P. Rourke, \emph{Embedded handle theory, concordance and isotopy}, Topology
  of {M}anifolds ({P}roc. {I}nst., {U}niv. of {G}eorgia, {A}thens, {G}a.,
  1969), Markham, Chicago, Ill., 1970, pp.~431--438. \MR{0279816 (43 \#5537)}

\bibitem[Sha88]{Sh}
R.~W. Sharpe, \emph{Total absolute curvature and embedded {M}orse numbers}, J.
  Differential Geom. \textbf{28} (1988), no.~1, 59--92. \MR{950555 (89j:57029)}

\end{thebibliography}

\end{document}